\documentclass[11pt,a4paper]{article}

\usepackage{geometry}
\usepackage{amsmath,amssymb,amsthm,mathtools,bm}
\usepackage{enumitem}
\usepackage{microtype}
\usepackage[colorlinks=true,linkcolor=blue,citecolor=blue,urlcolor=blue]{hyperref} 

\usepackage{tikz,bm}

\usetikzlibrary{shapes.geometric, arrows.meta, positioning, calc} 

\newtheorem{theorem}{Theorem}[section]

\newtheorem{lemma}[theorem]{Lemma}
\newtheorem{conjecture}[theorem]{Conjecture}
\newtheorem{problem}[theorem]{Problem}
\newtheorem{claim}[theorem]{Claim}

\theoremstyle{definition}
\newtheorem{definition}[theorem]{Definition}

\newtheorem{remark}{Remark}
\theoremstyle{plain}

\newcommand{\Aut}{\operatorname{Aut}}

\newcommand{\eps}{\varepsilon}

\newcommand{\Ksharp}{\kappa_{F}}

\renewcommand{\leq}{\leqslant}
\renewcommand{\le}{\leqslant}
\renewcommand{\geq}{\geqslant}
\renewcommand{\ge}{\geqslant}

\title{An edge-spectral supersaturation of Mubayi's theorem\\ for color-critical graphs}

\author{Hongzhang Chen\thanks{School of Mathematics and Statistics, Gansu Center for Applied Mathematics, Lanzhou University, Lanzhou, Gansu, 730000, China. Email: \url{mnhzchern@gmail.com}.} 
\and Yongtao Li\thanks{Corresponding author. 
Yau Mathematical Sciences Center (YMSC), Tsinghua University, Beijing, 100084, China. Email: \url{ytli0921@hnu.edu.cn}.} 
}

\date{}

\begin{document}
\maketitle

\vspace{-7mm}
\begin{abstract}
The supersaturation problem asks how many copies of a fixed graph $F$ are forced to appear in a host graph once it passes the Tur\'an threshold.  
A celebrated theorem of Mubayi states that for any color-critical graph $F$ with chromatic number $\chi (F)=r+1\ge 3$, there exists a constant $\delta_F >0$ such that if $n$ is sufficiently large and $1\le q \le \delta_F n$, then every $n$-vertex graph $G$ with $e(G)\ge e(T_{n,r}) +q$ contains at least $q\, c(n,F)$ copies of $F$, 
where $c(n,F)$ is the minimum number of copies of $F$ 
created by adding one edge inside a part of the Tur\'an graph $T_{n,r}$. 
Writing $f=|F|$, Mubayi's estimate gives $c(n,F)=\alpha_F n^{f-2}+O_F(n^{f-3})$ for a
constant $\alpha_F>0$. 

We study the supersaturation problem in its edge-spectral form. Let $\lambda(G)$ be the adjacency spectral radius of $G$. Nikiforov proved that every $K_{r+1}$-free graph $G$ with $m$ edges satisfies $\lambda (G)\le \sqrt{(1\!-\!1/r )2m}$. Recently, 
 Li, Liu and Zhang proved the same bound for every $F$-free graph $G$, where $F$ is any color-critical graph with $\chi(F)=r+1\ge4$, with equality only for regular complete $r$-partite graphs. It is then natural to ask how many copies of
$F$ are forced once $\lambda (G)$ exceeds this threshold. Fang, Lin and Zhai answered
this at the threshold itself, and conjectured that for any fixed $C>0$, the condition $\lambda (G)\ge \sqrt{(1\!-\!{1}/{r})2m} +C$  forces $\Omega\!\left(m^{(f-1)/2}\right)$ copies. 

In this paper, we answer this question with the best possible constant. 
Building on the proof framework of Fang, Lin and Zhai, 
we prove that for every
color-critical graph $F$ with $\chi(F)=r+1\ge4$, there exists $\delta_F>0$ such that if $m$
is sufficiently large, $0<q\le\delta_F\sqrt m$, and $G$ is an $m$-edge graph with
$\lambda^2(G)\ge 2\left(1-\tfrac1r\right)m+q$, then
\vspace{-2mm}
\[
  N_F(G)\ge\bigl(B_F-o(1)\bigr)\,q\,
  m^{{(f-2)}/{2}},
  \quad \text{where}~~
  B_F:=\tfrac{\alpha_F}{4}
   (\tfrac{2r}{r-1} )^{{f}/{2}}, \vspace{-1mm} 
\] 
and the constant $B_F$ is best possible. 
Our result can be viewed as an edge-spectral counterpart of Mubayi's theorem, 
since it converts the spectral surplus $q$ into a linear number of copies of $F$, and it solves the conjecture of Fang, Lin and Zhai in a stronger form. 
\end{abstract}



\section{Introduction}

The well-known Mantel theorem says that every $n$-vertex graph with no triangle contains at most $\lfloor n^2/4\rfloor$ edges.  
The Tur\'an-type problem asks for the maximum number of edges in a graph with no copy of $F$;  the supersaturation problem asks how many copies of $F$ are forced once the number of edges exceeds the Tur\'{a}n number.  Erd\H{o}s and Rademacher (see \cite{Erdos1964}) extended Mantel's theorem by showing that  
if $e(G)>  \lfloor n^2/4\rfloor$, 
then $G$ contains at least $ \lfloor {n}/{2}\rfloor$ triangles. 
In general,  Erd\H{o}s \cite{Erd1962a}
 showed that there exists a constant $\delta>0$ such that if  $n$ is sufficiently large and $1\le q<\delta n$ is an integer, then
 $e(G)\ge \lfloor n^2/4\rfloor +q$
 forces at least $q\lfloor {n}/{2}\rfloor$ triangles in $G$. 
Furthermore,  
 Erd\H{o}s proposed a conjecture to determine the exact value of $\delta$. 
Finally, 
Lov\'{a}sz and Simonovits \cite{LS1975,LS1983} proved that 
for any positive integer $q < {n}/{2}$,  
if $G$ is an $n$-vertex graph with 
$e(G)\ge \lfloor {n^2}/{4} \rfloor + q$,   
then $G$ contains at least $q \lfloor {n}/{2}\rfloor$ 
triangles. 
The problem of counting triangles  is  referred to as the Erd\H{o}s--Rademacher problem, 
which is regarded as a starting point of supersaturation in graph theory; see \cite{Rei2016, XK2021,LM2022-Erd-Rad,BC2023,LPS2020}.

A graph is called {\it color-critical} if it contains an
edge whose deletion reduces its chromatic number.
 This family of graphs, which includes cliques and odd cycles, plays a central role in the
development of extremal graph theory. 
Let $T_{n,r}$ be the  $n$-vertex complete balanced $r$-partite graph. 
Simonovits \cite{Sim1966} showed that
for every  color-critical  graph $F$ with chromatic number $\chi (F)=r+1 \ge 3$,
if $n$ is sufficiently large and 
 $G$ is an $n$-vertex graph containing no copy of $F$, then
$ e(G) \le e(T_{n,r})$, with equality if and only if $G=T_{n,r}$.
A breakthrough of Mubayi \cite{Mub2010} established a sharp supersaturation result beyond the Simonovits theorem for all color-critical graphs.

\begin{theorem}[Mubayi \cite{Mub2010}] 
\label{thm-Mub2010}
Let  $F$ be a color-critical graph
with $\chi (F)=r+1 \ge 3$. There exists $\delta =\delta (F)>0$ such that
if $n$ is sufficiently large, $1\le q \le \delta n$, and
$G$ is an $n$-vertex graph with
$$ e(G)\ge e(T_{n,r}) +q, $$  
then $G$ contains at least $q\cdot c(n,F) $ copies of $F$, where 
$c(n,F)$ denotes the minimum number of copies of $F$  in a  graph obtained from the Tur\'{a}n graph $T_{n,r}$ by adding one edge. 
\end{theorem}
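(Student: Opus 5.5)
We prove Mubayi's theorem by induction on $q$, deleting edges that lie in many copies of $F$. The key dichotomy is this: either some edge of $G$ lies in at least $c(n,F)$ copies of $F$, or every edge lies in fewer. In the first case we delete that edge. Then $e(G-e)\ge e(T_{n,r})+q-1$, and induction (with base case $q=0$ trivial) gives $(q-1)c(n,F)$ copies avoiding $e$. Together with the $c(n,F)$ copies through $e$, this yields $q\,c(n,F)$ in total. It therefore suffices to show the second case is impossible when $1\le q\le \delta n$ and $n$ is large.

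So assume every edge lies in fewer than $c(n,F)=\alpha_F n^{f-2}+O(n^{f-3})$ copies of $F$. First we reduce to large minimum degree by the standard vertex-deletion argument. If some vertex has degree below $(1-1/r-\varepsilon)n$, delete it. Iterating, the edge surplus over $e(T_{n',r})$ grows by roughly $\varepsilon n'$ per step. So after $o(n)$ steps we would either exceed what the total count of $F$ allows or reach a subgraph on $n'\ge (1-\eta)n$ vertices with minimum degree at least $(1-1/r-\varepsilon)n'$ and surplus $q'\ge q$. The total count of $F$ is controlled by the removal/supersaturation lemma: surplus $\Omega(n^2)$ forces $\Omega(n^f)$ copies, which contradicts each edge lying in $O(n^{f-2})$ copies. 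The per-edge condition is inherited by subgraphs, so we may pass to this subgraph.

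Next we apply the Erdős–Simonovits stability theorem. Since $G$ has at least $e(T_{n,r})$ edges but few copies of $F$ per edge (hence $o(n^f)$ copies overall), the removal lemma and stability give a partition $V_1,\dots,V_r$ with $|V_i|=n/r+o(n)$ and $o(n^2)$ edges inside parts. Take the partition maximizing the number of crossing edges. Then each vertex has at most as many neighbours in its own part as in any other, and by the minimum-degree condition each vertex has $\Omega(n)$ non-neighbours only in a controlled way. Because $e(G)>e(T_{n,r})\ge$ the number of crossing edges of any $r$-partite graph on these parts, there is at least one edge $xy$ inside some part $V_1$.

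The heart of the argument is to count copies of $F$ through $xy$ and show there are at least $c(n,F)$ of them, which is the desired contradiction. Since $F$ is color-critical, $F-e$ is $r$-colorable with the endpoints of $e$ in the same class. So copies of $F$ containing $xy$ correspond to embeddings of $F-e$ that place the remaining $f-2$ vertices into the common neighbourhoods within the parts. In $T_{n,r}$ plus an edge this count is exactly the $c(n,F)$ count. In $G$ we must compare, and here a deficiency argument is needed. Let $D$ be the total number of missing crossing edges. Then $e(G)\ge e(T_{n,r})+q$ forces the number of inside edges to be at least $q+D-O(\text{imbalance}^2)$. Each missing crossing edge destroys at most $O(n^{f-3})$ of the embeddings through $xy$, or through some inside edge. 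Summing over all inside edges, the total count of $F$ is at least $(\#\text{inside edges})\cdot c(n,F)-O(D\,n^{f-3}\cdot\#\text{inside edges})-O(\dots)$. Combined with the bound $\#\text{inside edges}\le \delta n$ (the inside edges are few because $q\le\delta n$ and $D$ is small relative to $n$, from the degree and max-cut conditions), we get that the average inside edge lies in at least $c(n,F)$ copies. This contradicts the assumption.

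I expect this last accounting to be the main obstacle. We must show that the losses from missing crossing edges and from part-size imbalance are outweighed by the gain from the extra inside edges, with the exact constant $c(n,F)$ rather than $(1-o(1))c(n,F)$. The idea I would try first is a weighting in which each inside edge is charged only for missing crossing edges incident to its endpoints' neighbourhoods. The max-cut and minimum-degree conditions give that each vertex is incident to $o(n)$ missing edges. Then the smallness $\delta$ of $q$ lets the linear surplus term dominate the quadratic error terms.
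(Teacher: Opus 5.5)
The paper only quotes this theorem from Mubayi and gives no proof of it, so I have judged your proposal on its own. Its central reduction is false.

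\textbf{The second branch is not contradictory.} The first branch of your dichotomy is fine. The second branch (every edge lies in fewer than $c(n,F)$ copies) can occur, even in exactly the regime your argument ends in. Take $F=K_3$ and $n$ even. Start from $K_{n/2,n/2}$ with parts $A,B$, add two edges $uv_1,uv_2$ inside $A$, and delete one cross edge $ub_0$. Then:
\begin{itemize}
\item $e(G)=e(T_{n,2})+1$, the minimum degree is $n/2-1$, and $\{A,B\}$ is the max-cut partition;
\item $G$ has only $n-2$ triangles;
\item each inside edge lies in $n/2-1$ triangles (its common neighbourhood is $B\setminus\{b_0\}$), and each cross edge lies in at most $2$;
\item but $c(n,K_3)=n/2$.
\end{itemize}
The same construction inside $T_{n,r}$ does this for $K_{r+1}$. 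Here $N_F(G)\ge q\,c(n,F)$ holds only because two inside edges each carry almost $c(n,F)$ copies. No single edge can be peeled off, and your final ``average inside edge'' conclusion is false: the average here is $n/2-1$.

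Two further steps in that branch also fail:
\begin{itemize}
\item In the minimum-degree reduction, $\Omega(n^f)$ copies do not contradict every edge lying in $O(n^{f-2})$ copies, because there are $\Theta(n^2)$ edges. For example, in $T_{n,r+1}$ every edge lies in about $(n/(r+1))^{r-1}<c(n,K_{r+1})$ copies of $K_{r+1}$.
\item After deleting vertices, the inherited condition still refers to $c(n,F)$, not the smaller $c(n',F)$. So it is the wrong invariant to carry.
\end{itemize}

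\textbf{The fix.} Drop the heavy-edge induction and make the total count the whole argument. This is also how the paper structures its spectral analogue (Claims~\ref{cl-W-sub-S2} and~\ref{clm:sharp-local-count}).
\begin{itemize}
\item Assume $N_F(G)<q\,c(n,F)=O(\delta n^{f-1})$. This, not the per-edge bound, is what the Erd\H{o}s--Simonovits and stability steps contradict. It is also what you carry through vertex deletion, by comparing $q'c(n',F)$ with $q\,c(n,F)$.
\item A vertex of internal degree at least $\varepsilon n$ already yields $\Omega(\varepsilon n^{f-1})\gg q\,c(n,F)$ copies. So all internal degrees are below $\varepsilon n$. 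Then every inside edge lies in $\Omega(c(n,F))$ copies, which gives $p=O(q)$.
\item For each inside edge $e$, count the copies whose only inside edge is $e$. These families are disjoint, so $N_F(G)\ge p\,c(\mathbf{n},F)-L$.
\item The correct edge balance is $p\ge q+D+I$, where $I=e(T_{n,r})-\sum_{i<j}|V_i||V_j|\ge 0$. Your ``$-O(\text{imbalance}^2)$'' has the wrong sign. This deficit $I$ is exactly what pays for $c(\mathbf{n},F)<c(n,F)$ when the parts are unbalanced.
\item Each missing cross edge $xy$ destroys $O\bigl((d_{\mathrm{in}}(x)+d_{\mathrm{in}}(y))\,n^{f-3}+p\,n^{f-4}\bigr)=O(\varepsilon n^{f-2})$ of the counted copies, a small fraction of $c(n,F)$.
\end{itemize}
So for small $\varepsilon,\delta$ the total loss is at most $(D+I)\,c(n,F)$, and $N_F(G)\ge q\,c(n,F)$ follows directly, with no single edge required to reach $c(n,F)$.
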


We refer the interested reader to \cite{PY2017, KMP2020, MY2025, 
Rei2016, MWZ2026} for related developments.

\subsection{Spectral supersaturation}

The \textit{spectral radius} $\lambda(G)$ of $G$ is the maximum modulus of the eigenvalues of its adjacency matrix $A(G)$.  
The spectral Tur\'an problem 
asks for the maximum spectral radius of a graph with no copy of a given forbidden subgraph. 
For instance, an old theorem of Wilf \cite{Wil1986} states that  every $K_{r+1}$-free graph $G$ on $n$ vertices satisfies $\lambda(G) \le \left(1-\frac{1}{r} \right)n$. 
Another well-known theorem of Nikiforov \cite{Nikiforov} shows that every $K_{r+1}$-free graph $G$ with $m$ edges satisfies $\lambda^2(G) \le \left(1-\frac{1}{r} \right)2m$, with equality (see  \cite{Nik2006, Nik2009}) if and only if $G$ is a complete bipartite graph when $r=2$, or a regular complete $r$-partite graph when $r\ge 3$; see \cite{KN2014,CY2026} for related extensions.  
Both of these spectral bounds imply the Tur\'{a}n bound by invoking $\lambda (G)\ge \frac{2m}{n}$.  
Recently, Li, Liu and Zhang~\cite{LLZ-edge-spectral}
extended the Erd\H{o}s--Stone--Simonovits  theorem to the edge-spectral setting:
if  $F$ is a fixed graph with $\chi(F)=r+1\ge 3$, then every $m$-edge
$F$-free graph $G$ satisfies $\lambda^2(G)\le \left(1-\frac1r+o(1)\right)2m$. 
Furthermore, Li, Liu and Zhang \cite{LLZ-edge-color-critical} extended Nikiforov's theorem to color-critical graphs, 
proving that if $F$ is a color-critical graph with $\chi (F)=r+1\ge 4$, then every $F$-free graph $G$ with sufficiently large size $m$ satisfies \begin{equation} \label{eq-LLZ}
\lambda^2(G)\le \Big( 1-\frac{1}{r} \Big)2m,
\end{equation}
with equality if and only if $G$ is a regular complete $r$-partite graph. In addition, 
Li, Liu and Zhang \cite{LLZ-edge-color-critical} established Tur\'{a}n-type results for color-critical graphs $F$ with $\chi (F)=3$ and some classical bipartite graphs, in which the spectral extremal graphs are often nearly split graphs. There is now a substantial literature on edge-spectral Tur\'{a}n-type results; we refer to \cite{ZLS2021,LZS2024,LLZ2025,LZZ2024,LLLY2026,ZLL2026}.

\smallskip 
Following the line of classical supersaturation, it is natural to investigate the spectral supersaturation, which asks how
many copies of $F$ must appear in a host graph $G$ when  $\lambda(G)$ exceeds the maximum possible spectral radius of an $F$-free graph.  
There are two ways to measure how far 
$\lambda (G)$ lies above the Tur\'an threshold. In the vertex-spectral version, 
one asks how many copies of $F$ are forced once
$\lambda(G)$ passes $(1-\frac1r )n$; 
see, e.g., \cite{LFP2024, FLLM2025, FLL2025}.  
 In the edge-spectral version, 
one asks the same question once $\lambda(G)$ passes $\sqrt{(1\!-\! 1/ r)2m}$.
The edge-spectral version is more general since  
the spectral scale $\sqrt m$ (in the case $r=2$) applies to sparse graphs of any edge density, and the assumption $ \lambda (G) >\sqrt{(1\!-\!1/r)2m}$ is weaker than $\lambda (G) > (1- \frac{1}{r})n$. In the edge-spectral version, 
the order $n$ is not given in advance, 
so the structure of $G$ must be recovered by edge-spectral arguments, via the edge-spectral forms of supersaturation and stability.

\begin{table}[htbp]
\centering
\renewcommand{\arraystretch}{1.25}
\begin{tabular}{@{}l p{0.22\textwidth} p{0.23\textwidth} p{0.2\textwidth}@{}}
\hline
 & {Classical version} &{Vertex-spectral} 
   &{Edge-spectral} \\
\hline
Fixed parameter
 & $n$ vertices
 & $n$ vertices
 &  $m$ edges \\[2pt]
Extremal bound
 & Tur\'an \cite{Turan1941} 
 \newline $e(G)\le (1-\frac{1}{r})\frac{n^2}{2}$
 & Wilf \cite{Wil1986} \newline 
 $\lambda(G)\le(1-\tfrac1r)n$
 &  Nikiforov~\cite{Nikiforov}  \newline $\lambda^2(G)\le (1-\tfrac1r)2m$ \\[2pt]
  Color-critical case & Simonovits~\cite{Sim1966}
 &  Nikiforov \cite{Nik2009EJC} 
 & Li--Liu--Zhang~\cite{LLZ-edge-color-critical} \\[2pt]
Supersaturation
 &   Mubayi~\cite{Mub2010} 
  & Fang--Li--Lin--Ma \cite{FLLM2025} 
 & Current paper  \\ 
\hline
\end{tabular}
\caption{Three different lines of supersaturation.}
\label{tab:three-lines}
\end{table}

\smallskip  
The goal of this paper is to investigate the edge-spectral supersaturation problem for color-critical graphs. 
 In 2007, Bollob\'{a}s and Nikiforov \cite{BN2007jctb} proved that
every graph $G$ with spectral radius $\lambda$ contains at least
        $\frac13\lambda \bigl(\lambda^2-m\bigr)$
triangles; see also \cite[Lemma 7]{CFTZ20}. In 2023, 
Ning and Zhai~\cite{NZ2021,NZ2021b} proved that if 
$\lambda(G)>\sqrt m$, then $G$ contains at least $\lfloor \frac{1}{2} (\sqrt m-1)\rfloor$ triangles 
and at least
$\frac1{2000}m^2$ copies of $C_4$.  
Li, Liu and Zhang~\cite{LiLiuZhang26} later sharpened the $C_4$ count, showing that $G$ 
contains at least $\left(\frac{1}{8} - o(1) \right)m^2$ copies of $C_4$, and the constant $\frac{1}{8}$ is best possible. 
Chen, Li and Tang \cite{CLT2026} showed that every graph $G$ contains at least $m(\lambda -\sqrt{m}\,)$ triangles. In addition, 
they \cite{CLT2026} proved that $\lambda (G)> \sqrt{m}$ also forces at least $\left(\frac{1}{8} - o(1) \right)m$ copies of the kite $C_4^+$ (the $4$-cycle with a chord), and the constant $\frac{1}{8}$ is best possible. 
Recently, Li, Lin, Liu and Zhang \cite{LLLZ2026} established edge-spectral supersaturation results for classical bipartite graphs, including $K_{t,t}$ and $C_{2t}$, 
by developing spectral Sidorenko inequalities.

\smallskip 
Using the probabilistic method, Li, Liu and Zhang \cite{LiLiuZhang26} proved that $\lambda^2(G) > (1- \frac{1}{r})2m$ forces $\Omega_r(m^{(r-1)/2})$ copies of the clique $K_{r+1}$ for all $r\ge 2$. For a general color-critical graph $F$, 
by Mubayi's estimate  (Lemma~\ref{lem:Mub2010-c}), there exists a constant $\alpha_F>0$ depending on $F$ such that 
\begin{equation*} \label{eq-alpha}
  c(n,F)=\alpha_F \, n^{f-2}+O_F(n^{f-3}).
\end{equation*}  
A recent breakthrough due to   
Fang, Lin and Zhai \cite{FLZ} establishes the supersaturation result  beyond Li--Liu--Zhang's bound (\ref{eq-LLZ}), proving  that for any color-critical graph $F$ with order $f$ and $\chi (F)=r+1\ge 4$, if $m$ is sufficiently large and $G$ is an $m$-edge graph satisfying $\lambda^2(G) \ge (1- \frac{1}{r})2m$, then
 \begin{equation*} \label{eq-FLZ}
  N_F(G) \ge \left( \alpha_F \Big(\frac{2r}{r-1} \Big)^{\!\! \frac{f-2}{2}} - o(1) \right)m^{\frac{f-2}{2}}, 
  \end{equation*} 
unless $G$ is a regular complete $r$-partite graph. 
This bound on $N_F(G)$ is asymptotically tight. 
Furthermore, 
Fang, Lin and Zhai \cite{FLZ} proposed the following conjecture for the regime where the spectral radius exceeds the threshold by a constant additive gap. 

\begin{conjecture}[Fang--Lin--Zhai \cite{FLZ}] 
\label{conj-FLZ}
Let $ F $ be a color-critical graph with $ |F| = f $
 and $ \chi(F) = r + 1 \geq 4 $. For any fixed positive constant $ C $ and sufficiently large $ m $, 
$$ \lambda(G) \geq \sqrt{\Big(1 - \frac{1}{r}\Big)2m} +C \quad \Rightarrow \quad 
N_F(G) = \Omega\! \left(m^{\!\frac{f-1}{2}}\right). $$   
\end{conjecture}

A recent result of Chen, Li and Tang \cite{CLT2026} shows that for any real $C>0$, if $G$ is an $m$-edge graph with $\lambda (G)\ge \sqrt{m} +C$, then $G$ contains more than $C\, m$ triangles, and this bound is asymptotically tight 
as witnessed by the split graphs. 
This is an edge-spectral version of the Lov\'{a}sz--Simonovits theorem, and it establishes the analogous statement of Conjecture \ref{conj-FLZ} in the case of triangles. 

\subsection{Main results}

In this paper, we establish an 
edge-spectral counterpart of Mubayi's result in Theorem \ref{thm-Mub2010}. 

\begin{theorem} \label{thm:squared-gap}
Let $F$ be a color-critical graph of order $f$ with $\chi (F)=r+1\ge 4$. There exists $\delta_F>0$ such that if $m$ is sufficiently large, 
$0<q\le \delta_F \sqrt m$, and $G$ is an 
$m$-edge graph with 
\[
  \lambda^2(G) \ge \Bigl(1-\frac1r\Bigr) 2m+q, 
\]
then  $  N_F (G)\ge
  \big(B_{F}- o(1) \big)\,q\,m^{(f-2)/2}$, 
where $B_{F}:=\frac{\alpha_F}{4}\bigl(\frac{2r}{r-1}\bigr)^{f/2}$
is best possible. 
\end{theorem}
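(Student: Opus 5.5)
Write $\lambda=\lambda(G)$ and $x$ for the unit Perron vector. The plan follows the Fang--Lin--Zhai framework: first reduce to a near-extremal structure, then count copies of $F$ edge by edge inside the parts.

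\emph{Step 1 (structure).} Assume $N_F(G)< B_F q\, m^{(f-2)/2}\le B_F\delta_F m^{(f-1)/2}$, i.e.\ $G$ has $o(m^{f/2})$ copies of $F$. By the edge-spectral removal lemma combined with the Li--Liu--Zhang stability, $G$ is then $\varepsilon m$-close to a complete $r$-partite graph. I would then perform the standard cleaning. Vertices with small Perron entry are deleted; their total contribution to $\lambda^2\le 2m$-type bounds is negligible. This yields a partition $V_1,\dots,V_r$ of the essential vertex set with the following properties: all parts have size $(1+o(1))n_0/r$, where $n_0^2(1-1/r)/2\approx m$; the graph between parts is almost complete; and every vertex has $o(n_0)$ neighbours inside its own part. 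Set $e_{\mathrm{in}}=\sum_i e(G[V_i])$.

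\emph{Step 2 (spectral surplus $\Rightarrow$ interior edges).} Write $\lambda^2=\sum_v d_{\mathrm{w}}(v)x_v\lambda$ via $\lambda^2 x_v=\sum_{u\sim v}\sum_{w\sim u}x_w$. Compare with the complete $r$-partite graph $H$ spanned by the crossing edges. By the Nikiforov/Li--Liu--Zhang bound applied to the $r$-partite part, $\lambda^2(H)\le 2(1-1/r)e(H)$. Adding the interior edges raises $\lambda^2$ by at most about $2\lambda\sum_{uv\in \mathrm{int}} x_ux_v$ plus lower-order terms. Since $x_v\approx 1/\sqrt{n_0}$ on the essential vertices and $\lambda\approx (1-1/r)n_0$, each interior edge contributes roughly $2(1-1/r)$, while each missing crossing edge costs about the same. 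One therefore expects
\[
q\le \lambda^2-2(1-\tfrac1r)m\lesssim (1+o(1))\bigl(e_{\mathrm{in}}\cdot \text{const}\bigr)-\Theta(\text{missing}),
\]
and I aim for the precise statement $e_{\mathrm{in}}\ge (1-o(1))\,q\,\frac{r}{4(r-1)}\cdot\gamma$. Here the constant $\gamma$ must be tracked so that the final answer becomes $B_F$. The target normalization is $B_F m^{(f-2)/2}q=\alpha_F n_0^{f-2}\cdot(\text{\#interior edges needed})$, using $n_0=\sqrt{2rm/(r-1)}$. This gives $\alpha_F n_0^{f-2}=\alpha_F(2r/(r-1))^{(f-2)/2}m^{(f-2)/2}$. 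Hence we need $e_{\mathrm{in}}\gtrsim \frac{q}{4}\cdot\frac{2r}{r-1}=\frac{rq}{2(r-1)}$. This is consistent with $\lambda^2$ growing by about $2\lambda/n_0\cdot e_{\mathrm{in}}\cdot(\ldots)$, and I would verify it by a careful second-order expansion in which the Perron vector is nearly constant. The deviation terms must be controlled using the degree bounds from Step 1.

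\emph{Step 3 (counting).} Each interior edge $uv\in V_i$ lies in at least $(\alpha_F-o(1))n_0^{f-2}$ copies of $F$, by Lemma~\ref{lem:Mub2010-c} (Mubayi's estimate) applied in the nearly complete $r$-partite host. Here we use that $u,v$ have almost full degree into the other parts; vertices of low crossing degree must be handled separately, or shown not to exist via Perron-entry arguments. Copies are counted with bounded overcounting by choosing a critical edge, and multiple interior edges in one copy contribute only $O(e_{\mathrm{in}}^2 n_0^{f-4})=o(q n_0^{f-2})$ since $q\le\delta_F\sqrt m$. Combining with Step 2 gives the bound. Sharpness comes from taking a balanced complete $r$-partite graph with about $rq/(2(r-1))$ edges added inside parts as a matching, with $m$ adjusted accordingly.

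\emph{Main obstacle.} The hard part is Step 2, namely converting the additive surplus $q$ into an exact count of interior edges with the sharp constant. Missing crossing edges and uneven part sizes also lower $\lambda^2$, and these effects must be shown to cost at least as much as they save. The fluctuation of $x$ must be controlled to relative error $o(1)$ per edge. I would first try the identity $\lambda^2-2(1-1/r)m=\sum$ over edges of local quadratic terms in $x$. This should be combined with the Fang--Lin--Zhai estimate at $q=0$ and with monotonicity as interior edges are removed one at a time.
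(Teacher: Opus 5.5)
Your architecture is the paper's: stability, cleaning, a Nikiforov bound on the crossing graph, a per-edge Mubayi count, and the matching construction. (The paper proves the additive-gap version with $C=\sqrt{\tau_r(m)^2+q}-\tau_r(m)=(1+o(1))\,q/(2\tau_r(m))$ and converts back.) However, the two steps that carry the sharp constant are missing, and as sketched they would not go through.

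\emph{Step~1 does not preserve the surplus.} Step~1 must keep the surplus at the scale of $q$. Since $q$ may be $O(1)$, far below the $\varepsilon m$ resolution of stability, an error that is ``negligible'' relative to $m$ tells you nothing. Deleting a vertex $v$ lowers the threshold $2(1-\frac1r)e$ by $2(1-\frac1r)d(v)$, but it can lower $\lambda^2$ by about $2x_v^2\lambda^2\approx 4(1-\frac1r)m\,x_v^2$. So each deletion must satisfy $2m x_v^2\lesssim d(v)$ for the current Perron vector.

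The paper's device is to delete only $\alpha$-light edges and $\beta$-deficient vertices, one at a time. For these, $\Phi=\lambda/\sqrt{e}$ is non-decreasing (Lemma~\ref{lem:admissible}), and Theorem~\ref{thm:edge-spec-ESS} shows that fewer than $\delta m$ edges are removed. Hence the terminal graph $H$ satisfies $\lambda^2(H)\ge\bigl(2(1-\frac1r)+q/m\bigr)e(H)$, so it keeps surplus at least $(1-\delta)q$. The termination condition is used again later: a vertex of degree at most $(1-\frac1r-4\eps_2)n$ would be $\beta$-deficient (Claim~\ref{clm:S-empty}).

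Small internal degree for \emph{every} vertex is not a by-product of cleaning either. The paper derives it from the count: a vertex with $4\eps_1 n$ internal neighbours would create $\Omega(\eps_1 n^{f-1})$ copies, which is too many precisely because $C_0\le\eps_1\gamma/100$ (i.e.\ $\delta_F$ small). You need both facts, so that every vertex misses at most $10\eps_2 n$ cross-edges. Otherwise a class-edge at a vertex with many missing cross-edges can enter $2\sum_{uv}x_ux_v$ with weight close to $2\max_v x_v^2$ while lying in substantially fewer than $c(n,F)$ copies of $F$.

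\emph{Step~2 is left open, and its heuristic has the wrong factor.} Suppose interior edges raised $\lambda^2$ by only $2\lambda\sum_{uv}x_ux_v\approx 2(1-\frac1r)e_{\rm in}$. This would exactly cancel the rise $2(1-\frac1r)e_{\rm in}$ of the threshold and force $q\lesssim 0$, which the matching example refutes. The conversion needs no second-order expansion and no accounting for missing cross-edges. Let $P$ be the set of $p$ class-edges of $H$ and $H^*=H-P$, which is $r$-partite. Theorem~\ref{thm:nikiforov} gives
\[
\lambda(H)=x^TA(H^*)x+2\sum_{uv\in P}x_ux_v\le \tau_r(h-p)+2p\max_v x_v^2 ,
\]
and $\tau_r(h)-\tau_r(h-p)=(1+O(\eps_1)+o(1))\,p/n$. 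Missing cross-edges and unequal parts only make Nikiforov's bound slack, so they never have to be paid for. Thus $\max_v x_v^2\le(1+O(\eps_1))/n$ yields $p\ge(1-O(\theta)-o(1))Cn$, essentially $\frac{rq}{2(r-1)}$.

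This requires an upper bound on the \emph{maximum} Perron entry over all of $V(H)$, not merely ``$x_v\approx n_0^{-1/2}$ on essential vertices''. The paper obtains it (Claims~\ref{clm:perron-almost-constant} and~\ref{clm:clean-case}) only after both exceptional sets are shown empty, which again rests on the gap-preserving regularization.

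\emph{A smaller point in Step~3.} Your overlap term should be compared with $e_{\rm in}n^{f-2}$ rather than $qn^{f-2}$, since you only have a lower bound on $e_{\rm in}$. It must also include pairs of class-edges sharing a vertex. The paper sidesteps this by counting, for each class-edge, only copies in which it is the unique class-edge.
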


For each $r\ge 3$, 
the clique $K_{r+1}$ is color-critical with $\chi(K_{r+1})=r+1\ge4$,
so Theorem~\ref{thm:squared-gap} applies.  
After adding one edge $uv$ inside a part of the Tur\'{a}n graph $T_{n,r}$, every copy of $K_{r+1}$ through $uv$ is obtained by choosing one
vertex from each of the other $r-1$ parts, so 
$c(n,K_{r+1}) =(1+o(1))(n/r)^{r-1}$. Hence $\alpha_{K_{r+1}}=(1/r)^{r-1}$. 
Thus, the clique case of Theorem \ref{thm:squared-gap} sharpens the result of Li, Liu and Zhang~\cite{LiLiuZhang26},
who showed that $\lambda^2(G)>2(1-\tfrac1r)m$ forces $\Omega_r(m^{(r-1)/2})$ copies of
$K_{r+1}$.

Theorem~\ref{thm:squared-gap} 
shows that the count is linear in the spectral
surplus $q$. On the one hand, the constant
$B_F$ is the rate at which the spectral surplus
turns into copies of $F$: each unit of $q$
above the threshold forces
$(B_F-o(1))\,m^{(f-2)/2}$ copies, just as each
extra edge forces $c(n,F)$ extra copies in
Mubayi's theorem. 
On the other hand, the
theorem identifies the asymptotically extremal construction (Section~\ref{sec:sharpness}): the Tur\'an graph with a matching added
inside one part  achieves this bound, because adding a matching is the least efficient way to push the spectral radius above the Tur\'{a}n threshold; see, e.g.,  \cite{FLLM2025}. 
When we place Theorem~\ref{thm:squared-gap} inside the whole range
of the spectral surplus $q=\lambda^2(G)-(1-\frac1r)2m$, our theorem
covers the range $0<q\le\delta_F\sqrt m$, where the minimum number of copies of
$F$ grows linearly  with a sharp constant. At
the other end, once $q\ge\varepsilon m$ for a fixed $\varepsilon>0$, the edge-spectral supersaturation forces
$\Theta(m^{f/2})$ copies of $F$. The intermediate range
$\delta_F\sqrt m\le q\le\varepsilon m$ is not understood. 
We point out that the cut-off at the order $\sqrt m$ is not an artifact of our method: Remark~\ref{rem:fixed-gap-caveat} shows that the count is no longer linear once the gap $C$ is bounded away from zero. A similar phenomenon appears in the vertex-spectral setting, where Fang, Li, Lin and Ma \cite{FLLM2025} showed that the surplus must stay below order $\sqrt n$ for the count to remain linear. 

\smallskip 
Our starting point is the work of Fang,
Lin and Zhai~\cite{FLZ}, who settled the
threshold case $\lambda(G) \!>\! \sqrt{(1\!-\!1/r)2m}$ 
and proposed 
Conjecture~\ref{conj-FLZ}. 
We build on their framework and resolve their conjecture
with the sharp constant by showing the following stronger variant of Theorem \ref{thm:squared-gap}.

\begin{theorem}\label{thm:linear-small-gap}
Let $F$ be a color-critical graph with  order $f$ and $\chi(F)=r+1\ge4$, and let 
$ \Ksharp:=\alpha_F (\frac{2r}{r-1} )^{(f-1)/2}$.
For every $\eta>0$, there exists $C_0=C_0(F,\eta)>0$ such that for every real number $C$ with $0<C\le C_0$ and sufficiently large $m$, if $G$ is an $m$-edge graph with  
$$  \lambda(G)\ge \sqrt{\Big(1- \frac{1}{r}\Big)2m}+C, $$
then
\[
   N_F (G)\ge
  \big(1 - \eta \big) \Ksharp \,C m^{\frac{f-1}{2}}.
\]
Moreover, the coefficient $\Ksharp$ is best possible in the following sense: for every $\eta>0$, there is $C_1=C_1(F,\eta)>0$ such that  for every $0<C\le C_1$, there are arbitrarily large $m$ and $m$-edge graphs $G$ satisfying $\lambda(G)\ge\sqrt{\left( 1 \!-\! {1}/{r}\right)2m} +C$ and
$  N_F (G)\le (1+\eta)\Ksharp\, C m^{(f-1)/2}$.  
\end{theorem}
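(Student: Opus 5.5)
The plan is to deduce the lower bound from Theorem~\ref{thm:squared-gap} by squaring, and to prove the sharpness part with an explicit construction. Put $\beta:=\sqrt{2(1-1/r)}$. If $\lambda(G)\ge \beta\sqrt m+C$, then
\[
\lambda^2(G)\ge \Big(1-\frac1r\Big)2m+2C\beta\sqrt m+C^2 .
\]
Define $q:=2C\beta\sqrt m$. We need $q\le\delta_F\sqrt m$, which holds as soon as $C\le C_0:=\delta_F/(2\beta)$. Theorem~\ref{thm:squared-gap} then applies and gives
\[
N_F(G)\ge (B_F-o(1))\,2C\beta\, m^{(f-1)/2}.
\]
Next we check the constant. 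We have $B_F\cdot 2\beta=\frac{\alpha_F}{2}\big(\frac{2r}{r-1}\big)^{f/2}\beta$ and $\beta=\sqrt{2(r-1)/r}=2\big(\frac{2r}{r-1}\big)^{-1/2}$. Hence $2\beta B_F=\alpha_F\big(\frac{2r}{r-1}\big)^{(f-1)/2}=\Ksharp$. For $m$ large the $o(1)$ term is below $\eta B_F$, which gives $N_F(G)\ge(1-\eta)\Ksharp C m^{(f-1)/2}$. In this argument $C_0$ does not even need to depend on $\eta$. So the lower bound is routine, provided Theorem~\ref{thm:squared-gap} is proved with $o(1)$ uniform over $q\in(0,\delta_F\sqrt m]$; I take this as given.

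For the upper bound, take the Tur\'an graph $T_{n,r}$ with $r\mid n$ and add a matching of $t$ edges inside one part. This is the construction from Section~\ref{sec:sharpness}. Write $m=e(T_{n,r})+t$ and take $t=\Theta(\sqrt m)=\Theta(n)$, with $t\le$ a small constant times $n$.

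The main obstacle is a precise estimate of $\lambda$ for this graph. I would use the equitable-type quotient. Let $s=n/r$. Vertices in the part hosting the matching have degree $(r-1)s$ or $(r-1)s+1$. Use the test vector that is constant on matched vertices, on unmatched vertices of that part, and on the other parts, and compute the $3\times3$ quotient matrix. This quotient is exact, since the partition is equitable, so its largest root is $\lambda$. Perturbation gives $\lambda=(r-1)s+\frac{2t}{rs}\cdot\frac{1}{?}+O(t^2/n^2)$, with an explicit coefficient.

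Concretely, I would avoid the exact coefficient by using first-order perturbation. With the unit Perron vector $x$ of $T_{n,r}$, which is uniform with $x_v^2=1/n$, we get
\[
\lambda^2(G)=\lambda^2(T_{n,r})+\frac{4t}{n}\lambda(T_{n,r})+O(t^2/n^2+t/n)=\Big(1-\frac1r\Big)2e(T_{n,r})+\frac{4(r-1)}{r}t+O(1).
\]
Since $m=e(T_{n,r})+t$, this gives
\[
\lambda^2(G)-\Big(1-\frac1r\Big)2m=\frac{2(r-1)}{r}t+O(1)=q .
\]

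Now choose $t$ so that this surplus equals $2C\beta\sqrt m+C^2$, i.e.\ $t\approx\frac{r}{r-1}C\beta\sqrt m$. Add one extra edge if needed to absorb the $O(1)$ error; this costs a factor $1+O(1/\sqrt m)$. Next count copies of $F$. Mubayi's estimate (Lemma~\ref{lem:Mub2010-c}) gives $c(n,F)$ copies through each matching edge. Copies using two or more matching edges number $O(t^2n^{f-4})=O(C^2 n^{f-2})$. This is at most $\eta$ times the main term once $C\le C_1(F,\eta)$ is small; this is exactly why $C_1$ depends on $\eta$. Hence
\[
N_F\le (1+\eta/2)\,t\,\alpha_F n^{f-2}.
\]
Finally substitute $n=\big(\frac{2r}{r-1}\big)^{1/2}\sqrt m\,(1+o(1))$ and $t$. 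The product is $\frac{r}{r-1}\beta\,\alpha_F\big(\frac{2r}{r-1}\big)^{(f-2)/2}C m^{(f-1)/2}$, and $\frac{r}{r-1}\beta=\big(\frac{2r}{r-1}\big)^{1/2}$, so this equals $\Ksharp Cm^{(f-1)/2}$. That completes the sharpness.
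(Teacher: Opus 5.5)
Your lower bound is not actually proved. In the paper, Theorem~\ref{thm:squared-gap} is \emph{derived from} the statement you are asked to prove, by exactly your substitution $q\approx 2C\sqrt{2(1-1/r)}\sqrt m$ run in reverse. Invoking it and ``taking it as given'' is therefore circular and assumes all of the content. What is missing is the real argument, in four steps:
\begin{enumerate}
\item \textbf{Gap-preserving regularization.} Delete $\alpha$-light edges and $\beta$-deficient vertices. This keeps $\Phi=\lambda/\sqrt{e}$ non-decreasing, and Theorem~\ref{thm:edge-spec-ESS} shows that fewer than $\delta m$ edges are lost. Hence the surviving graph $H$ still has gap $(1-\theta)C$.
\item \textbf{Stability and refinement.} Every vertex of $H$ has internal degree below $4\eps_1 n$ and few missing cross-edges, and the Perron entries satisfy $x_v^2\le(1+O_F(\eps_1))/n$.
\item \textbf{Per-class-edge count.} Each class-edge satisfies $N_F(H,e)\ge(1-O_F(\eps_2))c(n,F)$, over pairwise disjoint families.
\item \textbf{Conversion.} Nikiforov's bound for the $r$-partite graph $H-P$, together with the near-uniform Perron vector, gives $\lambda(H)\le\tau_r(h-p)+2(1+O_F(\eps_1))p/n$. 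Comparing with the surviving gap yields $p\ge(1-O_F(\theta))Cn$ class-edges.
\end{enumerate}

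Worse, the form of Theorem~\ref{thm:squared-gap} you assume is false: one fixed $\delta_F$ with the $o(1)$ uniform over $0<q\le\delta_F\sqrt m$. So is your remark that $C_0$ need not depend on $\eta$. Take $F=K_{r+1}$ and add a star with $tn$ leaves inside one part of $T_{n,r}$. Every copy of $K_{r+1}$ uses exactly one star edge, so $N_F=tn\,c(n,F)=(1+o(1))\Ksharp\,t\,m^{(f-1)/2}$. By Remark~\ref{rem:fixed-gap-caveat}, the gap is $t+\frac{r}{r-1}t^2+o(1)$. Thus for a fixed gap $C$ one only needs $t$ slightly above $t_r(C)<C$, and the bound $(1-\eta)\Ksharp Cm^{(f-1)/2}$ fails once $\eta$ is small relative to $1-t_r(C)/C$. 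So $C_0$ must tend to $0$ with $\eta$. In the paper this enters through $C_0\le\eps_1\gamma/100$ in Claim~\ref{cl-W-sub-S2}, which rules out exactly such star-like vertices of large internal degree; that is what makes the Perron vector nearly uniform.

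Your sharpness part is fine and uses the paper's matching construction. Replacing the exact quotient-matrix computation by the Rayleigh bound $\lambda(G)\ge\lambda(T_{n,r})+2t/n$ is legitimate, since only a lower bound on $\lambda$ is needed there. One minor correction: copies through two matching edges number $O(C^2n^{f-2})=o(Cn^{f-1})$ for every fixed $C$, so they are not what forces $C_1$ to depend on $\eta$.
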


Theorem~\ref{thm:linear-small-gap} implies the following exact limit
\begin{equation}\label{eq:exact-limit}
  \lim_{C\to 0^+}\ \lim_{m\to\infty} 
  \frac{1}{C m^{(f-1)/2}} \min_{G} N_F (G) =\Ksharp,
\end{equation}
where the minimum is taken over all $m$-edge graphs $G$ with $\lambda(G)\ge\sqrt{(1\!-\!1/r)2m}+C$.

\begin{remark}\label{rem:large-gap}
Although Theorem~\ref{thm:linear-small-gap} is stated only for small gaps $C\le C_0$, it already resolves Conjecture~\ref{conj-FLZ} for every fixed $C>0$. Indeed, if $\lambda(G)\ge \sqrt{(1\!-\!1/r)2m}+C$ with $C>C_0$, then $\lambda(G)\ge \sqrt{(1\!-\!1/r)2m}+C_0$, so $ N_F (G)\ge(1-\eta)\Ksharp\,C_0\,m^{(f-1)/2}=\Omega(m^{(f-1)/2})$, as needed. Thus, 
Theorem \ref{thm:linear-small-gap} 
refines the conjectured bound $\Omega(m^{(f-1)/2})$ in both respects: it makes the dependence on $C$ linear, and it gives the best possible constant $\kappa_F$. 
\end{remark}

\begin{remark}
When $r=2$ and $F=K_3$, it was shown in \cite{CLT2026} that the
number of triangles forced is exactly linear for all $C>0$, with split
graphs extremal. For $r\ge3$, this is no longer so. The matching-added construction 
is optimal only to first order in the gap: adding a star inside a part reaches the same gap with fewer internal edges, hence fewer
copies, so the count is a nonlinear function of the
gap (see Remark~\ref{rem:fixed-gap-caveat}). Thus, the per-edge spectral cost depends
on the shape of the graph added inside a part, and the color-critical
case with $r\ge3$ is richer than the triangle case. 
\end{remark}

Next, we show that
Theorem~\ref{thm:linear-small-gap}
immediately implies
Theorem~\ref{thm:squared-gap}. Set 
$\tau_r(m):=\sqrt{2(1 \!-\!1/r)m}$. 
Let $\eta>0$, let $C_0=C_0(F,\eta/2)$ be the
constant of
Theorem~\ref{thm:linear-small-gap}, and set
$\delta_F=\min\{1,C_0\}$. Suppose that
$\lambda^2(G)\ge\tau_r(m)^2+q$ with
$0<q\le\delta_F\sqrt m$. Then
$\lambda(G)\ge\tau_r(m)+C$,  where 
\begin{equation*} 
  C:=\sqrt{\tau_r(m)^2+q}-\tau_r(m)
       =\frac{q}{\sqrt{\tau_r(m)^2+q}+\tau_r(m)}.
\end{equation*}
 Since
$C\le q/\tau_r(m)
   \le\delta_F/\sqrt{2(1-1/r)}\le C_0$,
Theorem~\ref{thm:linear-small-gap} applies
with parameter $\eta/2$:
\[
  N_F(G)\ge\left(1-\frac{\eta}{2}\right)
           \Ksharp\,C\,m^{(f-1)/2}.
\]
Since $q=o(m)$, we have
$C=(1+o(1))\tfrac{q}{2\tau_r(m)}$, so
\[
  N_F(G)\ge(1+o(1))\left(1-\frac{\eta}{2}\right)
   \frac{\Ksharp}{2\sqrt{2(1-1/r)}}
   \,q\,m^{(f-2)/2}.
\]
For sufficiently large $m$, we have 
$(1+o(1))(1-\tfrac{\eta}{2})\ge1-\eta$ and
$\tfrac{\Ksharp}{2\sqrt{2(1-1/r)}}
  =\tfrac{\alpha_F}{4}
   \bigl(\tfrac{2r}{r-1}\bigr)^{f/2}=B_F$.
Hence, we get $N_F(G)\ge(1-\eta)B_F\,q\,m^{(f-2)/2}$,
completing the proof of
Theorem~\ref{thm:squared-gap}.

\paragraph{Proof overview of Theorem~\ref{thm:linear-small-gap}.}
We prove the lower bound by contradiction. 
Suppose that $G$ is an $m$-edge graph with
$\lambda(G)\ge\sqrt{(1-1/r)2m}+C$ but with fewer than $(1-\eta)\Ksharp\,Cm^{(f-1)/2}$ copies
of $F$; in particular $ N_F (G)=o(m^{f/2})$. The argument 
contains four steps.

First, we regularize $G$ while keeping almost all of the spectral gap. Deleting
$\alpha$-light edges and $\beta$-deficient vertices one at a time does not decrease the
edge-spectral density $\Phi=\lambda/\sqrt{e}$, and the supersaturation theorem forces the
process to stop after fewer than $\delta m$ edges are removed. The surviving graph $H$
satisfies $e(H)>(1-\delta)m$ and 
$\lambda(H)\ge\tau_r(e(H))+(1-\theta)C$ (Claim~\ref{clm:regularization}). 

Second, since $ N_F (H)=o(h^{f/2})$, the stability theorem gives a partition
$V(H)=V_1\cup\cdots\cup V_r$ that is close to a balanced Tur\'an graph. We then refine the 
structure and show that 
the exceptional sets of low-degree and
high-internal-degree vertices are empty, so that  every vertex has small
internal degree and few missing cross-edges, and the Perron vector is almost uniform,
$x_v^2\le(1+o(1))/n$ (Claim~\ref{clm:clean-case}). 
The second part is standard and adapted from Fang--Lin--Zhai~\cite{FLZ}. 

Third, we count the copies created by the edges inside the parts. Each such class-edge
$e$ creates at least $(1-o(1))c(n,F)$ copies of $F$ whose only within-part edge is exactly $e$
(Claim~\ref{clm:sharp-local-count}), a first-order sharpening of the local per-edge count used at
the threshold. The families counted for distinct class-edges are disjoint, so it remains
to bound from below the number $p$ of class-edges.

The last step is the main new one: 
it converts the spectral gap $C$ into a sharp bound on the  number of class-edges (Claim~\ref{clm:sharp-class-edges}). Let $p$ be the number of class-edges. 
Deleting these $p$ edges leaves an
$r$-partite graph, whose spectral radius is at most $\tau_r(h-p)$ by Nikiforov's theorem. 
Combining the almost-uniformity of the 
Perron vector with   
 the bound on $\lambda(H)$ from  Claim~\ref{clm:regularization}, 
 we obtain $p\ge(1-o(1))Cn$. Summing the
per-edge count over the $p$ class-edges yields
$N_F(G)\ge(1-o(1))\Ksharp\,Cm^{(f-1)/2}$, against the assumption. The construction in
Section~\ref{sec:sharpness} shows that the constant $\Ksharp$ cannot be improved, 
and Remark~\ref{rem:fixed-gap-caveat} shows that
the count is different once the gap $C$ is bounded away from zero.

\paragraph{New ingredients.} 
 To prove Theorem~\ref{thm:linear-small-gap}, we must 
carry an additive gap $C$ through the whole argument and convert it into a lower bound on the number $p$ of class-edges. This needs three different arguments
that do not appear in Fang--Lin--Zhai~\cite{FLZ}: a regularization that \emph{preserves} the spectral gap
(Claim~\ref{clm:regularization}); a first-order-accurate version of the per-edge count (Claim~\ref{clm:sharp-local-count}); 
 the sharp conversion $p\ge(1-o(1))Cn$ from the spectral gap to the
number of class-edges (Claim~\ref{clm:sharp-class-edges}). 

 The pruning of
light edges and deficient vertices that keeps the edge-spectral
density non-decreasing was applied in  \cite{Niki2021,NZ2021b,FLZ}. 
 The new ingredient here is a
regularization that keeps almost all of the spectral gap, together with the exact conversion
of that gap into the number of edges inside the parts. This pair of steps turns an
additive spectral surplus into a sharp edge count, 
and we expect this approach to be useful 
for other spectral supersaturation problems.

\paragraph{Organization.}
  Section~\ref{sec:prelim} collects 
some tools: Mubayi's estimate for the per-edge count, the Erd\H{o}s--Simonovits supersaturation, 
the edge-spectral Tur\'an
theorem, the edge-spectral supersaturation and stability, and a regularization lemma. In Section~\ref{sec:main}, we prove Theorem~\ref{thm:linear-small-gap} by regularizing the graph with almost the same spectral
gap, passing to a clean stability partition, counting the copies of $F$ created by each
class-edge, and converting the spectral gap into a sharp lower bound on the number of class-edges.
In Section~\ref{sec:sharpness}, we show that the constant in Theorem~\ref{thm:linear-small-gap} is best possible. 
In Section~\ref{sec:concluding}, we conclude with several open
problems and directions for further work.

\section{Preliminaries}

\label{sec:prelim}

Recall that $c(n,F)$ denotes the minimum number of copies of $F$ in a graph obtained from $T_{n,r}$ by adding one edge inside one part. 
The following estimate is due to Mubayi~\cite{Mub2010}.

\begin{lemma}[Mubayi's estimate \cite{Mub2010}]\label{lem:Mub2010-c}
Let $F$ be a color-critical graph with $|F|=f$ and $\chi(F)=r+1\ge 3$.  Then there are constants $\alpha_F>0$ and $\beta_F>0$ such that, for all sufficiently large $n$,
\[
  \left|c(n,F)-\alpha_F n^{f-2}\right|<\beta_F n^{f-3}.
\]
In particular, for all sufficiently large $n$, $  \frac12\alpha_F n^{f-2}<c(n,F)<2\alpha_F n^{f-2}$. 
\end{lemma}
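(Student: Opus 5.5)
We prove Lemma~\ref{lem:Mub2010-c} by writing $c(n,F)$ explicitly as a finite sum of copy counts. Each term is a polynomial-type count in the part sizes of $T_{n,r}$, and the leading coefficient will be strictly positive.

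\textbf{Setup.} Fix a critical edge $e_0=ab$ of $F$, so that $F-ab$ is $r$-colorable. Let $T=T_{n,r}$ have parts $U_1,\dots,U_r$ of sizes $\lfloor n/r\rfloor$ or $\lceil n/r\rceil$, and let $T^+=T+uv$ with $u,v\in U_1$. Every copy of $F$ in $T^+$ must use the edge $uv$, since $T$ is $r$-partite and $\chi(F)=r+1$. We count these copies via embeddings $\phi\colon V(F)\to V(T^+)$ with $\{\phi(x),\phi(y)\}=\{u,v\}$ for some edge $xy\in E(F)$, and then divide by $|\Aut(F)|$.

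\textbf{Reduction to colorings.} Fix an edge $xy$ of $F$ mapped onto $uv$. The remaining $f-2$ vertices are mapped injectively into $V(T)\setminus\{u,v\}$. Such a map is an embedding exactly when:
\begin{itemize}[nosep]
\item the induced part-assignment $\sigma\colon V(F)\to[r]$, with $\sigma(x)=\sigma(y)=1$, is a proper coloring of $F-xy$;
\item no vertex other than $x,y$ is sent to $u$ or $v$ (this is automatic from injectivity).
\end{itemize}
For a fixed admissible $\sigma$, the number of injective maps with that part-assignment is
\[
\prod_{i}(|U_i|-\epsilon_i)^{\underline{k_i}}, \qquad k_i=|\sigma^{-1}(i)\setminus\{x,y\}|,
\]
where $\epsilon_1=2$, $\epsilon_i=0$ for $i\ge 2$, and $t^{\underline{k}}$ denotes the falling factorial. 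Each factor equals $(n/r)^{k_i}+O(n^{k_i-1})$, because every part size is $n/r+O(1)$. Summing over edges $xy$, both orientations onto $(u,v)$, and colorings $\sigma$, we obtain
\[
c(n,F)=\frac{1}{|\Aut(F)|}\sum_{xy,\sigma}\Bigl((n/r)^{f-2}+O(n^{f-3})\Bigr).
\]
This sum depends on the choice of $u,v$ only through which part contains them. By symmetry of the balanced parts, the minimum over the possible insertions differs from any fixed choice only in the $O(n^{f-3})$ term.

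\textbf{Conclusion.} Set
\[
\alpha_F=\frac{2M}{|\Aut(F)|\,r^{f-2}},
\]
where $M$ is the number of pairs $(xy,\sigma)$ with $xy\in E(F)$ and $\sigma$ a proper $r$-coloring of $F-xy$ in which $x,y$ receive the same color, namely color $1$. The factor $2$ accounts for the two orientations. Positivity of $\alpha_F$ follows from color-criticality: the edge $ab$ admits a proper $r$-coloring $\sigma$ of $F-ab$. This coloring must give $a$ and $b$ the same color, since otherwise $F$ itself would be $r$-colorable. After permuting colors we may take that color to be $1$, so $M\ge1$.

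The error term is a sum of finitely many $O(n^{f-3})$ terms, whose constants depend only on $F$. Hence some $\beta_F$ works for all large $n$. The final sandwich $\frac12\alpha_F n^{f-2}<c(n,F)<2\alpha_F n^{f-2}$ then follows once $\beta_F n^{f-3}<\frac12\alpha_F n^{f-2}$, that is, for $n>2\beta_F/\alpha_F$.

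The main point requiring care is that the count is uniform across part-size imbalances of $\pm1$ and across the choice of part receiving the added edge. Both effects only perturb lower-order terms, because each falling factorial is a polynomial in the part sizes with leading term $(n/r)^{k_i}$.
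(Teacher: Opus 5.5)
Your argument is correct and complete. The paper gives no proof of this lemma; it quotes it from Mubayi~\cite{Mub2010} and uses it as a black box, so your write-up is a self-contained replacement rather than a variant of an argument in the text.

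Your key observation is this: an embedding with $\phi(x)=u$, $\phi(y)=v$ is exactly an injective placement whose part-assignment is a proper $r$-coloring of $F-xy$ putting $x,y$ in the part of $u,v$, and both $(x,y)$ and $\sigma$ are recovered from $\phi$. This yields not just an asymptotic but an exact formula
\[
  N_F(T_{n,r}+uv)=\frac{1}{|\Aut(F)|}\sum_{(x,y),\,\sigma}\ \prod_{i=1}^{r}(|U_i|-\epsilon_i)^{\underline{k_i}},
\]
and with it the explicit constant $\alpha_F=2M/(|\Aut(F)|\,r^{f-2})$. As a check, for $F=K_{r+1}$ you get $M=\binom{r+1}{2}(r-1)!$, so $2M=(r+1)!$ and $\alpha_F=r^{-(r-1)}$. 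This is the value the paper computes by hand in the introduction.

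The same formula, with arbitrary part sizes $n_j$ in place of $|U_i|$, also gives the continuity statement of Lemma~\ref{lem:near-balanced-one-edge} up to lower-order terms, since each falling factorial is then at least $\bigl((\tfrac1r-\xi)n-f\bigr)^{k_j}$. It also shows that when $r\mid n$ every one-edge insertion produces exactly $c(n,F)$ copies, which the sharpness construction in Section~\ref{sec:sharpness} implicitly relies on.

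The citation buys the paper brevity. Your derivation buys transparency about where $\alpha_F$ comes from and about the uniformity of the count over part sizes and over the choice of part receiving the edge.
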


The following lemma counts the copies of $F$ in a 
nearly Tur\'{a}n graph with one edge added. 

\begin{lemma}[Mubayi~\cite{Mub2010}]\label{lem:near-balanced-one-edge}
Let $F$ be color-critical with $|F|=f$ and $\chi(F)=r+1$.  For each vector $ \mathbf n=(n_1,\ldots,n_r)$ with  $n_1+\cdots+n_r=n$, 
let $c (\mathbf n,F)$ be the minimum number of copies of $F$ in the complete $r$-partite graph with parts of sizes $n_1,\ldots,n_r$ after adding one edge inside a partite set.  If $0\le \xi < \frac{1}{3r}$ and 
$ \left|n_j-\frac nr\right|\le \xi n$ for all $j\in [r]$, 
 then  $  c (\mathbf n,F)\ge \big( 1-O_F(\xi) \big)\, c(n,F).$ 
\end{lemma}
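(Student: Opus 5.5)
The plan is to fix an extremal configuration for $c(\mathbf n,F)$ and compare it directly with the balanced one. Let $K(\mathbf n)$ be the complete $r$-partite graph with part sizes $n_1,\dots,n_r$. Add an edge $uv$ inside some part $V_i$; copies of $F$ not using $uv$ do not exist, because $K(\mathbf n)$ is $r$-colorable and $\chi(F)=r+1$. So $c(\mathbf n,F)$ is the minimum, over $i$, of the number of embeddings of $F$ into $K(\mathbf n)+uv$ that use $uv$, divided by $|\Aut(F)|$.

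Each such embedding sends some critical edge $ab$ of $F$ to $uv$. Since $uv$ is the only edge inside a part, $F-ab$ must be properly $r$-colored by the parts, with $a,b$ in part $i$. Thus the number of copies through $uv$ is a finite sum over pairs $(ab,\phi)$, where $ab$ is an edge with $\chi(F-ab)=r$ and $\phi$ is an $r$-coloring of $F-ab$ with $\phi(a)=\phi(b)=i$, modulo automorphisms. The term for $(ab,\phi)$ counts injective maps of the remaining $f-2$ vertices into the prescribed parts, avoiding $u,v$. This count is
\[
  \prod_{j\ne i} (n_j)_{s_j}\cdot (n_i-2)_{s_i},
\]
where $s_j=|\phi^{-1}(j)\setminus\{a,b\}|$ and $(x)_s$ denotes the falling factorial. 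Hence
\[
  c(\mathbf n,F)=\min_i \sum_{(ab,\phi)} w_{ab,\phi}\, P_{ab,\phi}(\mathbf n),
\]
with fixed nonnegative weights $w_{ab,\phi}$ and products $P_{ab,\phi}$ of total degree $f-2$.

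Now compare term by term with the balanced vector. If $|n_j-n/r|\le\xi n$ with $\xi<1/(3r)$, then each factor satisfies $(n_j)_s\ge ((1-r\xi)n/r - s)^s$. So each product is at least $(1-r\xi)^{f-2}(1-O(1/n))$ times the corresponding product for the balanced part sizes, where the balanced sizes are within $1$ of $n/r$. The same ratio bound holds uniformly over $i$, since the sum has the same structure for every choice of $i$ up to relabelling the colours. It follows that
\[
  c(\mathbf n,F)\ge (1-r\xi)^{f-2}(1-O_F(1/n))\,c(n,F)\ge (1-O_F(\xi))\,c(n,F),
\]
using $(1-r\xi)^{f-2}\ge 1-(f-2)r\xi$.

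The main obstacle is the additive $O(1/n)$ error when $\xi$ is much smaller than $1/n$. I would handle it as follows. If $\xi n<1$, then the $n_j$ are integers within $\xi n<1$ of $n/r$. This forces the vector $\mathbf n$ to be the balanced vector up to permutation, except when $r\nmid n$. In that case, entries within distance less than $1$ of $n/r$ are exactly $\lfloor n/r\rfloor$ or $\lceil n/r\rceil$, and the sum $n$ forces the Turán sizes. So $c(\mathbf n,F)=c(n,F)$ exactly. Otherwise $1/n\le\xi$, and the $O(1/n)$ error is absorbed into $O_F(\xi)$. The condition $\xi<1/(3r)$ keeps every part of linear size, which makes the ratio estimates valid.
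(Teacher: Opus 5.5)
Your proof is correct. The paper itself gives no proof of this lemma; it cites it from Mubayi~\cite{Mub2010}, so there is no in-paper argument to compare yours against.

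Your route is self-contained. Every copy must use $uv$, and the index set of triples $(a,b,\phi)$ with $\phi(a)=\phi(b)=i$ does not depend on $\mathbf n$. So the count with the edge in part $i$ is an exact nonnegative combination of products $(n_i-2)_{s_i}\prod_{j\ne i}(n_j)_{s_j}$. You compare these term by term with any arrangement of the Tur\'an sizes that has the edge in the same part $i$. That Tur\'an count is at least $c(n,F)$ because $c(n,F)$ is defined as a minimum. This gives $c(\mathbf n,F)\ge(1-r\xi)^{f-2}(1-O_F(1/n))\,c(n,F)$. Your integrality observation then removes the residual $1/n$ loss: if $\xi n<1$, then $\mathbf n$ is a permutation of the Tur\'an sizes and equality holds.

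Two small points should be stated explicitly.
\begin{enumerate}
\item The bound $(n_j)_s\ge((1-r\xi)n/r-s)^s$, and its analogue with $-2-s_i$ for part $i$, needs every part to have at least $f$ vertices. This holds for $n\ge n_0(F)$, since $\xi<1/(3r)$ gives $n_j\ge 2n/(3r)$.
\item For $n<n_0(F)$ with $\xi\ge 1/n$, the statement is trivial once the implicit constant exceeds $n_0(F)$, because then $1-O_F(\xi)<0$.
\end{enumerate}

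Your exact formula also yields Mubayi's estimate (Lemma~\ref{lem:Mub2010-c}) when all parts are (nearly) equal. It gives $\alpha_F=r^{-(f-2)}N/|\Aut(F)|$, where $N$ counts the triples $(a,b,\phi)$ with $\phi(a)=\phi(b)=1$. For $K_{r+1}$ one gets $N=(r+1)!$, which recovers $\alpha_{K_{r+1}}=r^{-(r-1)}$.

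A shorter but less self-contained alternative uses monotonicity. With $\mu=\min_j n_j$, the graph $K(\mathbf n)+uv$ contains $T_{r\mu,r}+uv$, so $c(\mathbf n,F)\ge c(r\mu,F)$. Since $r\mu\ge(1-r\xi)n$, Lemma~\ref{lem:Mub2010-c} then finishes up to the same $O_F(1/n)$ loss, which your integrality argument again disposes of.
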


\subsection{Edge-spectral supersaturation and stability}

The following classical result of Erd\H{o}s and Simonovits says
that once the edge count exceeds the
Tur\'an number by a positive fraction of
$n^2$, the graph already contains
$\Omega(n^f)$ copies of $F$. 

\begin{lemma}[Erd\H{o}s--Simonovits \cite{ES1983}] 
\label{lem:ES-supersaturation}
Let $F$ be a fixed graph with $|F|=f$ and $\chi(F)=r+1$.  For every $\eta>0$, there are constants $\delta=\delta(F,\eta)>0$ and $n_0=n_0(F,\eta)$ such that every graph $G$ on $n\ge n_0$ vertices with
$  e(G)\ge e(T_{n,r})+\eta n^2$ 
contains at least $  \delta n^f$ copies of $F$.
\end{lemma}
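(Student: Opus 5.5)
This is the classical Erd\H{o}s--Simonovits supersaturation theorem. I would prove it by the standard averaging argument over small vertex subsets, combined with the Erd\H{o}s--Stone--Simonovits theorem.

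\emph{Step 1: a good subset has positive density.} Fix $\eta>0$. Since $e(T_{n,r})\ge(1-\frac1r)\frac{n^2}{2}-O(n)$, the hypothesis gives edge density $e(G)/\binom n2\ge 1-\frac1r+2\eta-o(1)$. Choose a constant $k=k(F,\eta)$ large enough that the Erd\H{o}s--Stone--Simonovits theorem applies: every graph on $k$ vertices with more than $(1-\frac1r+\eta)\binom k2$ edges contains a copy of $F$.

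\emph{Step 2: averaging over $k$-subsets.} Choose a uniformly random $k$-subset $K\subseteq V(G)$. The expected edge density of $G[K]$ equals the density of $G$, which is at least $1-\frac1r+2\eta-o(1)$, and densities are bounded by $1$. Markov's inequality applied to $1-\mathrm{dens}(G[K])$ then gives a constant $\gamma=\gamma(\eta,r)>0$ such that at least $\gamma\binom nk$ of the $k$-subsets $K$ satisfy $e(G[K])>(1-\frac1r+\eta)\binom k2$. Concretely, one can take $\gamma\approx \eta$.

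\emph{Step 3: double counting.} By Step 1, each good $K$ contains at least one copy of $F$. A fixed copy of $F$, which has $f$ vertices, lies in at most $\binom{n-f}{k-f}$ of the $k$-subsets. Hence
\[
N_F(G)\ \ge\ \gamma\binom nk\Big/\binom{n-f}{k-f}\ \ge\ \gamma\, k^{-f}\, n^f\cdot(1-o(1)),
\]
so we may take $\delta=\gamma k^{-f}/2$ and $n_0$ large.

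The only delicate point is Step 2: turning a density surplus into a positive proportion of dense $k$-subsets. The Markov bound on $1-\mathrm{dens}(G[K])$ handles this cleanly, because the relevant quantity is bounded between $0$ and $1$. Everything else is bookkeeping of constants depending only on $F$ and $\eta$.
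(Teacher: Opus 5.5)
Your argument is correct and is the classical Erd\H{o}s--Simonovits averaging proof: random $k$-subsets combined with the Erd\H{o}s--Stone--Simonovits theorem, followed by double counting. The paper gives no proof of its own and imports the lemma from their work; the one detail you leave unstated, that the Markov step needs $\frac1r-\eta>0$, is harmless, since $e(G)\le\binom n2$ forces $\eta<\frac{1}{2r}$ for large $n$ (otherwise the hypothesis is vacuous).
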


We need to use the following edge-spectral Tur\'an theorem~\cite{Nikiforov}.

\begin{theorem}[Nikiforov \cite{Nikiforov}]\label{thm:nikiforov}
If $G$ is a $K_{r+1}$-free graph with $m$ edges, then
\[
  \lambda^2(G)\le {\Big(1-\frac1r\Big)2m}.
\]
For $r\ge 3$, the equality holds only for regular complete $r$-partite graphs.
\end{theorem}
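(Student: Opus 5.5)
We will prove the bound by combining the Rayleigh quotient with the Motzkin--Straus inequality through a single application of Cauchy--Schwarz, and then read the equality case off the equality conditions of the two inequalities.

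\emph{The inequality.} Let $x$ be a nonnegative unit Perron eigenvector of $A(G)$. Then
\[
  \lambda(G)=x^{T}A(G)x=2\sum_{uv\in E(G)}x_ux_v .
\]
By Cauchy--Schwarz over the $m$ edges,
\[
  \lambda^2(G)\le 4m\sum_{uv\in E(G)}x_u^2x_v^2 .
\]
Set $y_u:=x_u^2$, so that $y\ge 0$ and $\sum_u y_u=1$. Since $G$ is $K_{r+1}$-free, the Motzkin--Straus theorem gives
\[
  \sum_{uv\in E(G)}y_uy_v\le \frac12\Bigl(1-\frac1{\omega(G)}\Bigr)\le\frac12\Bigl(1-\frac1r\Bigr).
\]
Combining the two displays gives $\lambda^2(G)\le(1-\tfrac1r)2m$.

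\emph{Equality when $r\ge3$.} Suppose $\lambda^2(G)=(1-\frac1r)2m$. Then both inequalities above are tight.

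\begin{enumerate}
\item Tightness in Cauchy--Schwarz means $x_ux_v=c$ for every edge $uv$, for some constant $c$. Since $\lambda>0$, we have $c>0$. Hence every edge lies inside the support $S=\{u:x_u>0\}$, and $G[S]$ contains all $m$ edges.
\item Tightness in Motzkin--Straus forces $\omega(G)=r\ge3$. A bipartite or small-clique graph would give Lagrangian at most $\frac12(1-\frac1{\omega})<\frac12(1-\frac1r)$. So $G$ contains a triangle, and by the first item this triangle lies in $G[S]$.
\item Because $x$ is a Perron vector, $S$ is the vertex set of a single component of $G$. On this connected component we have $x_ux_v=c$ along every edge. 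Along any path the values of $x$ therefore alternate between $a$ and $c/a$. The odd cycle (the triangle) forces $a=c/a$, so $x$ is constant on $S$. Hence $G[S]$ is regular, say with $|S|=n'$, and $y_u=1/n'$ on $S$.
\item Tightness in Motzkin--Straus with uniform weights reads $e(G[S])/n'^2=\frac12(1-\frac1r)$, that is, $m=e(T_{n',r})$ exactly. By the equality case of Tur\'an's theorem, $G[S]$ is the balanced complete $r$-partite graph. This graph is regular only when $r\mid n'$.
\end{enumerate}

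So $G$, up to isolated vertices, is a regular complete $r$-partite graph. Conversely, such graphs attain equality by direct computation: for $K_{t,\dots,t}$ we have $\lambda=(r-1)t$ and $m=\binom r2t^2$.

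\emph{Main obstacle.} The only delicate step is the equality analysis. One has to check that the support of $x$ is exactly one component carrying all edges, and that an odd cycle exists there so that the alternating values of $x$ collapse to a constant. Both points follow from the tightness of Cauchy--Schwarz together with the strictness of Motzkin--Straus when $\omega(G)<r$.
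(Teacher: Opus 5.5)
Your proof is correct. It is Nikiforov's original argument: Cauchy--Schwarz over the edges combined with the Motzkin--Straus bound. The paper does not reprove Theorem~\ref{thm:nikiforov}; it only cites it, so there is no in-paper proof to compare against.

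There is one sentence in your equality analysis to repair. The claim ``because $x$ is a Perron vector, $S$ is the vertex set of a single component'' is false in general. If several components share the spectral radius, a nonnegative unit eigenvector can be spread over all of them. Either of the following fixes works.
\begin{itemize}
\item Choose $x$ from the start to be the Perron vector of a single component $G_1$ with $\lambda(G_1)=\lambda(G)$, extended by zeros. Then Cauchy--Schwarz tightness ($x_ux_v=c>0$ on every edge) puts all $m$ edges inside $V(G_1)$.
\item Alternatively, observe that equality rules out two nontrivial components. Otherwise the component attaining $\lambda(G)$ would have fewer than $m$ edges, and the inequality you just proved would give $\lambda(G)^2<(1-\frac1r)2m$.
\end{itemize}

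In item 4, state explicitly that $m=\frac{r-1}{2r}n'^2$ must be combined with Tur\'an's bound $m\le e(T_{n',r})\le\frac{r-1}{2r}n'^2$. That is what gives $r\mid n'$ and equality in Tur\'an's theorem. The remainder of the argument is sound: the triangle collapses the alternating values $a,\,c/a$ to a constant, the eigen-equation then gives regularity, and your converse computation for $K_{t,\dots,t}$ is correct.
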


Li, Liu and Zhang~\cite{LLZ-edge-spectral} 
 proved the following two theorems 
 for $F$-free graphs $G$.  
 Here we state them
in a slightly more general form: $G$ need not be $F$-free but 
only assumed to satisfy $N_F(G)=o(m^{f/2})$, which is the form we need, 
since in our argument the host 
graph $G$ is not $F$-free but contains few copies of $F$.  
This general form was recently established by Fang, Lin and Zhai~\cite{FLZ} using Simonovits' progressive induction,  
and it also follows from the $F$-free case  by using the graph removal lemma.

\begin{theorem}[Edge-spectral supersaturation \cite{FLZ}] 
\label{thm:edge-spec-ESS}
Let $F$ be a graph of order $f$ with $\chi(F)=r+1\ge 3$, 
and let $G$ be a graph of sufficiently large size $m$ such that 
$N_F(G)=o(m^{f/2})$. Then 
$$ \lambda^2 (G)\le \Big(1-\frac{1}{r}+o(1) \Big)2m. $$ 
\end{theorem}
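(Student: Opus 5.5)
The statement to prove is Theorem~\ref{thm:edge-spec-ESS}: a graph $G$ of large size $m$ with $N_F(G)=o(m^{f/2})$ satisfies $\lambda^2(G)\le(1-\frac1r+o(1))2m$. I will derive it from the $F$-free edge-spectral Erd\H{o}s--Stone--Simonovits theorem of Li, Liu and Zhang~\cite{LLZ-edge-spectral}, using the graph removal lemma. Deleting edges changes $\lambda^2$ only slightly when few edges are deleted. The one complication is that the removal lemma is stated relative to the number of vertices $n$, not $m$, so I must control $n$.

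\textbf{Reducing the order.} Isolated vertices do not matter, but $n$ may still be much larger than $\sqrt m$. I first prune low-degree vertices.
\begin{itemize}
\item Fix $\varepsilon>0$ and repeatedly delete any vertex of degree less than $\varepsilon\sqrt m$.
\item If $G'$ is obtained by deleting a vertex $v$ of degree $d$, then $\lambda^2(G)\le\lambda^2(G')+2d$. This follows from the Perron vector: with $\lambda=\lambda(G)$, the quadratic form loses at most $2x_v\sum_{u\sim v}x_u=2\lambda x_v^2$. A cleaner route is the standard bound $\lambda^2(G)-\lambda^2(G-v)\le 2d(v)$, obtained by comparing $A^2$ row sums, or via $\lambda^2\le\lambda^2(G-v)+\ldots$.
\item Since what I need is a multiplicative comparison, I will use the known monotonicity trick (as in \cite{Niki2021,NZ2021b}). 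Either the ratio $\lambda^2/e$ does not decrease under such deletions, or, if it decreases, the deleted vertex has $x_v$ small, and hence $\lambda^2$ changes by $O(\varepsilon\sqrt m\cdot \lambda x_v^2)$.
\item Simplest alternative: split the edge set. Let $L$ be the set of vertices of degree below $\varepsilon\sqrt m$, and bound $\lambda^2(G)\le(\lambda(G[V\setminus L])+\lambda(G_L))^2$, where $G_L$ consists of the edges touching $L$. Its maximum degree is not small, but $\lambda(G_L)\le\sqrt{2\cdot\max\text{stuff}}$ fails. So I prefer the pruning process on the density $\Phi=\lambda/\sqrt e$.
\item In the final graph $H$, every vertex has degree at least $\varepsilon\sqrt m$, so $n(H)\le 2e(H)/(\varepsilon\sqrt m)\le 2\sqrt m/\varepsilon$.
\end{itemize}

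\textbf{Applying removal.} Now $N_F(H)\le N_F(G)=o(m^{f/2})=o(n(H)^f)$, provided $n(H)\ge c\sqrt{e(H)}$; this always holds since $e(H)\le n^2/2$. The graph removal lemma deletes $o(n^2)=o(m)$ edges (using $n\le 2\sqrt m/\varepsilon$) to obtain an $F$-free graph $H'$. By \cite{LLZ-edge-spectral}, $\lambda^2(H')\le(1-\frac1r+o(1))2e(H')$. Deleting $o(m)$ edges changes $\lambda$ by at most $\sqrt{2\cdot o(m)}=o(\sqrt m)$, since $\lambda$ of the deleted part is at most $\sqrt{2\cdot\#\text{edges}}$. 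Hence $\lambda^2(H)\le(1-\frac1r+o(1))2m$, with the $o(1)$ terms depending on $\varepsilon$ and then $\varepsilon\to0$ slowly.

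\textbf{Transferring back to $G$.} To get the bound for $G$ itself, I run the pruning as in the cited framework. Delete a vertex only if doing so does not decrease $\Phi^2=\lambda^2/e$. If some vertex $v$ has $d(v)<\varepsilon\sqrt m$ but deleting it would decrease $\Phi^2$, I use the eigen-equation to show $\lambda^2 x_v^2\le d(v)\sum x_u^2$, which gives a contradiction once $\lambda^2\ge\varepsilon' m$. If instead $\lambda^2<\varepsilon' m$, the conclusion is trivial. So $\Phi(G)\le\Phi(H)$, and the bound for $H$ transfers to $G$.

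The main obstacle is this pruning-monotonicity step: showing that removing a low-degree vertex never decreases $\lambda^2/e$ when $\lambda^2$ is of order $m$. I would first try the Rayleigh-quotient estimate $\lambda^2(G-v)\ge\lambda^2(1-2x_v^2)-O(\ldots)$, combined with $x_v\le\sqrt{d(v)}/\lambda$.
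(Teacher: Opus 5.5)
Your overall route (prune to a graph $H$ with $n(H)=O(\sqrt m)$, apply the removal lemma, invoke the $F$-free theorem of Li--Liu--Zhang, transfer back) is the removal-lemma derivation the paper alludes to. The paper gives no proof of its own and attributes the statement to Fang--Lin--Zhai. Your removal and edge-perturbation steps are fine.

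\textbf{First gap: the pruning step.} The step you flag as the main obstacle is left open, and the mechanism you propose cannot close it. The Rayleigh-quotient comparison shows that deleting $v$ does not decrease $\Phi=\lambda/\sqrt e$ when roughly $2m\,x_v^2\le(1-\beta)d(v)$ (this is Definition~\ref{def:beta-deficient} with Lemma~\ref{lem:admissible}(ii)). Cauchy--Schwarz on the eigen-equation only gives $x_v^2\le d(v)/\lambda^2$, i.e.\ $2m\,x_v^2\le 2m\,d(v)/\lambda^2$. That is never enough, since $\lambda^2<2m$ for every graph.

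The monotonicity claim is also false under your hypothesis $\lambda^2\ge\varepsilon' m$. For $G=K_{1,s}\cup sK_2$ one has $m=2s$ and $\lambda^2=s=m/2$, yet deleting a leaf gives $\Phi^2=(s-1)/(2s-1)<\tfrac12=\Phi^2(G)$.

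The missing idea is a uniform bound on the Perron vector, available only above $\lambda^2=m$:
\begin{itemize}
\item Argue by contradiction with $\lambda^2(G)\ge(1-\frac1r+\mu)2m\ge(1+2\mu)m$.
\item Monotonicity of $\Phi$ keeps $\lambda^2(G_i)\ge(1+2\mu)e(G_i)$ in every intermediate graph $G_i$.
\item So Lemma~\ref{lem:LLZ} with $\delta=\min\{2\mu,0.79\}$ gives $x_{\max}<\delta^{-4}e(G_i)^{-1/4}$ in $G_i$.
\item Then $x_v\le d(v)\,x_{\max}/\lambda(G_i)$ yields $2e(G_i)x_v^2\le 2\delta^{-8}d(v)^2/\sqrt{e(G_i)}$.
\item This is at most $d(v)/2$ once $d(v)\le\varepsilon\sqrt{e(G_i)}$ with $\varepsilon\le\delta^8/4$.
\end{itemize}
Only then is every low-degree vertex $\beta$-deficient, so that Lemma~\ref{lem:admissible}(ii) applies.

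\textbf{Second gap: you never show $e(H)=\Omega(m)$.}
\begin{itemize}
\item The removal-lemma hypothesis $N_F(H)=o(n(H)^f)$ needs $n(H)=\Omega(\sqrt m)$. The inequality $e(H)\le n(H)^2/2$ gives only $n(H)\ge\sqrt{2e(H)}$.
\item Transferring via $\Phi(G)\le\Phi(H)$ needs $\lambda^2(H)\le(1-\frac1r+o(1))2e(H)$, not $(\ldots)2m$ as you wrote.
\item Consequently the $o(m)$ removed edges must be $o(e(H))$, and the Li--Liu--Zhang bound must be applied with $e(H')\to\infty$.
\end{itemize}
Monotonicity alone does not stop the pruning from consuming almost all edges. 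The quantitative gain in Lemma~\ref{lem:admissible}(ii) does:
\begin{itemize}
\item Prune at threshold $\varepsilon\sqrt{e(G_i)}$. Each deletion of a vertex of degree $d_i$ raises $\Phi$ by at least $\beta d_i/(4e(G_i))$.
\item Summing gives $\Phi(H)-\Phi(G)\ge\frac{\beta}{4}(1-o(1))\ln\frac{m}{e(H)}$.
\item Since $\Phi\le\sqrt2$ always, this forces $e(H)\ge e^{-6/\beta}m$.
\end{itemize}
The final $H$ then has minimum degree at least $\varepsilon\sqrt{e(H)}$ and $n(H)=\Theta(\sqrt m)$, and your removal-lemma argument goes through.
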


For two disjoint vertex sets $A,B$, 
we write $K_{A,B}$ for the complete bipartite graph on the parts $A$ and $B$. 
For a vertex set $C$, we write $T_{C,r}$ 
for an $r$-partite Tur\'{a}n graph on the vertex set $C$.  

\begin{theorem}[Edge-spectral supersaturation-stability \cite{FLZ}] 
\label{thm:edge-spectral-stability}
Let $F$ be a graph of order $f$ and $\chi (F)=r+1\ge 3$.
For every $\varepsilon >0$, there exist $\delta >0$ and $m_0$ such that if $G$ is a graph of size $m\ge m_0$ with $N_F(G)=o(m^{f/2})$ and $\lambda^2(G)\ge (1- \frac{1}{r} - \delta )2m$, then 
\begin{itemize}
\item[\rm (a)] When $r = 2$, there exist disjoint vertex sets $A, B \subseteq V(G)$ such that $d(G, K_{A, B}) \leq \varepsilon m$.

\item[\rm (b)]
 When $r \geq 3$, there exists a vertex set $C \subseteq V(G)$ such that $d(G, T_{C, r}) \leq \varepsilon m$. 
 \end{itemize} 
\end{theorem}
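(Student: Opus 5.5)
The plan is to reduce to the $F$-free case, which Li, Liu and Zhang~\cite{LLZ-edge-spectral} already settled, by the graph removal lemma. The removal lemma needs a host graph with only $O(\sqrt m)$ vertices, so the first step is to prune $G$ to such a graph while keeping almost all edges and almost all of the spectral density. Throughout, fix $\varepsilon>0$ and choose auxiliary constants $\delta\ll\gamma\ll\varepsilon$.

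\emph{Step 1 (pruning to $O(\sqrt m)$ vertices).} I cannot simply delete low-degree vertices. A star with $m$ edges has $\lambda=\sqrt m$, yet all of its leaves have degree $1$, so edges at low-degree vertices can carry spectral weight of order $\sqrt m$. Instead I will prune with respect to the Perron vector $\mathbf x$ (with $\|\mathbf x\|=1$) and the edge-spectral density $\Phi(G)=\lambda(G)/\sqrt{e(G)}$.
\begin{itemize}[leftmargin=*]
\item The Rayleigh quotient with $\mathbf x$ restricted to $V\setminus\{v\}$ gives
\[
\lambda(G-v)\ \ge\ \frac{\lambda-2\lambda x_v^2}{1-x_v^2}.
\]
A vertex $v$ with $d(v)\le \gamma\sqrt m$ has $\lambda x_v=\sum_{u\sim v}x_u\le\sqrt{d(v)}$, which bounds $x_v^2$ in terms of $d(v)/\lambda^2$.
\item I will repeatedly delete such a $\gamma$-deficient vertex, and only when this does not decrease $\Phi$ by more than a factor $(1-O(\gamma)\,d(v)/e)$.
\item I will show that while many edges sit at deficient vertices, some deletion keeps $\Phi$ essentially non-decreasing. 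Theorem~\ref{thm:edge-spec-ESS} caps $\Phi^2$ at $(1-1/r+o(1))2$, and since $\Phi$ cannot grow past this cap, the total number of deleted edges stays at most $\varepsilon m/4$.
\end{itemize}
The output is a subgraph $G_1$ with $e(G_1)\ge(1-\varepsilon/4)m$, minimum degree at least $\gamma\sqrt m$ (hence $n_1:=|G_1|\le 2\sqrt m/\gamma$), and $\lambda^2(G_1)\ge(1-1/r-2\delta)2e(G_1)$. I expect this step to be the main obstacle: controlling the cumulative loss in $\Phi$ is delicate. It is exactly the light-edge and deficient-vertex pruning of \cite{Niki2021,NZ2021b,FLZ}, and I would follow their potential-function bookkeeping.

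\emph{Step 2 (removal).} Since $e(G_1)\le n_1^2/2$, we have $n_1\ge\sqrt m$, so $N_F(G_1)\le N_F(G)=o(m^{f/2})=o(n_1^f)$. The graph removal lemma then gives an $F$-free $G_2\subseteq G_1$ with $e(G_1)-e(G_2)=o(n_1^2)=o_\gamma(m)$. For the deleted edge set $R$, $\lambda(G_1)\le\lambda(G_2)+\lambda(R)$ and $\lambda(R)\le\sqrt{2e(R)}=o(\sqrt m)$. Therefore
\[
\lambda^2(G_2)\ \ge\ \Big(1-\frac1r-3\delta\Big)2e(G_2).
\]

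\emph{Step 3 (stability and transfer).} Apply the $F$-free edge-spectral stability theorem of Li--Liu--Zhang to $G_2$ with parameter $\varepsilon/2$. This gives disjoint sets $A,B$ with $d(G_2,K_{A,B})\le\varepsilon m/2$ when $r=2$, or a set $C$ with $d(G_2,T_{C,r})\le\varepsilon m/2$ when $r\ge3$. Finally, by the triangle inequality for edit distance,
\[
d(G,\cdot)\ \le\ |E(G)\setminus E(G_1)|+|E(G_1)\setminus E(G_2)|+d(G_2,\cdot)\ \le\ \varepsilon m,
\]
with $\delta$ chosen small enough for both parts. Pruned vertices count as outside $A\cup B$ (respectively outside $C$), and their edges are already counted in the first term.
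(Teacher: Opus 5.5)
Your route (prune to $O(\sqrt m)$ vertices, apply the dense removal lemma, invoke the $F$-free stability theorem of Li--Liu--Zhang, and transfer back by the triangle inequality) is the one the paper only alludes to when it says the statement ``also follows from the $F$-free case by using the graph removal lemma''. The paper gives no proof and cites Fang--Lin--Zhai's progressive induction. However, Step~1 cannot work when $r=2$, so part~(a) is not proved. For $r=2$ the hypothesis is only $\lambda^2(G)\ge(1-2\delta)m$. The graphs $K_{1,m}$ and $K_{2,m/2}$ are bipartite (hence $F$-free) with $\lambda^2=m$, and every edge has an endpoint of degree at most $2$. So no subgraph with $(1-\varepsilon/4)m$ edges has minimum degree $\gamma\sqrt m$.

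This obstruction is intrinsic, not an artifact of your pruning rule. The graphs in (a) may be close to $K_{A,B}$ with $|A|=o(\sqrt m)$, so almost all edges live on $|B|\gg\sqrt m$ vertices, and there the dense removal lemma costs $\varepsilon'|V(G)|^2\gg m$ edges. Moreover, the Perron-vector estimate that makes low-degree pruning possible (Lemma~\ref{lem:LLZ}) needs $\lambda^2\ge(1+\delta_0)m$, which is exactly what fails for $r=2$; in the star the centre has $x_v^2=1/2$. Part~(a) therefore needs a genuinely different argument, such as the progressive induction of Fang--Lin--Zhai or a separate treatment of the unbalanced regime that avoids the dense removal lemma.

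For $r\ge3$ the plan is sound, but Step~1 as written does not close. The Cauchy--Schwarz bound $x_v^2\le d(v)/\lambda^2\approx d(v)/\bigl(2(1-\frac1r)e\bigr)$ exceeds the break-even value $d(v)/(2e)$ at which deleting $v$ leaves $\Phi$ unchanged, so it does not exclude a loss in $\Phi$. Once $\Phi$ is allowed to decrease, the cap from Theorem~\ref{thm:edge-spec-ESS} bounds nothing, because that argument needs a definite gain of order $d(v)/m$ at every step.

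The missing ingredient is Lemma~\ref{lem:LLZ}. Since $\Phi^2\ge2(1-\frac1r-\delta)\ge\frac43-2\delta$ and $\Phi$ never decreases, every intermediate graph with at least $m/2$ edges has $\max_u x_u=O(m^{-1/4})$. Then $\lambda x_v\le d(v)\max_u x_u$ yields $x_v^2=O(\gamma)\,d(v)/m$ whenever $d(v)\le\gamma\sqrt m$. Such a vertex is $\beta$-deficient for a fixed $\beta$, and Lemma~\ref{lem:admissible}(ii) gives $\Phi(G-v)-\Phi(G)=\Omega(d(v)/m)$; its proof only uses $\lambda\ge\sqrt m$, so the slightly weaker hypothesis is harmless. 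The cap then limits the total deletion to $O(\delta)m+o(m)$, exactly as in the proof of Claim~\ref{clm:regularization}. With this repair, your Steps~2 and~3 go through and prove~(b).
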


The following lemma says that if  $G$ is an $m$-edge graph with spectral radius bounded away from $\sqrt{m}$, then every coordinate of the Perron--Frobenius eigenvector of $G$ is $O(m^{-1/4})$. 

\begin{lemma}[Li--Liu--Zhang \cite{LLZ-edge-spectral}] \label{lem:LLZ} 
Let $G$ be a graph with $m$ edges and $\bm{x}=(x_v)_{v\in V(G)}$ be the unit Perron--Frobenius eigenvector of $G$.  
 If $\lambda^2 (G) \ge (1+\delta)m$ where $0<\delta \le 0.79$, then 
\[ \max\{x_v : v\in V(G)\} < {\delta^{-4}m^{-1/4} }. \] 
\end{lemma}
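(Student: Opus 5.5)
The plan is to fix a vertex $u$ with $x_u=\max_v x_v$ and compare two expressions for the Perron eigenvector at $u$: the first-order identity $\lambda x_u=\sum_{v\sim u}x_v$ and the two-step identity $\lambda^2x_u=\sum_{w}|N(u)\cap N(w)|\,x_w$. The first identity alone gives, by Cauchy--Schwarz and $\|\bm x\|=1$,
\[
\lambda x_u\le \sqrt{d(u)}\,\Bigl(\sum_{v\sim u}x_v^2\Bigr)^{1/2}\le \sqrt{d(u)},
\qquad\text{so}\qquad x_u\le \frac{\sqrt{d(u)}}{\lambda}\le \frac{\sqrt{d(u)}}{\sqrt{m}}.
\]
Hence we are done whenever $d(u)\le \delta^{-8}\sqrt m$. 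The whole difficulty is therefore a vertex $u$ of degree much larger than $\sqrt m$, and the hypothesis $\lambda^2\ge(1+\delta)m$ has to rule out a large coordinate there.

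For such $u$, I would expand the two-step identity as
\[
\lambda^2x_u=d(u)\,x_u+\sum_{v\sim u}\ \sum_{w\sim v,\,w\ne u}x_w\le d(u)\,x_u+\sum_{vw\in E(G-u)}(x_v+x_w),
\]
where the inequality uses that each walk $u\,v\,w$ with $w\ne u$ uses an edge of $G-u$ with an endpoint in $N(u)$, and each such edge is used at most twice. Write $S=\sum_{vw\in E(G-u)}(x_v+x_w)=\sum_{v\ne u}d_{G-u}(v)\,x_v$. Since $G-u$ has $m-d(u)$ edges, the hypothesis yields
\[
\bigl(\delta m+(m-d(u))\bigr)\,x_u\le S .
\]
If $x_u$ were at least $\delta^{-4}m^{-1/4}$, the left side would exceed the trivial bound $S\le (m-d(u))\,x_u$ by $\delta m\,x_u$. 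So the surplus $\delta m\,x_u$ must be paid for by the other coordinates.

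The plan is then to bound $S$ from above using only $\|\bm x\|=1$. I would split $V\setminus\{u\}$ into the set $L$ of vertices with $d(v)\le\sqrt m$ and the set $H$ of vertices with $d(v)>\sqrt m$; note $|H|\le 2\sqrt m$.

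1. For $v\in L$, apply the first bound: $x_v\le\sqrt{d(v)/m}\le m^{-1/4}$. Edges of $G-u$ meeting $L$ then contribute at most about $2(m-d(u))\,m^{-1/4}$.
2. Edges of $G-u$ inside $H$ number at most $\binom{|H|}{2}\le 2m$. For these I would use Cauchy--Schwarz,
\[
\sum_{v\in H}x_v\le \sqrt{|H|}\le \sqrt2\, m^{1/4},
\]
together with $x_v\le x_u$ for the remaining slack.

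Balancing these against $\delta m\,x_u$ should force $x_u=O(\delta^{-c}m^{-1/4})$. The bound $0<\delta\le0.79$ and the exponent $4$ come from iterating this comparison once or twice and simplifying the constants.

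The main obstacle is the high-degree contribution in step 2. Edges inside $H$ can be as numerous as $\Theta(m)$, so bounding them crudely by $x_u$ loses exactly the $\delta m$ surplus we need. I would first try to replace the count by a weighted sum: write the $H$-internal part of $S$ as $\sum_{v\in H}d_H(v)\,x_v$ and apply Cauchy--Schwarz with $\sum_{v\in H}d_H(v)^2$. Using $\sum_v d(v)^2\le$ a bound coming from $\lambda^2\le\max_v\sum_{w\sim v}d(w)\le 2m$-type estimates, this gives a bound of order $m^{3/4}$, which is below $\delta m\,x_u$ as soon as $x_u\gg\delta^{-1}m^{-1/4}$. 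If that estimate is too lossy, the fallback is to repeat the argument with $u$ replaced by the vertices of $H$ and to bootstrap: first obtain a weak bound such as $x_v\le\delta^{-2}m^{-1/8}$ on all of $H$, then feed it back into the display above to reach $\delta^{-4}m^{-1/4}$.
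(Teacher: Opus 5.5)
\paragraph{Verdict: genuine gap, fixable within your framework.} The paper only quotes this lemma from Li--Liu--Zhang and gives no proof, so your argument has to stand on its own. Your framework is viable: the two-step identity at the maximizing vertex $u$, the split of $V\setminus\{u\}$ at degree $\sqrt m$, and Cauchy--Schwarz on the high-degree part. But the accounting of $S$ is wrong exactly where the proof is decided.

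\textbf{The ``trivial bound''.} It is $S\le 2(m-d(u))x_u$, not $(m-d(u))x_u$. The left side $(\delta m+m-d(u))x_u$ does not exceed it; in $K_n$ the trivial bound is attained. So there is no free surplus $\delta m x_u$.

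\textbf{Item~1 is false.} This is the decisive problem. An edge $vw$ of $G-u$ with $v\in H$, $w\in L$ contributes $x_v+x_w$, and $x_v$ is bounded only by $x_u$, not by $m^{-1/4}$. Such edges can carry almost all of $S$. Take the split graph $K_b\vee\overline{K_N}$ with $b=4\delta^2N$ and $\delta$ small. Then:
\begin{itemize}
\item $\lambda^2\ge(1+\delta)m$ holds, the clique (minus $u$) is $H$, and the independent set is $L$;
\item $u$ lies in the clique with $x_u\ge\frac12\delta^{-1/2}m^{-1/4}$;
\item the $(b-1)N=(1-O(\delta^2))(m-d(u))$ edges between $H$ and $L$ contribute about $(m-d(u))x_u$ to $S$, far more than $2(m-d(u))m^{-1/4}$.
\end{itemize}
So any upper bound for $S$ lacking a term $e(H,L)\,x_u$ is false. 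Your suggested bootstrap does not help either: even $x_v\le\delta^{-2}m^{-1/8}$ on $H$ leaves an $H$--$L$ contribution of order up to $m^{7/8}$. That exceeds $\delta m x_u$ at the target scale $x_u\approx\delta^{-4}m^{-1/4}$.

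\textbf{The bound on $\sum_v d(v)^2$.} Separately, $\sum_v d(v)^2$ is not $O(m^{3/2})$ under the hypothesis, since $u$ itself can have degree $\Theta(m)$. For the part inside $H$ you only need $d_H(v)<|H|\le 2\sqrt m$.

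\paragraph{The repair.} Let the left side pay one copy of $x_u$ for each edge of $G-u$. Since $m-d(u)=e(G-u)$, your inequality reads $\delta m\,x_u\le\sum_{vw\in E(G-u)}(x_v+x_w-x_u)$. If $x_u<2m^{-1/4}$ you are done, so assume $x_u\ge 2m^{-1/4}$. Split the edges of $G-u$ into three kinds.
\begin{itemize}
\item \emph{Edges inside $L$} contribute at most $2m^{-1/4}-x_u\le 0$ each.
\item \emph{An $H$--$L$ edge} contributes at most the weight of its $L$-endpoint, because $x_v\le x_u$. So these edges give at most $e(H,L)\,m^{-1/4}\le m^{3/4}$ in total.
\item \emph{Edges inside $H$}: write $e_H$ for their number. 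They give $\sum_{v\in H}d_H(v)x_v-e_Hx_u\le 2m^{1/4}e_H^{1/2}-e_Hx_u\le m^{1/2}/x_u\le\tfrac12 m^{3/4}$. This uses Cauchy--Schwarz with $\sum_{v\in H}d_H(v)^2\le |H|\cdot 2e_H\le 4\sqrt m\,e_H$, followed by AM--GM.
\end{itemize}
Hence $\delta m x_u\le\tfrac32 m^{3/4}$. Altogether $x_u\le\max\{2,\tfrac{3}{2\delta}\}m^{-1/4}<\delta^{-4}m^{-1/4}$ for $0<\delta\le 0.79$. With this correction your route is a complete one-pass proof, with the better dependence $O(\delta^{-1})$. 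Neither your preliminary degree case split nor any iteration is needed.
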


\subsection{Regularization by deleting edges and vertices}

We will employ an edge-deletion technique that removes edges whose endpoints have small product of Perron weights. 
This technique was developed by Nikiforov \cite{Niki2021} for finding books; it was subsequently applied by Ning and Zhai \cite{NZ2021b} for counting $4$-cycles.  
We start with the following definition. 

\begin{definition}[$\alpha$-light edge]
\label{def:alpha-light}
Let $m=e(G)\ge2$ and let $\alpha>0$. Fix a
non-negative unit Perron vector $\bm{x}$ of $G$. 
An edge
$uv\in E(G)$ is $\alpha$-light with respect to $\bm{x}$ 
if $  x_ux_v\le {\alpha}/{\sqrt m}$. 
\end{definition}

In addition, we need to use a vertex-deletion argument, which removes a deficient vertex with its incident edges. 
The following concept is inspired by the work of Fang, Lin and Zhai \cite{FLZ}.

\begin{definition}[$\beta$-deficient vertex]
\label{def:beta-deficient}
Let $0<\beta<1$, let $m=e(G)$, and fix a
non-negative unit Perron vector $\bm{x}$ of $G$. 
A vertex $u\in V(G)$ is $\beta$-deficient with
respect to $\bm{x}$ if $x_{u}^2\le\beta/2$, 
\[
  2m\,x_{u}^2\le(1-\beta)\,d(u)
  \qquad\text{and}\qquad
  d(u)\le\sqrt{\tfrac{2m}{1-1/r}}.
\]
\end{definition}

The {\it edge-spectral density} of a graph $G$ is defined as 
 $ \Phi(G) := {\lambda(G)}/ {\sqrt{e(G)}}. $  
The following regularization is essentially the same as the reductions used by Nikiforov \cite{Niki2021} by deleting light edges, and by Fang, Lin and Zhai \cite{FLZ} by deleting deficient vertices. This lemma shows that removing a light edge or a deficient vertex (with all its incident edges) increases the edge-spectral density. 

\begin{lemma}[Regularization lemma \cite{Niki2021, FLZ}] 
\label{lem:admissible}
Let  $G$ be an $m$-edge graph with no isolated vertices. Suppose $\lambda(G)^2\ge (1-\frac{1}{r}) 2m$, fix a
non-negative unit Perron vector $\bm{x}$, and let
$\eps >0$.
\begin{enumerate}[label=\textup{(\roman*)}]
\item If $uv$ is an $\alpha$-light  edge with respect to $\bm{x}$, where $2\sqrt2\,\alpha+\eps\le 
{1}/{2} $, 
and $G'$ is obtained by deleting $uv$ and
discarding isolated vertices, then
$\Phi(G')-\Phi(G)\ge \eps/m$.

\item If $u$ is a $\beta$-deficient vertex with respect to $\bm{x}$, where $\eps\le\beta/4$, and $G'$ is obtained by
deleting all edges at $u$, then
$\Phi(G')-\Phi(G)\ge\eps\,d_G(u)/m$.
\end{enumerate}
\end{lemma}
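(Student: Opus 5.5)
Both parts follow from one Rayleigh-quotient comparison. We bound $\lambda(G')$ from below by testing $A(G')$ against the Perron vector $\bm{x}$ of $G$, or its restriction. We then compare $\lambda(G')/\sqrt{e(G')}$ with $\lambda/\sqrt m$. The gain comes from the smaller denominator $\sqrt{e(G')}$ and must beat the loss in the numerator. Throughout we write $\lambda=\lambda(G)$ and use $\lambda^2\ge(1-\frac1r)2m\ge m$, so $\Phi(G)\ge 1$. Discarding isolated vertices changes neither $\lambda$ nor the edge count, so it can be ignored.

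For (i), let $G'=G-uv$. Then
\[
\lambda(G')\ge \bm{x}^{T}A(G')\bm{x}=\lambda-2x_ux_v\ge \lambda-\frac{2\alpha}{\sqrt m}.
\]
Hence
\[
\Phi(G')-\Phi(G)\ge \lambda\Bigl(\frac{1}{\sqrt{m-1}}-\frac1{\sqrt m}\Bigr)-\frac{2\alpha}{\sqrt m\sqrt{m-1}}.
\]
For the gain term we write $\frac{1}{\sqrt{m-1}}-\frac1{\sqrt m}=\frac{1}{\sqrt m\sqrt{m-1}(\sqrt m+\sqrt{m-1})}$ and use $\lambda\ge\sqrt m$. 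This gives a gain of at least $\frac{1}{2\sqrt m\sqrt{m-1}}\ge\frac{1}{2m}$. For the loss term, $m\ge2$ gives $\sqrt{m-1}\ge\sqrt{m/2}$, so the loss is at most $2\sqrt2\,\alpha/m$. The difference is therefore at least $(\frac12-2\sqrt2\alpha)/m\ge\eps/m$, by the hypothesis $2\sqrt2\alpha+\eps\le\frac12$.

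For (ii), let $d=d_G(u)$ and $t=x_u^2$, and test $A(G')$ against $\bm{x}$ with the $u$-coordinate set to $0$. This vector has squared norm $1-t$. Its quadratic form equals $\lambda-2x_u\sum_{v\sim u}x_v=\lambda-2\lambda t$, using the eigenvalue equation at $u$. So
\[
\lambda(G')\ge\lambda\,\frac{1-2t}{1-t}=\lambda\Bigl(1-\frac{t}{1-t}\Bigr),
\]
and since $e(G')=m-d$,
\[
\frac{\Phi(G')}{\Phi(G)}\ge\Bigl(1-\frac{t}{1-t}\Bigr)\Bigl(1-\frac dm\Bigr)^{-1/2}.
\]
The deficiency hypotheses give $t\le\beta/2$ and $t\le(1-\beta)d/(2m)$. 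Together these yield
\[
\frac t{1-t}\le\frac{(1-\beta)d}{2m(1-\beta/2)} = \frac{d}{2m}\Bigl(1-\frac{\beta/2}{1-\beta/2}\Bigr)\le \frac d{2m}\Bigl(1-\frac\beta2\Bigr).
\]
We then use $(1-s)^{-1/2}\ge1+\frac s2+\frac{3s^2}{8}$ with $s=d/m$ and expand the product. The first-order term is at least $\frac{d}{2m}\cdot\frac\beta2=\frac{\beta d}{4m}$. Multiplying by $\Phi(G)\ge1$ gives $\Phi(G')-\Phi(G)\ge\frac{\beta d}{4m}(1-\text{error})$.

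The main obstacle is controlling the second-order cross term, which is of size $O(d^2/m^2)$, against this first-order gain. Here the degree cap $d\le\sqrt{2m/(1-1/r)}$ in the definition of a deficient vertex is used. It gives $d/m=O(m^{-1/2})$, and the $+\frac{3s^2}{8}$ term of the square-root expansion partly absorbs the negative term $-\frac{t}{1-t}\cdot\frac s2$, which is at most $\frac{s^2}{4}$. The remaining relative loss can be covered by the slack in $\Phi(G)\ge\sqrt{2(1-1/r)}$, which is strictly larger than $1$ when $r\ge3$. So the lower bound $\varepsilon d/m$ with $\varepsilon\le\beta/4$ follows once $m$ is large, the regime in which the lemma is applied. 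If needed, we would state explicitly that $m$ is sufficiently large in terms of $r$ and $\beta$.
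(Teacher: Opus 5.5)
Part (i) is the paper's argument; your use of $\lambda\ge\sqrt m$ in place of $\lambda\ge\sqrt{2(1-1/r)m}$ is harmless. Part (ii) also starts as in the paper: the same test vector, the same bound $\lambda(G')\ge\lambda\frac{1-2t}{1-t}$, and the same consequence $a:=\frac{t}{1-t}\le\frac{s}{2}\bigl(1-\frac{\beta}{2}\bigr)$ with $s=d/m$. The weak point is your closing estimate. After passing to the ratio $\Phi(G')/\Phi(G)$ and truncating the Taylor expansion of $(1-s)^{-1/2}$, you leave the ``remaining relative loss'' unquantified. You then propose to absorb it using three things the lemma does not assume: the degree cap, the slack $\sqrt{2(1-1/r)}>1$ (so $r\ge3$), and a new hypothesis that $m$ is large in terms of $r$ and $\beta$. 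As written, this proves a weaker statement than the lemma.

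The second-order obstacle is not real. Since $\sqrt{1-s}\le 1-\frac s2$, you get $\bigl(1+\frac{\beta s}{4}\bigr)\sqrt{1-s}\le 1-\frac s2+\frac{\beta s}{4}\le 1-a$. Hence $\Phi(G')\ge\bigl(1+\frac{\beta s}{4}\bigr)\Phi(G)$ for every $0\le s<1$, and $\Phi(G)\ge1$ gives $\Phi(G')-\Phi(G)\ge\frac{\beta d}{4m}\ge\varepsilon d/m$. Your own expansion also closes once you track it: $\frac38 s^2$ fully, not partly, absorbs $\frac{as}{2}\le\frac{s^2}{4}$, leaving $\frac{s^2}{8}$ to cover the cubic cross term when $s\le\frac23$. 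The paper does the equivalent computation in difference form. It writes $\Phi(G')-\Phi(G)\ge\frac{\lambda}{\sqrt m\sqrt{m-d}}\bigl((1-a)\sqrt m-\sqrt{m-d}\bigr)$ and uses the exact bound $\sqrt m-\sqrt{m-d}\ge\frac{d}{2\sqrt m}$ together with $2mt\le(1-\beta)d$. This gives $\frac{d}{2\sqrt m}\cdot\frac{\beta-t}{1-t}\ge\frac{\beta d}{4\sqrt m}$ directly, and then $\lambda\ge\sqrt m\ge\sqrt{m-d}$ finishes. That proof uses neither the degree cap, nor largeness of $m$, nor $r\ge3$.
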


\begin{proof}
(i) Here $e(G')=m-1$. Since $\bm{x}$ is a unit vector,
\[
  \lambda(G')\ge \bm{x}^TA(G')\bm{x} =\lambda(G)-2x_ux_v,
\]
and discarding isolated vertices changes neither
$\lambda$ nor this bound. Hence
\[
  \Phi(G')-\Phi(G)
  \ge\lambda(G)\Bigl(\frac1{\sqrt{m-1}}
       -\frac1{\sqrt m}\Bigr)
   -\frac{2x_ux_v}{\sqrt{m-1}}.
\]
For the first term, since $  
\frac1{\sqrt{m-1}}-\frac1{\sqrt m} > \frac1{2m^{3/2}}$ 
and $\lambda(G)\ge\sqrt{(1-1/r)2m}$, this term is at
least $\frac{1}{2m}\sqrt{2(1-1/r)}$. 
For the second term, we have  
$\sqrt{m-1}\ge\sqrt m/\sqrt2$, and the
$\alpha$-light bound gives $x_ux_v\le\alpha/\sqrt m$.
Therefore, we have $  \frac{2x_ux_v}{\sqrt{m-1}}
  \le\frac{2\sqrt2\,x_ux_v}{\sqrt m}
  \le\frac{2\sqrt2\,\alpha}{m}$. 
Combining the two estimates, we get 
\[
  \Phi(G')-\Phi(G)
  \ge\frac{\sqrt{2(1\!-\!1/r)}}{2m}
     -\frac{2\sqrt2\,\alpha}{m}
  =\frac1m\Bigl(\frac{\sqrt{2(1\!-\!1/r)}}{2}
                -2\sqrt2\,\alpha\Bigr)
  \ge\frac{\eps}{m}. 
\]
(ii) Write $d:=d_G(u)$, so
$e(G')=m -d$. Note that $\lambda(G)x_{u} = \sum_{v\in N(u)} x_v$ and 
$$Q:=\sum_{ij\in E(G')}2x_ix_j 
= \sum_{ij\in E(G)}2x_ix_j - 2x_u \sum_{v\in N(u)} x_v 
=(1-2x_u^2)\lambda(G).$$ By
the Rayleigh quotient, we have 
\[
  \lambda(G')\ge\frac{Q}{1-x_u^2}
  =\lambda(G)\,\frac{1-2x_u^2}{1-x_u^2}.
\]
Hence
\[
  \Phi(G')-\Phi(G)
  \ge\frac{\lambda(G)}{\sqrt m\,\sqrt{m-d}}
   \left(\Bigl(1-\frac{x_u^2}{1-x_u^2}\Bigr)\sqrt m
        -\sqrt{m-d}\right).
\]
Using $\sqrt m-\sqrt{m-d}\ge \frac{d}{2\sqrt m}$, the
$\beta$-deficiency bound $2m x_u^2\le(1-\beta)d$, and
$x_u^2\le\beta/2$,
\[
  \Bigl(1-\frac{x_u^2}{1-x_u^2}\Bigr)\sqrt m-\sqrt{m-d}
  \ge\frac{d}{2\sqrt m}\cdot\frac{\beta- x_u^2}{1-x_u^2}
  \ge\frac{\beta d}{4\sqrt m}.
\]
Since $\lambda(G)\ge\sqrt{(1\!-\!1/r)2m}\ge\sqrt m$ and
$\sqrt{m-d}\le\sqrt m$, we get 
$ \Phi(G')-\Phi(G)\ge\frac{\beta d}{4m}
  \ge\eps\,\frac dm$. 
\end{proof}

\section{Proof of the sharp linear bound}

\label{sec:main}

In this section, we prove Theorem \ref{thm:linear-small-gap}. To start with, we set  
the constants below so that
\[
  0<\eps_0\ll\eps_1^2\le\eps_1\ll\eps_2\ll\theta\ll\eta,
  \qquad
  0<\delta\ll\theta.
\]
Here $\eta$ is the error allowed in the final bound; $\theta$ is the part of the spectral
gap that the regularization may lose, and $\delta$ the fraction of edges it may delete, so
both are small compared with $\eta$. The three scales $\eps_2\gg\eps_1\gg\eps_0$ control the
structure of the surviving subgraph, from coarse to fine: $\eps_2$ (via $\beta=2\eps_2$) gives
the clean structure that makes the per-edge count sharp; $\eps_1$ controls the balance of the
parts and the uniformity of the Perron vector, and must beat the $\eps_2$-scale degree
deficit; and $\eps_0$ is the stability quality, which is
used in the edge-to-vertex counts and is smaller than $\eps_1^2$. 

Let $\gamma=\gamma(F,r)>0$ be the constant from the coarse local-count Claim~\ref{clm:local-count}.  This claim is proved later, but the constant depends only on $F$ and $r$.  Finally, we choose
\[
  0<C_0=C_0(F,r,\eta)\le \min\left\{1,\tfrac{1}{100}\eps_1\gamma\right\}.
\]
Let $0<C\le C_0$ be any fixed real number.  All later lower thresholds for the size $m$ may depend on $F,r,\eta,C$ and on the above hierarchy, but not on the host graph $G$.

Suppose for the contradiction that there are counterexamples for arbitrarily large $m$.  Thus $G$ is an $m$-edge graph satisfying
$ \lambda(G)\ge \sqrt{(1\!-\! {1}/{r})2m} +C$, 
but
\begin{equation}\label{eq:sharp-counter}
  N_F(G)<
  (1-\eta)\Ksharp\,C m^{(f-1)/2}.
\end{equation}
Since $C$ is fixed, we have $  N_F(G)=O (m^{(f-1)/2})=o(m^{f/2}).$ 

Next, we summarize the key steps of the proof of 
Theorem \ref{thm:linear-small-gap} in Figure \ref{fig:proof-flow-thm-1-4}. 

\begin{figure}[htbp]
\centering
\begin{tikzpicture}[
    node distance=5mm and 12mm,
    box/.style={rectangle, rounded corners, draw, align=center,
                text width=68mm, inner sep=3pt, font=\footnotesize},
    arr/.style={-{Stealth[length=1.8mm]}, semithick}
  ] 
\node[box] (A) {%
\textbf{Setup.} 
  Assume for contradiction that $G$ satisfies 
  $\lambda(G)\ge\sqrt{(1\!-\!1/r)2m}+C$ 
   but $N_F(G)<(1-\eta)\Ksharp Cm^{(f-1)/2}$,   
  so $N_F(G)=o(m^{f/2})$.};

\node[box, below=5mm of A] (B) {%
  \textbf{Step 1. Gap-preserving regularization.}
  Deleting $\alpha$-light edges and $\beta$-deficient vertices, 
 the resulting graph $H$ satisfies $e(H):=h>(1-\delta)m$ and
  $\lambda(H)\ge\sqrt{(1-1/r)2h}+(1-\theta)C$.};

\node[box, below=5mm of B] (C) {%
  \textbf{Step 2. Stability \& Refinement.}
  Spectral stability gives a
  near-balanced partition $V(H)\!=\!V_1\cup\cdots\cup V_r$;
  the refinement empties exceptional sets, so the
  Perron vector is near-uniform,
  $x_v^2\le(1+o(1))/{n}$.};

\draw[->] (A)--(B);  \draw[->] (B)--(C);

\node[box, right=of A,anchor=north,
        at={($(A.north)+(68mm,0)$)}] (D) {%
  \textbf{Step 3. Sharp local count.}
  Each class-edge creates at least $(1-o(1))c(n,F)$
  copies of $F$ having it as their only class-edge;
  distinct class-edges give disjoint families.};

\node[box, below=5mm of D] (E) {%
  \textbf{Step 4. Gap conversion.}
  Deleting all $p$ class-edges leaves an $r$-partite
  graph $H^*$, so $\lambda (H^*)\le\sqrt{(1\!-\!1/r)2(h\!-\!p)}$. Comparing with $\lambda(H)$ gives $p\ge(1 \!-\!o(1))Cn$.};

\node[box, below=5mm of E] (F) {%
\textbf{Conclusion.} 
  Summing over all class-edges yields 
  $N_F(G)\!\ge \!(1 \!- \!o(1))\Ksharp Cm^{(f-1)/2}$,
  contradicting the assumption.
 Adding a matching to the Tur\'{a}n graph shows 
  that $\Ksharp$ is best possible.};

\draw[->] (D)--(E);  \draw[->] (E)--(F);

  \draw[->] (C.east) -| ($(C.east)!0.5!(D.west)$) |- (D.west);
\end{tikzpicture}
\caption{Proof outline of Theorem~\ref{thm:linear-small-gap}.}
\label{fig:proof-flow-thm-1-4}
\end{figure}

\subsection{Regularization with almost the same gap}

\label{sec:regularization}

Fix $\alpha \le \frac{1}{2\sqrt{2}}(\frac{1}{2} - \eps_0)$ and $\beta :=2 \eps_2$, so that Lemma \ref{lem:admissible}  applies. 
We  construct a sequence of graphs $ G_1 \supset G_2 \supset\cdots\supset
  G_{\ell}$ 
as follows. Put $G_1=G$. Given $G_{i-1}$, if $G_{i-1}$ contains an $\alpha$-light edge $uv$, or contains a $\beta$-deficient vertex $u_0$, 
then construct $G_i$ from $G_{i-1}$ 
by deleting the edge $uv$, or deleting the vertex $u_0$  together with its incident edges. 
In either case, we discard any isolated vertices, since this does not change $m$ or $\lambda(G)$ and cannot increase $N_F(G)$.  
In what follows, we prove that this deletion process must terminate at a subgraph, say $G_{\ell}$, and 
the total number of deleted edges is less than 
$\lfloor\delta m\rfloor$. Consequently, $G_{\ell}$ contains neither an $\alpha$-light edge nor a $\beta$-deficient vertex.

Throughout the process, we see that $\Phi$ is non-decreasing by 
Lemma~\ref{lem:admissible}. Since
$\Phi(G)=\lambda(G)/\sqrt m>\sqrt{2(1-1/r)}$, we have
$\Phi(G_i)\ge\sqrt{2(1-1/r)}$; equivalently, 
$\lambda(G_i)^2\ge2(1-\frac1r)e(G_i)$ for every $i$.
Thus, the required hypothesis of
Lemma~\ref{lem:admissible} holds at every deletion step.

\begin{claim} \label{clm:regularization}
The terminated graph $G_{\ell}$ 
satisfies 
\begin{equation}\label{eq:surviving-gap}
  e(G_{\ell})>(1-\delta)m,
  \qquad 
  \lambda(G_{\ell})\ge  \sqrt{\left(1- \tfrac{1}{r}\right) 2e(G_{\ell})}+(1-\theta)C.
\end{equation}
\end{claim}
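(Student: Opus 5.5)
The plan is to track the edge-spectral density $\Phi$ along the deletion sequence, using Lemma~\ref{lem:admissible}. For both types of deletion we use the single parameter $\eps:=\eps_0$. The choice $\alpha\le\frac{1}{2\sqrt2}(\frac12-\eps_0)$ gives $2\sqrt2\,\alpha+\eps_0\le\frac12$, which is the hypothesis of part (i). Since $\eps_0\ll\eps_2$, we also have $\eps_0\le\beta/4=\eps_2/2$, which is the hypothesis of part (ii). We already observed that $\lambda(G_i)^2\ge 2(1-\frac1r)e(G_i)$ holds throughout, so the lemma applies at every step. It shows that each deletion raises $\Phi$ by at least $\eps_0/m_i\ge\eps_0/m$ times the number of edges removed in that step, where $m_i=e(G_{i})\le m$. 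Summing over the steps gives
\[
  \Phi(G_i)\ \ge\ \Phi(G)+\frac{\eps_0\,(m-e(G_i))}{m}.
\]
Discarding isolated vertices does not affect this.

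\emph{Few edges are deleted.} Suppose that at some stage the number of deleted edges reaches $\lfloor\delta m\rfloor$, and let $i$ be the first such index. A single step removes at most one edge or the at most $\sqrt{2m/(1-1/r)}=o(m)$ edges at a deficient vertex, so $e(G_i)\ge(1-2\delta)m$. On the other hand, $e(G_i)\le m-\lfloor\delta m\rfloor$, so the displayed inequality gives $\Phi(G_i)\ge\sqrt{2(1-1/r)}+\eps_0\delta/2$. Squaring, we get $\lambda^2(G_i)\ge(1-\frac1r+c)\,2e(G_i)$ for a constant $c=c(\eps_0,\delta,r)>0$. Moreover $G_i\subseteq G$, so
\[
  N_F(G_i)\le N_F(G)=O(m^{(f-1)/2})=o\bigl(e(G_i)^{f/2}\bigr),
\]
using $e(G_i)\ge(1-2\delta)m$. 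This contradicts Theorem~\ref{thm:edge-spec-ESS} once $m$ is large. Hence fewer than $\lfloor\delta m\rfloor$ edges are ever deleted. Each step deletes at least one edge, so the process terminates at some $G_\ell$ with $h:=e(G_\ell)>(1-\delta)m$, and $G_\ell$ has no $\alpha$-light edge and no $\beta$-deficient vertex.

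\emph{The gap survives.} Monotonicity alone suffices here; the strict increments are not needed. From $\Phi(G_\ell)\ge\Phi(G)\ge\sqrt{2(1-1/r)}+C/\sqrt m$ we get
\[
  \lambda(G_\ell)\ \ge\ \sqrt{2(1-\tfrac1r)h}+C\sqrt{h/m}\ \ge\ \sqrt{2(1-\tfrac1r)h}+\sqrt{1-\delta}\,C,
\]
and $\sqrt{1-\delta}\ge1-\theta$ because $\delta\ll\theta$. This proves \eqref{eq:surviving-gap}.

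The only delicate point is the first part. One must check that the $o(m^{f/2})$ hypothesis of Theorem~\ref{thm:edge-spec-ESS} still holds for the subgraph $G_i$. It does, because $N_F$ is monotone under taking subgraphs and $e(G_i)$ remains linear in $m$. One must also handle a possible overshoot by a single vertex deletion, which removes only $o(m)$ edges and is absorbed into the $2\delta$ slack.
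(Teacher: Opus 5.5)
Your proposal is correct and follows essentially the same route as the paper: monotone growth of $\Phi$ with increments $\eps_0\Delta_i/m$; a contradiction with Theorem~\ref{thm:edge-spec-ESS} at the first stage where $\lfloor\delta m\rfloor$ edges have been deleted; and then monotonicity of $\Phi$ together with $\sqrt{1-\delta}\ge 1-\theta$ to carry the gap. One small point in your favor: you bound the overshoot of a single step using the degree cap $d(u)\le\sqrt{2m/(1-1/r)}$ in the definition of a $\beta$-deficient vertex, which is a cleaner justification than the paper's appeal to $\Delta(G)=O(\sqrt m)$.
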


\begin{proof}
Suppose on the contrary that the process deletes at least $\lfloor\delta m\rfloor$ edges. Let $G_{1}\supset G_{2}\supset\cdots\supset G_{k}$ be the initial segment up to the first index $k$ at which $e(G_{1})-e(G_{k})\ge\lfloor\delta m\rfloor$. Since a single deletion step removes at most $\Delta(G)=O(\sqrt m)$ edges, we also have $e(G_{1})-e(G_{k})\le\lfloor\delta m\rfloor+O(\sqrt m)$.   
 Write
\[  H=G_k,
  \qquad
  h=e(H), \qquad 
  \Delta_i=e(G_i)-e(G_{i+1}).
\]
Since $G_{i+1}$ is obtained from $G_i$ by deleting an $\alpha$-light edge, or deleting a $\beta$-deficient vertex, using Lemma~\ref{lem:admissible}, we have 
\[
  \Phi(G_{i+1})-\Phi(G_i)
  \ge \frac{\eps_0\Delta_i}{e(G_i)}
  \ge \frac{\eps_0\Delta_i}{m}.
\]
Summing over 
the performed deletion steps gives
\[
  \Phi(H)-\Phi(G)\ge \frac{\eps_0(m-h)}m.
\]
The choice of $k$ gives $m-h\ge\lfloor\delta m\rfloor\ge\delta m/2$ for all sufficiently large $m$.  Therefore
\[
  \Phi(H)
  \ge \Phi(G)+\frac{\eps_0\delta}{2}
  \ge \sqrt{2\left(1- \tfrac{1}{r}\right)}+\frac{\eps_0\delta}{3}. 
\]
Set $  \vartheta_0=\big(\sqrt{2(1-1/r)}+\frac{\eps_0\delta}{6}\big)^2-2(1-\frac1r)>0$. 
Then the preceding inequality implies
\[
  \lambda(H)>\sqrt{\left(2\left(1-\tfrac1r \right)+\vartheta_0 \right)h}.
\]
On the other hand, we have $h\ge m-\lfloor\delta m\rfloor-O(\sqrt m)\ge(1-2\delta)m$ and
\[
  N_F(H)\le N_F(G)=O(m^{(f-1)/2})=o(h^{f/2}).
\]
This contradicts the spectral supersaturation in Theorem~\ref{thm:edge-spec-ESS} with parameter $\vartheta_0$.  Thus, the deletion process cannot reach a stage where $\lfloor\delta m\rfloor$ edges have been deleted. So $h>(1-\delta)m$.

Therefore, 
the deletion process stops because $G_{\ell}$ contains no $\alpha$-light edge and no $\beta$-deficient vertex.  
Since $\Phi$ is non-decreasing along the process, 
it follows that 
\[
  \lambda(G_{\ell}) 
  \ge \Phi(G)\sqrt{e(G_{\ell})}
  \ge \Big(\sqrt{2\left( 1- \tfrac{1}{r}\right)}+\tfrac C{\sqrt m}\Big)\sqrt{e(G_{\ell})}.
\]
Because $e(G_{\ell})/m>1-\delta$ and $\delta\ll\theta$,
we get $ C\sqrt{ e(G_{\ell}) /m}\ge (1-\theta)C$. 
This proves \eqref{eq:surviving-gap}. 
\end{proof}

\subsection{Stability and structural refinement}

The structural refinement in this
subsection follows the same approach of Fang,
Lin and Zhai~\cite{FLZ}; we reproduce it,
with minor changes, so that the paper is
self-contained.

Let $H:=G_{\ell}$ be the terminated graph obtained in Section \ref{sec:regularization}.  
Denote $h=e(H)$ and $  n=|V(H)|$. 
By Claim~\ref{clm:regularization}, 
we get 
$  h\ge(1-\delta)m$ 
and
$ \lambda(H)
  \ge \sqrt{(1- {1}/{r}) 2h}+(1-\theta)C.$ 
Also, we have $$ N_F(H)\le N_F(G)< (1- \eta)\kappa_F C m^{(f-1)/2}.$$ 
The hypothesis states $C\le C_0$ for some constant $C_0$; this implies the explicit bound
\[
  N_F(H)
  \le
 (1- \eta)\kappa_F C_0(1-\delta)^{-(f-1)/2}h^{(f-1)/2} = o(h^{f/2}).
\]
Therefore, the supersaturation-stability in  Theorem~\ref{thm:edge-spectral-stability} yields a Tur\'an graph $T_{n',r}$ such that 
\[   V(T_{n',r})\subseteq V(H) 
\quad \text{and} \quad d(H,T_{n',r})\le \eps_0 h. \]
Let $\bm{x}=(x_v)_{v\in V(H)}$  be the unit Perron vector of $H$. 
 The regularization process in Section \ref{sec:regularization} terminates at $H$, so every edge $uv$ satisfies $x_ux_v>\alpha/\sqrt h$ and $H$ has no isolated vertices.   
 
Since  $  \lambda^2(H) \ge \frac43h$, 
Lemma~\ref{lem:LLZ} implies 
$\max_{v\in V(H)} x_v =O(h^{-1/4})$. 
Combining with  
$x_ux_v > \alpha / \sqrt{h}$, we have $\min_{u\in V(H)} x_u = \Omega (h^{-1/4})$, which together with $\sum_{u\in V(H)} x_u^2 =1$ yields $ n=O(\sqrt h)$. Combining with $n\ge \lambda (H) \ge \sqrt{h}$, we obtain 
$n= \Theta (\sqrt{h}). $  
Moreover, we have $ \lambda (H) x_u = \sum_{v\in N_H(u)} x_v  \le d_H(u) \cdot O(h^{-1/4})$. Since $\lambda (H) \ge \sqrt{h}$ and $x_u = \Omega(h^{-1/4})$, we get 
$ d_H(u) =\Omega (\sqrt{h})$ for all $u\in V(H)$. 
In particular, $\delta(H)\ge c_r \sqrt h$ for some constant $c_r>0$.

\begin{claim}\label{clm:n-size}
Let $ \psi=\sqrt{\frac{2h}{1-1/r}}$. Then 
$ (1-\eps_1)\psi\le n\le (1+\eps_1)\psi.$
\end{claim}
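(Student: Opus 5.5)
We need $n$ to be within $(1\pm\eps_1)\psi$, where $\psi=\sqrt{2h/(1-1/r)}$. The plan is to combine the stability structure $d(H,T_{n',r})\le\eps_0 h$ with the spectral bound $\lambda^2(H)\ge(1-\frac1r)2h$. Throughout, we use that $\eps_0\ll\eps_1^2$.

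\emph{Upper bound.} First consider the Turán graph $T_{n',r}$ on $V(T_{n',r})\subseteq V(H)$. Since $H$ and $T_{n',r}$ differ in at most $\eps_0 h$ edges, we have $e(T_{n',r})=(1\pm\eps_0)h$. As $e(T_{n',r})=(1-\frac1r)\frac{n'^2}{2}+O(n')$, this gives $n'=(1\pm 2\eps_0)\psi$.

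Next, let $W:=V(H)\setminus V(T_{n',r})$. Every vertex $w\in W$ has all of its edges lying outside $T_{n',r}$. By the minimum-degree bound $\delta(H)\ge c_r\sqrt h$, each such $w$ contributes at least $c_r\sqrt h$ endpoint incidences to $E(H)\setminus E(T_{n',r})$. Since each edge has two endpoints, this gives
\[
|W|\le \frac{2\eps_0 h}{c_r\sqrt h}=O(\eps_0\sqrt h).
\]
Because $\eps_0\ll\eps_1$, we get $n=n'+|W|\le(1+\eps_1)\psi$.

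\emph{Lower bound.} Trivially $n\ge n'\ge(1-2\eps_0)\psi\ge(1-\eps_1)\psi$, so this direction is immediate.

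The only delicate point is the bound on $|W|$. It needs the minimum degree $\Omega(\sqrt h)$, and this comes from the regularization: there are no $\alpha$-light edges, and Lemma~\ref{lem:LLZ} applies. We derived exactly this just before the claim. We also need that the Turán graph supplied by Theorem~\ref{thm:edge-spectral-stability} has edge count $(1-\frac1r)n'^2/2+O(n')$. This holds because it is balanced.

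If one worries that the constant $c_r$ might interact badly with $\eps_1$, note that $c_r$ depends only on $r$ and $\alpha$, while $\eps_0$ may be chosen after these. As an alternative, the spectral inequality $\lambda(H)\le\lambda(T_{n',r})+\lambda(H\triangle T_{n',r})$ together with $\lambda(H\triangle T_{n',r})\le\sqrt{2\eps_0 h}$ gives a consistent check that $n'$ is tied to $\psi$.
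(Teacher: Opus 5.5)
Your proof is correct and follows the paper's argument essentially step for step. You pin down $n'=(1\pm O(\eps_0))\psi$ from $|e(T_{n',r})-h|\le\eps_0h$, take the lower bound from $n\ge n'$, and for the upper bound use $\delta(H)\ge c_r\sqrt h$ to get $|V(H)\setminus V(T_{n',r})|\le 2\eps_0\sqrt h/c_r$. The paper records the slightly stronger bound $|R|\le\frac14\eps_1^2n'$ (using $\eps_0\ll\eps_1^2$) because it reuses it in Claim~\ref{clm:balanced-partition}; your bound yields this as well.
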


\begin{proof}
 Since  $|e(T_{n',r})-h|\le d(H,T_{n',r})\le\eps_0h$,
we have $(1-\eps_0)h\le e(T_{n',r})\le(1+\eps_0)h$. Writing $n'= \ell r+s$ with $0\le s<r$ gives
$e(T_{n',r})=\frac{r-1}{2r}(n')^2-\frac{s(r-s)}{2r}$, and hence
$$\frac{r-1}{2r}(n')^2-\frac r8\le e(T_{n',r})\le\frac{r-1}{2r}(n')^2.$$ 
Combining these two pairs
of bounds,
\[
  (1-\eps_0)\psi^2\le (n')^2\le (1+\eps_0)\psi^2+O_r(1).
\]
As $\psi\to\infty$, the additive $O_r(1)$ is at most $\eps_0\psi^2$ for all large $h$, so
$$(1-\eps_0)\psi^2\le (n')^2\le(1+2\eps_0)\psi^2.$$
 By $\eps_0\ll\eps_1^2$, this yields
\[
  (1-\eps_1^2/4)\psi\le n'\le (1+\eps_1^2/4)\psi.
\]
Let $R=V(H)\setminus V(T_{n',r})$. Every edge incident with $R$ is absent from $T_{n',r}$
and is counted at most twice in $\sum_{v\in R}d_H(v)$, so
$\eps_0h\ge d(H,T_{n',r})\ge\frac12\sum_{v\in R}d_H(v)\ge\frac12|R|\delta(H)$. With
$\delta(H)\ge c_r \sqrt h$ this gives $|R|\le 2\eps_0\sqrt h/c_r $; since
$n'=\Theta_r(\sqrt h)$ and $\eps_0\ll\eps_1^2$, we obtain $|R|\le \frac{1}{4}{\eps_1^2}n'$.
Therefore
\[
  (1-\eps_1^2)\psi\le n'\le n=n'+|R|\le 
  \big(1+ \tfrac{1}{4}{\eps_1^2} \big)n'\le (1+\eps_1^2)\psi,
\]
which proves the claim.
\end{proof}

Choose a partition
$  V(H)=V_1\cup\cdots\cup V_r$ 
that maximizes $\sum_{i< j} e(V_i,V_j)$.  
The edges in $\bigcup_{i=1}^r H[V_i]$ are called class-edges. The edges between $V_i$ and $V_j$ for some $i\neq j$ are called cross-edges.

\begin{claim}\label{clm:balanced-partition}
We have $  e(H)\ge \frac{r-1}{2r}n^2-3\eps_1^2n^2$, and 
the partition satisfies
\[
  \sum_{i=1}^r e(H[V_i])\le \eps_1^2n^2, \quad \text{and~} \quad 
    \left||V_i|-\frac nr\right|\le 3\eps_1n
  \quad
  \text{for every }i\in[r]. 
\]
\end{claim}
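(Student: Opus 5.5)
The plan is to combine Claim~\ref{clm:n-size} with the stability output $d(H,T_{n',r})\le\eps_0 h$ and the maximality of the partition.

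\textbf{Edge lower bound.} By Claim~\ref{clm:n-size}, $\psi^2\le n^2/(1-\eps_1^2)^2$. Since $h=\frac{r-1}{2r}\psi^2$, this gives
\[
h\ge \frac{r-1}{2r}(1-\eps_1^2)^2n^2\ge \frac{r-1}{2r}n^2-\eps_1^2 n^2,
\]
which is stronger than the stated $3\eps_1^2n^2$ bound.

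\textbf{Class-edges.} Let $U_1,\dots,U_r$ be the parts of $T_{n',r}$, and extend them to a partition of $V(H)$ by putting the leftover set $R$ into $U_1$. Every edge of $H$ inside some $U_i$ is either an edge of $H$ not in $T_{n',r}$, or it touches $R$. Edges of the first kind number at most $\eps_0 h$. For the second kind, the proof of Claim~\ref{clm:n-size} shows that all edges incident with $R$ are absent from $T_{n',r}$, so they are also counted in $d(H,T_{n',r})\le\eps_0 h$. Hence this partition has at most $\eps_0 h$ internal edges. Because $V_1,\dots,V_r$ maximizes the number of cross-edges, it minimizes the number of internal edges, so
\[
\sum_i e(H[V_i])\le \eps_0 h\le \eps_0 n^2\le \eps_1^2 n^2,
\]
using $h\le n^2/2$ and $\eps_0\ll\eps_1^2$.

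\textbf{Balance.} This is the main step. Write $a_i=|V_i|$. Then
\[
h\le \sum_{i<j}a_ia_j+\sum_i e(H[V_i])\le \sum_{i<j}a_ia_j+\eps_1^2n^2.
\]
Combined with the edge lower bound,
\[
\sum_{i<j}a_ia_j\ge \frac{r-1}{2r}n^2-2\eps_1^2n^2.
\]
Next use the identity
\[
\sum_{i<j}a_ia_j=\frac{r-1}{2r}n^2-\frac12\sum_i\Bigl(a_i-\frac nr\Bigr)^2.
\]
It follows that $\sum_i (a_i-n/r)^2\le 4\eps_1^2 n^2$, so each $|a_i-n/r|\le 2\eps_1 n\le 3\eps_1 n$.

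The only delicate point is the bookkeeping for the leftover set $R$. I have to make sure that the edges incident with $R$ are charged to $d(H,T_{n',r})$, which is exactly what the proof of Claim~\ref{clm:n-size} establishes, and that $\eps_0\ll\eps_1^2$ absorbs the constants.
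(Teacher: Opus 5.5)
Your proof is correct. The class-edge step is exactly the paper's argument: extend the colour classes of $T_{n',r}$ by putting $R$ into $U_1$, note that every internal edge lies in $E(H)\setminus E(T_{n',r})$, and use that a max-cut partition minimises the number of class-edges.

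The other two steps take a somewhat different and slightly shorter route.
\begin{itemize}
\item \emph{Lower bound on $e(H)$.} The paper goes through $e(H)\ge e(T_{n',r})-\eps_0h$, $e(T_{n,r})-e(T_{n',r})\le(n-n')n$ and $|R|\le\frac14\eps_1^2n'$, losing up to $3\eps_1^2n^2$. You read the bound off directly from $h=\frac{r-1}{2r}\psi^2$ and lose only $\eps_1^2n^2$.
\item \emph{Balance.} The paper isolates the worst part $V_1$ and applies Cauchy--Schwarz to the other $r-1$ parts, getting $e(H)\le\frac{r-1}{2r}n^2-\frac{r}{2(r-1)}\rho^2+\eps_1^2n^2$. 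You use the exact identity $\sum_{i<j}a_ia_j=\frac{r-1}{2r}n^2-\frac12\sum_i(a_i-\frac nr)^2$, which controls all parts at once. The paper's bound has the better coefficient for the single worst deviation. It can be recovered from your identity, since the remaining deviations sum to $-(a_1-\frac nr)$. Either version comfortably gives $\rho\le3\eps_1n$.
\end{itemize}

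Two corrections to your edge lower bound.
\begin{itemize}
\item \emph{Direction of the inequality.} The displayed inequality $\psi^2\le n^2/(1-\eps_1^2)^2$ points the wrong way: it is an upper bound on $\psi$, so it can only give an upper bound on $h$. What you need is $n\le(1+\eps_1^2)\psi$, i.e.\ $\psi^2\ge n^2/(1+\eps_1^2)^2\ge(1-\eps_1^2)^2n^2$. That does give $h\ge\frac{r-1}{2r}(1-\eps_1^2)^2n^2$, as you wrote.
\item \emph{Which version of Claim~\ref{clm:n-size}.} This step needs the $\eps_1^2$-accurate bound proved \emph{inside} the proof of Claim~\ref{clm:n-size}, not the stated version. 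With only $n\le(1+\eps_1)\psi$ you would lose order $\eps_1n^2$ edges. The first assertion of the claim would then fail, and your balance step would only give $|a_i-\frac nr|=O(\sqrt{\eps_1})\,n$. Relying on the internal estimate is legitimate, since the paper's own proof likewise uses $|R|\le\frac14\eps_1^2n'$ from that proof. You should cite it explicitly as such.
\end{itemize}
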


\begin{proof}
Let $U_1,\ldots,U_r$ be the color classes of $T_{n',r}$ and $R=V(H)\setminus V(T_{n',r})$,
and set $U_1^\ast:=U_1\cup R$ and $U_i^\ast:=U_i$ for $2\le i\le r$. Since each $U_i$ is
independent in $T_{n',r}$ and the vertices of $R$ lie outside $T_{n',r}$, every edge counted
by $\sum_i e(H[U_i^\ast])$ belongs to $E(H)\setminus E(T_{n',r})$; in particular the edges
inside $R$ and between $R$ and $U_1$ all fall in $e(H[U_1^\ast])$. As the sets $U_i^\ast$
partition $V(H)$, these within-part edges are distinct, so
$\sum_{i=1}^r e(H[U_i^\ast])\le d(H,T_{n',r})\le\eps_0h$.
Because the partition $V_1,\ldots,V_r$ maximizes the number of cross-edges, it minimizes the
number of class-edges. Using $h\le\binom n2$ and $\eps_0\ll\eps_1^2$, 
we get 
\[
  \sum_{i=1}^r e(H[V_i])\le\sum_{i=1}^r e(H[U_i^\ast])\le\eps_0h\le\eps_1^2n^2.
\]

We now prove the balance estimate. Put $\rho=\max_{i}\big||V_i|-\tfrac nr\big|$ and assume
$\rho=\big||V_1|-\tfrac nr\big|$. By the Cauchy--Schwarz inequality
$\big(\sum_{i\ge2}|V_i|\big)^2\le(r-1)\sum_{i\ge2}|V_i|^2$, so
$$\sum_{2\le i<j\le r}|V_i||V_j|\le\frac{r-2}{2(r-1)}(n-|V_1|)^2.$$
 Hence
\[
\begin{aligned}
  e(H)
  &\le\sum_{1\le i<j\le r}|V_i||V_j|+\sum_{i=1}^r e(H[V_i])\\
  &\le|V_1|(n-|V_1|)+\frac{r-2}{2(r-1)}(n-|V_1|)^2+\eps_1^2n^2 \\
    &=\frac{r-1}{2r}n^2-\frac{r}{2(r-1)}\rho^2+\eps_1^2n^2,
\end{aligned}
\]
the last equality being the exact identity for $\rho=\big||V_1|-\tfrac nr\big|$.

For a matching lower bound, recall from Claim~\ref{clm:n-size} that
$n-n'=|R|\le\frac{1}{4}\eps_1^2 n'\le\frac{1}{2} \eps_1^2 n$. Building $T_{n,r}$ from $T_{n',r}$ one
vertex at a time adds at most the current vertex count per step, so
$e(T_{n,r})-e(T_{n',r})\le\sum_{t=n'}^{n-1}t\le(n-n')n\le\frac{1}{2}\eps_1^2n^2$. 
Together with
$e(T_{n,r})\ge\frac{r-1}{2r}n^2-\frac r8$ and $\eps_0h\le\eps_1^2n^2$, this gives
\[
  e(H)\ge e(T_{n',r})-\eps_0h\ge\frac{r-1}{2r}n^2-3\eps_1^2n^2.
\]
Comparing the two bounds yields $\frac{r}{2(r-1)}\rho^2\le4\eps_1^2n^2$, hence $\rho<3\eps_1n$.
\end{proof}

In the sequel, we define the exceptional sets 
$S^{(1)},S^{(2)}$ and $W_i$ as follows. 
\begin{itemize}
\item 
For $k\in\{1,2\}$, let $  S^{(k)}=
  \{
  v\in V(H):
  d_H(v)\le \left(1-\frac1r-4\eps_k\right)n \}$; 
  
  \item 
For each $i\in[r]$, we define $ W_i=
  \left\{
  v\in V_i:
  d_{V_i}(v)\ge 4\eps_1n
  \right\}$; 
  
  \item 
  For $k\in\{1,2\}$, we denote 
$  V_i^{(k)}=V_i\setminus(W_i\cup S^{(k)})$. 
  \end{itemize}
Since $\eps_1<\eps_2$, we have $S^{(2)}\subseteq S^{(1)}$ and $V_i^{(1)}\subseteq V_i^{(2)}$. 
The set $S^{(2)}$ keeps
the set $V_i^{(2)}$ large for the counting arguments, while the set $S^{(1)}$ isolates
the nearly-maximal-degree vertices on which the Perron vector is almost constant; the gap
$\eps_1\ll\eps_2$ forces the $\eps_2$-scale degree deficit in Claim~\ref{clm:S-empty}. 

\begin{claim}\label{clm:S-small}
For each $k=1,2$, we have  $  |S^{(k)}|\le \eps_1n.$
\end{claim}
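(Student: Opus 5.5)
Since $S^{(2)}\subseteq S^{(1)}$, it suffices to bound $|S^{(1)}|$, and I would do this by double counting edges against the lower bound on $e(H)$ from Claim~\ref{clm:balanced-partition}. Put $s:=|S^{(1)}|$. Every vertex of $S^{(1)}$ has degree at most $(1-\frac1r-4\eps_1)n$. Every other vertex $v\in V_i$ has degree at most $n-|V_i|+d_{V_i}(v)$. Summing these bounds and using $\sum_v d_{V_i(v)}(v)=2\sum_i e(H[V_i])\le 2\eps_1^2 n^2$, I get
\[
 2e(H)\le \sum_{i=1}^r |V_i|(n-|V_i|) + 2\eps_1^2n^2 - \sum_{v\in S^{(1)}}\bigl(n-|V_{i(v)}|-(1-\tfrac1r-4\eps_1)n\bigr),
\]
where the subtracted term records the deficit of each low-degree vertex relative to the trivial cross bound.

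The first step is to control $\sum_i|V_i|(n-|V_i|)$. This sum is at most $(1-\frac1r)n^2$, because the balanced split maximizes it; concavity gives exactly this. The second step is to bound the deficit of each $v\in S^{(1)}$. By the balance estimate $\bigl||V_i|-\frac nr\bigr|\le 3\eps_1 n$, we have $n-|V_i|\ge (1-\frac1r)n-3\eps_1 n$, so each such $v$ contributes a deficit of at least $(4\eps_1-3\eps_1)n=\eps_1 n$. This gives
\[
 2e(H)\le (1-\tfrac1r)n^2+2\eps_1^2n^2-s\,\eps_1 n .
\]

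The third step combines this with $2e(H)\ge (1-\frac1r)n^2-6\eps_1^2n^2$ from Claim~\ref{clm:balanced-partition}. The result is $s\eps_1 n\le 8\eps_1^2n^2$, that is, $s\le 8\eps_1 n$.

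The main obstacle is constant-chasing: this argument yields $8\eps_1 n$, not $\eps_1 n$. A deficit margin of only $\eps_1 n$ against errors of order $\eps_1^2n^2$ costs a factor of about $8$. To reach the stated constant I would redo the argument with the finer estimates that are actually available. First, $\sum_i e(H[V_i])\le\eps_0 h$, which is far smaller than $\eps_1^2n^2$. Second, the bound $\sum_i|V_i|(n-|V_i|)\le (1-\frac1r)n^2-\frac{r}{r-1}\rho^2$ absorbs the imbalance. Third, the lower bound on $e(H)$ sharpens to $(1-\frac1r)\frac{n^2}{2}-O(\eps_0 n^2)-O(|R|n)$, using $|R|\le 2\eps_0\sqrt h/c_r$. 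With these, the losses become of order $\sqrt{\eps_0}\,n^2$ or $\eps_0 n^2$ rather than $\eps_1^2 n^2$. The deficit per vertex of $S^{(1)}$ remains roughly $4\eps_1 n$ minus the balance error, which is now $O(\sqrt{\eps_0})n$. This gives $s=O(\eps_0/\eps_1)\,n\le \eps_1 n$, since $\eps_0\ll\eps_1^2$. If tracking those error terms proves delicate, the fallback is to note that a bound of the form $|S^{(k)}|=O(\eps_1)n$ is all that later counting arguments should need.
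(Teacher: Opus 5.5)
Your argument is correct in its refined form, and it takes a genuinely different route from the paper.

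\textbf{The paper's route.} The paper does not use the partition $V_1,\dots,V_r$ here. It deletes $\lfloor\eps_1 n\rfloor$ vertices of $S^{(k)}$ and observes that each deletion gains roughly $4\eps_1 n$ relative to the Tur\'an count; comparing with the balanced $T_{N,r}$ means any imbalance of the parts costs nothing. So the remaining graph exceeds $e(T_{N,r})$ by $\tfrac14\eps_1^2n^2$. Erd\H{o}s--Simonovits supersaturation then gives $\Omega(h^{f/2})$ copies of $F$, contradicting $N_F(H)=o(h^{f/2})$.

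\textbf{Your route.} You do a direct degree count against the max-cut partition, with no supersaturation and no further appeal to the $F$-count. The price is that your per-vertex deficit is measured against $n-|V_i|$, so it loses the imbalance $\rho\le 3\eps_1 n$. That is exactly why the $\eps_1^2$-scale inputs only give $8\eps_1 n$.

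\textbf{Your repair.} It works, and is simpler than you suggest. Use $\sum_i e(H[V_i])\le\eps_0 h$ and $2e(H)\ge 2e(T_{n',r})-2\eps_0h\ge(1-\tfrac1r)(n^2-2n|R|)-\tfrac r4-\eps_0n^2$, where $|R|\le 2\eps_0\sqrt h/c_r\le\sqrt2\,\eps_0 n/c_r$. Then every loss is $O(\eps_0 n^2/c_r)$. There is no $\sqrt{\eps_0}\,n^2$ term, and if there were, your conclusion $s=O(\eps_0/\eps_1)n$ would not follow. The already known deficit of at least $\eps_1 n$ per vertex of $S^{(1)}$ then gives $s\le(3+2\sqrt2/c_r)(\eps_0/\eps_1)\,n\le\eps_1 n$ directly, without re-deriving the balance.

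\textbf{Comparison.} Your bound is stronger than the claim, but it needs the $\eps_0$-scale estimates inside the proofs of Claims~\ref{clm:n-size} and~\ref{clm:balanced-partition}, together with $\eps_0\ll c_r\eps_1^2$. The paper's argument uses only the stated lower bound $e(H)\ge\frac{r-1}{2r}n^2-3\eps_1^2n^2$.

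Drop the fallback: a bound $|S^{(k)}|=O(\eps_1)n$ does not prove the claim as stated.
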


\begin{proof}
Suppose, to the contrary, that $|S^{(k)}|>\eps_1n$ for some $k\in\{1,2\}$. Pick
$S'\subseteq S^{(k)}$ with $|S'|=s=\lfloor\eps_1n\rfloor$, let $J=H[V(H)\setminus S']$, and write
$N=|J|=n-s$. Every $v\in S'$ satisfies $d_H(v)\le(1-\frac1r-4\eps_1)n$: this is the definition
when $k=1$, and follows from $S^{(2)}\subseteq S^{(1)}$ when $k=2$.

By Claim~\ref{clm:balanced-partition}, $e(H)\ge\frac{r-1}{2r}n^2-3\eps_1^2n^2$. Deleting $S'$
removes at most $\sum_{v\in S'}d_H(v)$ edges (an edge inside $S'$ is counted twice in this sum,
which only makes it larger, so the bound stays valid), so
$e(J)\ge e(H)-s(1-\frac1r-4\eps_1)n$. Using $e(T_{N,r})\le\frac{r-1}{2r}(n-s)^2$ and
$\eps_1n-1\le s\le\eps_1n$,
\[
\begin{aligned}
  e(J)-e(T_{N,r})
  \ge -3\eps_1^2n^2+4\eps_1sn-\tfrac{r-1}{2r}s^2 
  \ge \tfrac{r+1}{2r}\eps_1^2n^2-4\eps_1n
  \ \ge\ \tfrac14\eps_1^2n^2
\end{aligned}
\]
for all large $n$, since $r\ge3$. As $N\le n$, this gives $e(J)\ge e(T_{N,r})+\frac18\eps_1^2N^2$.

Applying Lemma~\ref{lem:ES-supersaturation} with parameter $\frac18\eps_1^2$ gives a constant
$b=b(F,r,\eps_1)>0$ with $N_F(J)\ge bN^f$. Since $N\ge(1-\eps_1)n$ and $h=\Theta(n^2)$, we get
$N_F(H)\ge N_F(J)\ge b'h^{f/2}$ for some $b'=b'(F,r,\eps_1)>0$. On the other hand,
$N_F(H)\le  N_F (G)<(1-\eta)\kappa_FCm^{(f-1)/2}$, and $h\ge(1-\delta)m$ gives
$ N_F (H)=O(h^{(f-1)/2})=o(h^{f/2})$, a contradiction. Hence
$|S^{(k)}|\le\eps_1n$ for each $k=1,2$.
\end{proof}

\begin{claim}\label{clm:W-small}
We have
\[
  \sum_{i=1}^r |W_i|\le \eps_1n.
\]
\end{claim}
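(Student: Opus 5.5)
The plan is a double count of class-edges. By Claim~\ref{clm:balanced-partition}, $\sum_{i=1}^r e(H[V_i])\le \eps_1^2 n^2$. Every vertex $v\in W_i$ satisfies $d_{V_i}(v)\ge 4\eps_1 n$. Summing internal degrees over $W_i$ counts each class-edge inside $V_i$ at most twice, so
\[
  4\eps_1 n\,|W_i|\le \sum_{v\in W_i} d_{V_i}(v)\le 2\,e(H[V_i]).
\]
Summing over $i\in[r]$ gives
\[
  4\eps_1 n\sum_{i=1}^r |W_i|\le 2\sum_{i=1}^r e(H[V_i])\le 2\eps_1^2 n^2,
\]
and therefore $\sum_i |W_i|\le \tfrac12\eps_1 n\le \eps_1 n$.

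This argument does not use the maximality of the partition beyond what Claim~\ref{clm:balanced-partition} already provides, nor any property of $F$. So I expect no real obstacle. The only point to check is that the class-edge bound from Claim~\ref{clm:balanced-partition} is quadratic in $\eps_1$, namely $\eps_1^2n^2$. This is precisely why the hierarchy $\eps_0\ll\eps_1^2$ was chosen, and it leaves a spare factor of $2$.

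If a sharper bound were ever needed later, one could also use maximality of the partition. It gives $d_{V_i}(v)\le d_{V_j}(v)$ for all $j$, so vertices of $W_i$ have high degree into every part. Combined with Claim~\ref{clm:S-small}, this would constrain $W_i$ further, but it is not needed for the stated bound.
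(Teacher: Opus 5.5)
Your proposal is correct and is essentially the paper's own argument: it bounds $e(H[V_i])=\tfrac12\sum_{v\in V_i}d_{V_i}(v)\ge 2\eps_1 n|W_i|$ for each $i$, sums over $i$, and uses $\sum_i e(H[V_i])\le\eps_1^2n^2$ from Claim~\ref{clm:balanced-partition} to get $\sum_i|W_i|\le\tfrac12\eps_1 n\le\eps_1 n$.
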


\begin{proof}
For each $i$, we have 
\[
  e(H[V_i])
  =
  \frac12\sum_{v\in V_i}d_{V_i}(v)
  \ge
  \frac12\sum_{v\in W_i}d_{V_i}(v)
  \ge
  2\eps_1n|W_i|.
\]
By Claim~\ref{clm:balanced-partition}, we get 
$2\eps_1n\sum_{i=1}^r |W_i| \le 
  \sum_{i=1}^r e(H[V_i])\le \eps_1^2n^2$. 
So $ \sum_{i=1}^r |W_i|
  \le
  \frac12\eps_1n
  \le
  \eps_1n.$ 
\end{proof}

We use a common-neighborhood estimate.  For a set $A\subseteq V(H)$ and a vertex $u$, let
\[
  N_A(u)=N_H(u)\cap A.
\]

\begin{claim}\label{clm:common-neighborhood}
Fix $k\in\{1,2\}$, $i_0\in[r]$, and $0\le t\le f$. 
\begin{enumerate}[label=\textup{(\roman*)}]
\item If $u_1,\ldots,u_t\in\bigcup_{i\ne i_0}V_i^{(k)}$, then
  $\big|\bigcap_{j=1}^t N_{V_{i_0}^{(k)}}(u_j)\big|\ge\big(\tfrac1r-4f^2\eps_2\big)n$.
\item If in addition $u_0\in\bigcup_{i\ne i_0}(W_i\setminus S^{(k)})$, then
  $\big|N_{V_{i_0}^{(k)}}(u_0)\cap\bigcap_{j=1}^t N_{V_{i_0}^{(k)}}(u_j)\big|\ge\tfrac{n}{2r^2}$.
\end{enumerate}
\end{claim}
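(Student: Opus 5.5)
For (i), I will use a missing-neighbour count. Each vertex $u_j$ lies outside $S^{(k)}$, so $d_H(u_j)>(1-\frac1r-4\eps_k)n$. It also lies in some $V_i$ with $i\ne i_0$ and outside $W_i$, so $d_{V_i}(u_j)<4\eps_1 n$.

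The total number of vertices outside $V_i$ is $n-|V_i|\le (1-\frac1r+3\eps_1)n$ by Claim~\ref{clm:balanced-partition}. The neighbours of $u_j$ outside $V_i$ number more than $(1-\frac1r-4\eps_k-4\eps_1)n$. Hence the number of non-neighbours of $u_j$ in $V(H)\setminus V_i$ is less than $(4\eps_k+7\eps_1)n\le 11\eps_2 n$. In particular, $u_j$ has fewer than $11\eps_2 n$ non-neighbours in $V_{i_0}$.

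Summing over the $t\le f$ vertices, the common neighbourhood inside $V_{i_0}$ has size at least $|V_{i_0}|-11f\eps_2 n\ge (\frac1r-3\eps_1-11f\eps_2)n$. Finally I remove $W_{i_0}\cup S^{(k)}$, which by Claims~\ref{clm:S-small} and~\ref{clm:W-small} has size at most $2\eps_1 n$. This leaves at least $(\frac1r-5\eps_1-11f\eps_2)n\ge(\frac1r-4f^2\eps_2)n$, using $\eps_1\ll\eps_2$ and $f\ge 4$ (so $11f+1\le 4f^2$).

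For (ii), the new vertex $u_0\in W_i\setminus S^{(k)}$ still has large degree, but it may have many neighbours inside its own part $V_i$. So I cannot bound its non-neighbours in $V_{i_0}$ directly, and I need a global argument.

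\textbf{Step 1: a vertex of large degree sees a constant fraction of every other part.} I will show that any $u_0\notin S^{(k)}$ with $u_0\in V_i$ satisfies $d_{V_{i'}}(u_0)\ge d_{V_i}(u_0)$ for all $i'$. This holds because the partition maximizes the number of cross-edges: otherwise moving $u_0$ into $V_{i'}$ would increase that number.

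Summing this over the $r$ parts gives $d_{V_{i'}}(u_0)\ge d_{V_i}(u_0)$ and $r\cdot\max_{i'} d_{V_{i'}}(u_0)\ge d_H(u_0)$. I expect the pieces to combine into a bound of the form
\[
d_{V_{i_0}}(u_0)\ge d_H(u_0)-\sum_{i'\ne i_0} d_{V_{i'}}(u_0).
\]
The degrees into the other parts $V_{i'}$ with $i'\ne i,i_0$ are at most $|V_{i'}|\le(\frac1r+3\eps_1)n$ each. The degree into the own part $V_i$ is at most $d_{V_i}(u_0)\le d_{V_{i_0}}(u_0)$. Hence
\[
2d_{V_{i_0}}(u_0)\ge (1-\tfrac1r-4\eps_k)n-(r-2)(\tfrac1r+3\eps_1)n=(\tfrac1r-O(r\eps_2))n,
\]
so $d_{V_{i_0}}(u_0)\ge(\frac1{2r}-O(\eps_2))n$.

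\textbf{Step 2: conclude.} After removing $W_{i_0}\cup S^{(k)}$ (at most $2\eps_1 n$ vertices), $u_0$ still has at least $(\frac1{2r}-O(\eps_2))n$ neighbours in $V_{i_0}^{(k)}$. By the computation in (i), all but at most $11f\eps_2 n$ vertices of $V_{i_0}^{(k)}$ are common neighbours of $u_1,\dots,u_t$. The intersection therefore has size at least $(\frac1{2r}-O(f\eps_2))n\ge \frac{n}{2r^2}$, since $r\ge3$ gives $\frac1{2r}-\frac1{2r^2}\ge\frac1{3r}$, a constant margin that absorbs the error.

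The main obstacle is Step 1. The max-cut property has to be used carefully to show that a vertex of large internal degree still sends a constant fraction of its edges into $V_{i_0}$. The remaining steps are bookkeeping with the sizes from Claims~\ref{clm:balanced-partition}, \ref{clm:S-small} and~\ref{clm:W-small}.
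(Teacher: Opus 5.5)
Your proof is correct and follows the paper's route. Part (i) is the same degree-count and union bound over non-neighbours; the paper phrases it via $|A_1\cap\cdots\cap A_t|\ge\sum_j|A_j|-(t-1)|V_{i_0}|$. Part (ii) uses the same local optimality of the max-cut partition, except that you compare $d_{V_i}(u_0)$ directly with $d_{V_{i_0}}(u_0)$ rather than with the average $\frac1r d_H(u_0)$. This gives the stronger intermediate bound $d_{V_{i_0}}(u_0)\ge(\frac1{2r}-O(\eps_2))n$ in place of the paper's $(\frac1{r^2}-2f\eps_2)n$. The hedged sentence about ``summing over the $r$ parts'' is superfluous and can be dropped.
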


\begin{proof}
Throughout we use, from Claim~\ref{clm:balanced-partition}, that
$(\frac1r-3\eps_1)n\le|V_i|\le(\frac1r+3\eps_1)n$ for all $i$, and from
Claims~\ref{clm:S-small} and~\ref{clm:W-small} that $|W_{i_0}\cup S^{(k)}|\le2\eps_1n$;
since $\eps_1\ll\eps_2$, both $3\eps_1$ and $2\eps_1$ are below $2\eps_2$.

(i) For $t=0$, we have $|V_{i_0}^{(k)}|\ge(\frac1r-3\eps_1)n-2\eps_1n\ge(\frac1r-4f^2\eps_2)n$.
Let $1\le t\le f$ and write $u_j\in V_{i_j}^{(k)}$ with $i_j\ne i_0$. As
$u_j\notin W_{i_j}\cup S^{(k)}$, we have $d_{V_{i_j}}(u_j)<4\eps_1n<2\eps_2n$ and
$d_H(u_j)>(1-\frac1r-4\eps_2)n$. Using $|V_i|<(\frac1r+2\eps_2)n$ and $f\ge r+1$, we have 
\[
  d_{V_{i_0}}(u_j)=d_H(u_j)-d_{V_{i_j}}(u_j)-\sum_{i\notin\{i_0,i_j\}}d_{V_i}(u_j)
>\Big(\tfrac1r-2(r+1)\eps_2\Big)n\ge\Big(\tfrac1r-2f\eps_2\Big)n.
\]
By the bound $|A_1\cap\cdots\cap A_t|\ge\sum_j|A_j|-(t-1)|\Omega|$ for subsets of $\Omega=V_{i_0}$, we get 
\[
  \Big|\bigcap_{j=1}^t N_{V_{i_0}}(u_j)\Big|
  >t\Big(\tfrac1r-2f\eps_2\Big)n-(t-1)\Big(\tfrac1r+2\eps_2\Big)n
  \ge\Big(\tfrac1r-3f^2\eps_2\Big)n.
\]
Removing the at most $2\eps_1n\le2\eps_2n$ vertices of $W_{i_0}\cup S^{(k)}$ restricts the
intersection to $V_{i_0}^{(k)}$ at a cost of $2\eps_2n$, and $3f^2+2\le4f^2$; this gives~(i).

(ii) Let $u_0\in W_{i'}\setminus S^{(k)}$ with $i'\ne i_0$, and set
$B=\bigcap_{j=1}^t N_{V_{i_0}^{(k)}}(u_j)$ (so $B=V_{i_0}^{(k)}$ when $t=0$); by (i),
$|B|\ge(\frac1r-4f^2\eps_2)n$. Since the partition maximizes the number of cross-edges, moving
$u_0$ to any other part cannot increase that number, so $d_{V_{i'}}(u_0)\le d_{V_j}(u_0)$ for all
$j$; summing gives $d_H(u_0)\ge r\,d_{V_{i'}}(u_0)$, i.e.\ $d_{V_{i'}}(u_0)\le\frac1r d_H(u_0)$.
With $d_H(u_0)>(1-\frac1r-4\eps_2)n$ and $|V_i|<(\frac1r+2\eps_2)n$,
\[
  d_{V_{i_0}}(u_0)\ge\Big(1-\tfrac1r\Big)d_H(u_0)-(r-2)\Big(\tfrac1r+2\eps_2\Big)n
  >\Big(\tfrac1{r^2}-2f\eps_2\Big)n,
\]
using $f\ge r+1$. Since $B\subseteq V_{i_0}^{(k)}$ we have
$N_{V_{i_0}^{(k)}}(u_0)\cap B=N_{V_{i_0}}(u_0)\cap B$. Then 
\[
  \Big|N_{V_{i_0}^{(k)}}(u_0)\cap B\Big|\ge d_{V_{i_0}}(u_0)+|B|-|V_{i_0}|
  \ge\Big(\tfrac1{r^2}-(4f^2+2f+2)\eps_2\Big)n\ge 
  \tfrac{n}{2r^2},
\]
the last step by the choice of $\eps_2$. This proves (ii).
\end{proof}

For a class-edge $e$ of $H$, let $N_F(H, e)$ be the number of
copies of $F$ in $H$ whose image-edge set contains exactly one
class-edge, namely $e$. The families of copies counted by $N_F(H, e)$, over all distinct $e$, are pairwise disjoint, since each copy has exactly one class-edge in its image-edge set.

\begin{claim}\label{clm:local-count}
There is a constant $\gamma=\gamma(F,r)>0$ such that
$N_F(H,uv)\ge\gamma\,c(n,F)$ whenever $uv\in E(H[V_i])$
with $u\in V_i\setminus S^{(2)}$ and $v\in V_i^{(2)}$.
\end{claim}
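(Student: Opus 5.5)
We will prove the claim by a greedy embedding of $F$ into $H$ that mirrors the structure of a copy of $F$ in $T_{n,r}$ plus one edge. Since $F$ is color-critical, it has an edge $ab$ such that $F-ab$ admits a proper $r$-coloring $\phi: V(F)\to[r]$ with $\phi(a)=\phi(b)$. Put $\phi(a)=\phi(b)=i$ and relabel the remaining colour classes so that they are indexed by $[r]\setminus\{i\}$. We map $a\mapsto u$ and $b\mapsto v$. The other $f-2$ vertices of $F$ are embedded one at a time, each vertex $w$ of colour $j$ going into $V_j^{(2)}$ for $j\ne i$, or into $V_i^{(2)}$ for $j=i$. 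We require the image of $w$ to be adjacent in $H$ to the images of all previously embedded $F$-neighbours of $w$ and distinct from earlier images. Because every vertex other than $u,v$ lies in a distinct $V_j^{(2)}$ matching its colour, and edges of $F-ab$ join different colour classes, the resulting copy of $F$ uses exactly one class-edge, namely $uv$. This holds provided no edge of $F$ inside a colour class is created, which is automatic since $\phi$ is proper on $F-ab$. So each such embedding is counted in $N_F(H,uv)$.

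The main step is to bound below the number of choices at each stage, using Claim~\ref{clm:common-neighborhood}. Here the main obstacle is that $u$ may lie in $W_i$, so $u$ need not be a ``good'' vertex. We handle it as follows.

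\emph{Step 1: vertices of colour $j\ne i$.} A vertex $w$ of colour $j\ne i$ must be adjacent to at most $f$ already embedded vertices. These lie in parts other than $V_j$, and at most one of them, namely $u$, lies outside $\bigcup V^{(2)}$. Indeed $u\in V_i\setminus S^{(2)}$, so either $u\in V_i^{(2)}$ or $u\in W_i\setminus S^{(2)}$. In the first case Claim~\ref{clm:common-neighborhood}(i) gives at least $(\frac1r-4f^2\eps_2)n$ candidates; in the second case part (ii) gives at least $\frac{n}{2r^2}$. Subtracting the at most $f$ already-used vertices, we still have at least $\frac{n}{4r^2}$ choices.

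\emph{Step 2: vertices of colour $i$.} A vertex $w$ of colour $i$ other than $a,b$ has $F$-neighbours only in colours $\ne i$. All of those are embedded in sets $V_j^{(2)}$ (never at $u$), so part (i) with $i_0=i$ applies directly. To ensure this, we embed all vertices of colours $\ne i$ first, and then the remaining colour-$i$ vertices.

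\emph{Step 3: counting.} The number of embeddings is at least $(\frac{n}{4r^2})^{f-2}$. Each copy of $F$ is counted at most $|\Aut(F)|\le f!$ times, or more precisely at most $f!$ times over choices of labelled embeddings. Hence
\[
N_F(H,uv)\ge \frac{1}{f!}\Big(\frac{n}{4r^2}\Big)^{f-2}.
\]
By Lemma~\ref{lem:Mub2010-c} we have $c(n,F)<2\alpha_F n^{f-2}$, so $\gamma:=\big(2\alpha_F f!(4r^2)^{f-2}\big)^{-1}$ works. This constant depends only on $F$ and $r$.

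Two points need checking. First, the edge $ab$ must be mapped so that $v$, not $u$, is the endpoint required to be in $V_i^{(2)}$; this is harmless by symmetry of $ab$. Second, the parts $V_j^{(2)}$ must be nonempty and large, which follows from Claims~\ref{clm:S-small}, \ref{clm:W-small} and \ref{clm:balanced-partition}.
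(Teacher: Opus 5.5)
Your proposal is correct and follows essentially the same route as the paper. Both use the colour-critical template with $ab\mapsto uv$, a greedy extension into the sets $V_j^{(2)}$ driven by Claim~\ref{clm:common-neighborhood}(i) (or (ii) when $u\in W_i\setminus S^{(2)}$), and a final division by $|\Aut(F)|$ combined with $c(n,F)<2\alpha_F n^{f-2}$. The differences are cosmetic: you embed vertex by vertex and require adjacency only to already-embedded $F$-neighbours, so your ordering remark in Step~2 is not actually needed, whereas the paper chooses whole colour classes $L_j^{*}$ inside full common neighbourhoods and counts with binomial coefficients.
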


\begin{proof}
We build many copies of $F$ by mapping a fixed template
into the parts $V_1,\dots,V_r$, sending the unique
within-class edge to $uv$ and every other edge to a
cross-edge.

\emph{Template.} By symmetry take $i=1$. Since $F$ is
color-critical and deleting an edge drops $\chi$ by at most
one, it has an edge $xy$ with $\chi(F-xy)=r$. Fix a proper
$r$-coloring of $F-xy$; it uses all $r$ colors, and $x,y$
get the same color, say color $1$ (otherwise the coloring
would stay proper after restoring $xy$, contradicting
$\chi(F)=r+1$). Let $L_1,\dots,L_r$ be the color classes,
with $x,y\in L_1$. Then $xy$ is the only edge of $F$ inside
a class, so mapping each $L_j$ into $V_j$ and $xy$ to a
class-edge of $H[V_1]$ turns every other edge of $F$ into a
cross-edge.

\emph{Choosing $L_1^{*}$.} We count copies with
$x\mapsto u$ and $y\mapsto v$ (hence $xy\mapsto uv$) whose
only class-edge is $uv$; counting only this assignment
already gives a lower bound. Pick the remaining $|L_1|-2$
vertices of $L_1$ inside $V_1^{(2)}\setminus\{u,v\}$. By
Claims~\ref{clm:balanced-partition},~\ref{clm:S-small}
and~\ref{clm:W-small}, we have
$|V_1^{(2)}|\ge(\tfrac1r-3\eps_1)n-2\eps_1n\ge\tfrac n{2r}$,
so $|V_1^{(2)}\setminus\{u,v\}|\ge\lfloor n/(3r)\rfloor$
for large $n$, giving at least
$\binom{\lfloor n/(3r)\rfloor}{|L_1|-2}$ choices (read as
$1$ when $|L_1|=2$). Write $L_1^{*}$ for the resulting set,
which contains $u$ and $v$.

\emph{Greedy extension.} Build $L_2^{*},\dots,L_r^{*}$ one
class at a time. Suppose $L_1^{*},\dots,L_{j-1}^{*}$ are
chosen; they have at most $f$ vertices, and none lies in
$V_j^{(2)}$ since they sit in $V_1,\dots,V_{j-1}$. If
$u\in V_1^{(2)}$, then all chosen vertices lie in their own
$V_i^{(2)}$ and Claim~\ref{clm:common-neighborhood}(i)
applies; if $u\in W_1\setminus S^{(2)}$, then
Claim~\ref{clm:common-neighborhood}(ii) applies with
$u_0=u$ and the other chosen vertices as $u_1,\dots,u_t$.
Either way, by the choice of $\eps_2$, these vertices have
at least $\tfrac n{2r^2}$ common neighbors in $V_j^{(2)}$,
each adjacent to every earlier vertex; so
$L_j^{*}\subseteq V_j^{(2)}$ can be chosen in at least
$\binom{\lfloor n/(2r^2)\rfloor}{|L_j|}$ ways.

\emph{Each choice is a copy.} The complete $r$-partite
graph on $L_1^{*},\dots,L_r^{*}$ together with $uv$
contains a copy of $F$ with $xy\mapsto uv$: the only
within-class edge $xy$ maps to $uv$, while every edge of
$F-xy$ joins two classes and the greedy choice makes all
such cross-pairs adjacent (extra edges of $H$ inside the
sets are irrelevant, since the copy is non-induced).
Distinct choices of the $L_j^{*}$ give distinct image
sets, recovered by intersecting the image with the parts
$V_j$; fixing one bijection $L_j\to L_j^{*}$ for each $j$
(with $x\mapsto u$, $y\mapsto v$) turns each choice into an
edge-preserving injection, and passing to unlabeled copies
merges at most $|\Aut(F)|$ of them.

\emph{Counting.} Since
$\binom{\lfloor An\rfloor}{a}\ge c(A,a)\,n^a$ for fixed
$a\ge0$, $A>0$ and large $n$, the product of the binomial
factors is at least a positive constant (depending only on
$F$ and $r$) times
$n^{|L_1|-2}\prod_{j\ge2}n^{|L_j|}=n^{f-2}$. Hence
$N_F(H,uv)\ge b_F\,n^{f-2}$ for some $b_F>0$. Since
$c(n,F)<2\alpha_Fn^{f-2}$ for large $n$ by
Lemma~\ref{lem:Mub2010-c}, we get
$N_F(H,uv)\ge b_F n^{f-2}\ge\tfrac{b_F}{2\alpha_F}\,c(n,F)$,
and the claim holds with $\gamma=b_F/(2\alpha_F)$.
\end{proof}

\begin{claim} \label{cl-W-sub-S2}
We have $\bigcup_{i=1}^r W_i\subseteq S^{(2)}$. 
\end{claim}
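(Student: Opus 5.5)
We argue by contradiction and use the local count of Claim~\ref{clm:local-count} together with the choice $C_0\le\frac1{100}\eps_1\gamma$. Suppose some $u\in W_i$ satisfies $u\notin S^{(2)}$. Then $u\in V_i\setminus S^{(2)}$ and $d_{V_i}(u)\ge 4\eps_1 n$. The plan is to show that the many class-edges at $u$ already force more copies of $F$ than \eqref{eq:sharp-counter} allows.

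First, we find many good neighbours of $u$ inside its own part. By Claims~\ref{clm:S-small} and~\ref{clm:W-small}, $|W_i\cup S^{(2)}|\le 2\eps_1 n$. Hence at least $4\eps_1 n-2\eps_1 n=2\eps_1 n$ neighbours $v$ of $u$ in $V_i$ lie in $V_i^{(2)}$. For each such $v$, the class-edge $uv$ has $u\in V_i\setminus S^{(2)}$ and $v\in V_i^{(2)}$. So Claim~\ref{clm:local-count} gives $N_F(H,uv)\ge\gamma\,c(n,F)$. The families counted by $N_F(H,uv)$ are pairwise disjoint for distinct class-edges. Using Lemma~\ref{lem:Mub2010-c}, we obtain
\[
N_F(H)\ \ge\ 2\eps_1 n\cdot\gamma\,c(n,F)\ \ge\ \eps_1\gamma\,\alpha_F\, n^{f-1}.
\]

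Next, we convert this into the scale of $m$. By Claim~\ref{clm:n-size}, $n\ge(1-\eps_1)\sqrt{2h/(1-1/r)}$, and $h\ge(1-\delta)m$. Hence
\[
n^{f-1}\ \ge\ (1-\eps_1)^{f-1}(1-\delta)^{(f-1)/2}\Big(\tfrac{2r}{r-1}\Big)^{(f-1)/2} m^{(f-1)/2}.
\]
Since $\eps_1,\delta$ are small, this yields $N_F(H)\ge \frac12\eps_1\gamma\,\Ksharp\, m^{(f-1)/2}$, where we used $\Ksharp=\alpha_F(\frac{2r}{r-1})^{(f-1)/2}$.

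On the other hand, $N_F(H)\le N_F(G)<(1-\eta)\Ksharp C m^{(f-1)/2}\le \Ksharp C_0 m^{(f-1)/2}$. Because $C_0\le\frac1{100}\eps_1\gamma<\frac12\eps_1\gamma$, the two bounds contradict each other. Thus every vertex of $W_i$ lies in $S^{(2)}$.

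The only delicate point is the correct order of constants. The constant $\gamma$ depends only on $F$ and $r$, and $C_0$ was chosen after $\eps_1$ and $\gamma$, so the comparison above is legitimate. Disjointness of the families $N_F(H,uv)$ is built into their definition, since each counted copy has exactly one class-edge. I expect no step to be a real obstacle beyond tracking these constants.
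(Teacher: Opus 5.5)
Your proposal is correct and follows the paper's proof essentially step for step. Like the paper, you find at least $2\eps_1 n$ neighbours of $u$ in $V_i^{(2)}$ and apply Claim~\ref{clm:local-count} to each class-edge using disjointness. You then convert $n^{f-1}$ to the $m$-scale via Claim~\ref{clm:n-size} and contradict \eqref{eq:sharp-counter} using $C_0\le \eps_1\gamma/100$.
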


\begin{proof}
Suppose on the contrary that 
$\bigcup_{i=1}^r W_i\not\subseteq S^{(2)}$.  
Choose $u\in W_i\setminus S^{(2)}$.  Then $d_{V_i}(u)\ge4\eps_1n$.  By Claims~\ref{clm:S-small} and~\ref{clm:W-small}, we have 
$
  |W_i\cup S^{(2)}|\le2\eps_1n$.
Hence $u$ has at least $2\eps_1n$ neighbors $v$ in $V_i^{(2)}$.  For each such $v$, Claim~\ref{clm:local-count} gives
$$  N_F(H,uv)\ge \gamma c(n,F).$$ 
The families counted for different edges $uv$ are disjoint because each counted embedding uses exactly one class-edge of the partition, namely $uv$.  Therefore, using Lemma~\ref{lem:Mub2010-c}, we get 
\[
   N_F (H)
  \ge 2\eps_1n \cdot \gamma c(n,F)
  \ge \eps_1\gamma\alpha_F n^{f-1}
\]
for all sufficiently large $n$.  By Claim~\ref{clm:n-size} and $h>(1-\delta)m$, 
we have 
\[
  n^{f-1}
  \ge
  (1-O_F(\eps_1+\delta))
  \left(\tfrac{2mr}{r-1}\right)^{(f-1)/2}.
\]
Thus
\[
   N_F (G)\ge N_F (H)
  \ge
  (1-O_F(\eps_1+\delta))\eps_1\gamma\alpha_F
  \left(\tfrac{2r}{r-1}\right)^{(f-1)/2}m^{(f-1)/2}.
\]
Since $C\le C_0\le\eps_1\gamma/100$, this is larger than the right-hand side of \eqref{eq:sharp-counter} for all sufficiently large $m$, which leads to a contradiction with the assumption. Thus, we must have 
$\bigcup_{i=1}^r W_i\subseteq S^{(2)}$. 
\end{proof}

Since $S^{(2)}\subseteq S^{(1)}$, we have $\bigcup_iW_i\subseteq S^{(1)}$. 
Let $\bm{x}=(x_v)_{v\in V(H)}$ be the unit Perron eigenvector of $H$, and 
let $u^*\in V(H)$ be a vertex such that $x_{u^*}=\max_{v\in V(H)} x_v$. 

\begin{claim}\label{clm:perron-almost-constant}
Every $u\in\bigcup_{i=1}^r V_i^{(1)}$ satisfies
$x_u\ge(1-16f^2\eps_1)x_{u^\ast}$.
\end{claim}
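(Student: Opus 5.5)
The plan is to compare the eigen-equations $\lambda x_v=\sum_{w\in N_H(v)}x_w$ at $u$ and at $u^\ast$. Both right-hand sides should be approximated by the same quantity $X-X_i$, up to an error $O(\eps_1 n x_{u^\ast})$. Here $X=\sum_{v\in V(H)}x_v$ and $X_i=\sum_{v\in V_i}x_v$. Dividing by $\lambda\approx(1-\frac1r)n$ then gives $x_{u^\ast}-x_u=O(\eps_1)x_{u^\ast}$. Throughout, write $x^\ast:=x_{u^\ast}$ and bound every individual coordinate by $x^\ast$.

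\emph{Step 1: $u^\ast$ is a clean vertex.} From $\lambda x^\ast=\sum_{w\in N(u^\ast)}x_w\le d_H(u^\ast)x^\ast$ we get $d_H(u^\ast)\ge\lambda(H)$. By Claim~\ref{clm:regularization}, $\lambda(H)\ge(1-\frac1r)\psi$, and by Claim~\ref{clm:n-size}, $\psi\ge n/(1+\eps_1^2)$. Hence $d_H(u^\ast)\ge(1-\frac1r)(1-\eps_1^2)n>(1-\frac1r-4\eps_1)n$, so $u^\ast\notin S^{(1)}$. Since $\bigcup_iW_i\subseteq S^{(2)}\subseteq S^{(1)}$ by Claim~\ref{cl-W-sub-S2}, also $u^\ast\notin W_j$. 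Thus $u^\ast\in V_j^{(1)}$ for some $j$.

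\emph{Step 2: two-sided estimate at clean vertices.} Fix $v\in V_i^{(1)}$. Then $d_{V_i}(v)<4\eps_1n$. By Claim~\ref{clm:balanced-partition} and $v\notin S^{(1)}$, the number of non-neighbours of $v$ in $V\setminus V_i$ is at most
\[
(1-\tfrac1r+3\eps_1)n-\big((1-\tfrac1r-4\eps_1)n-4\eps_1n\big)=11\eps_1n .
\]
Therefore
\[
X-X_i-11\eps_1nx^\ast\le\lambda x_v\le X-X_i+4\eps_1nx^\ast .
\]

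\emph{Step 3 (main obstacle): the class sums $X_i$ are nearly equal.} Step 2 compares $u\in V_i$ with $u^\ast\in V_j$, so we still need $|X_i-X_j|=O(\eps_1nx^\ast)$. Sum the lower and upper bounds of Step 2 over $v\in V_i^{(1)}$. The at most $2\eps_1n$ vertices of $V_i\setminus V_i^{(1)}$ contribute at most $2\eps_1nx^\ast$ to $X_i$ (Claims~\ref{clm:S-small},~\ref{clm:W-small}). This yields
\[
\lambda X_i=|V_i|\,(X-X_i)+O(\eps_1n^2x^\ast).
\]
Here we also use $X-X_i\le nx^\ast$ and $|V_i|=\frac nr+O(\eps_1n)$. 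Solving gives $X_i\,(\lambda+|V_i|)=|V_i|X+O(\eps_1n^2x^\ast)$. By Claims~\ref{clm:n-size} and~\ref{clm:balanced-partition}, $\lambda+|V_i|=n+O(\eps_1n)$ and $X\le nx^\ast$, so $X_i=\frac Xr+O(\eps_1nx^\ast)$ uniformly in $i$. The explicit constant should come out as a modest multiple of $\eps_1$, say at most $8\eps_1nx^\ast$.

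\emph{Conclusion.} Apply Step 2 at $u$ (lower bound) and at $u^\ast$ (upper bound), and subtract:
\[
\lambda(x^\ast-x_u)\le (X_i-X_j)+15\eps_1nx^\ast\le O(\eps_1)\,nx^\ast .
\]
Dividing by $\lambda\ge(1-\frac1r)(1-\eps_1^2)n\ge\frac23(1-\eps_1^2)n$ (as $r\ge3$) gives $x_u\ge(1-c\,\eps_1)x^\ast$ with $c$ an absolute constant, roughly $50$. Since $f\ge r+1\ge4$, we have $16f^2\ge256\ge c$, and the claim follows.

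The only delicate point is bookkeeping the constants in Step 3. The $f^2$ slack makes this comfortable, so no sharper estimate should be required.
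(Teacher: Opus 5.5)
Your proof is correct, but it handles the passage between different parts differently from the paper.

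The paper also compares two clean vertices of the same part, via $\lambda(x_{u_1}-x_{u_2})\le\sum_{v\in N_H(u_1)\setminus N_H(u_2)}x_v$. This is what you get by subtracting your two Step~2 bounds. It then locates $u^\ast$ in some $V_1^{(1)}$ exactly as in your Step~1. For the other parts, however, it argues by contradiction. Suppose some $u_0\in V_2^{(1)}$ had $x_{u_0}<(1-16f^2\eps_1)x^\ast$. Then all of $V_2^{(1)}$ lies below $(1-8f^2\eps_1)x^\ast$. The eigen-equation at $u^\ast$ then forces $\lambda(H)\le(1-\frac1r-4r\eps_1)n$, contradicting $\lambda(H)\ge(1-\frac1r-\eps_1)n$.

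You instead sum the eigen-equation over each part to get the self-consistency relation $X_i(\lambda+|V_i|)=|V_i|X+O(\eps_1n^2x^\ast)$. This shows the part masses $X_i$ nearly agree, and you then compare $u$ with $u^\ast$ through the common quantity $X-X_i$.

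The paper's route is shorter and never tracks the $X_i$, but its cross-part step relies essentially on the sharp lower bound for $\lambda(H)$. In your argument that step only needs $\lambda(H)\ge cn$ and balanced parts; the sharp bound enters only in Step~1. Your route also yields an absolute constant, about $45$--$50$ and well within $16f^2\ge256$, rather than one growing like $f^2$.

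One small repair. Claims~\ref{clm:n-size} and~\ref{clm:balanced-partition} do not give the upper bound on $\lambda(H)$ that is implicit in $\lambda+|V_i|=n+O(\eps_1n)$. You can get it from $\lambda(H)\le d_H(u^\ast)\le(1-\frac1r+7\eps_1)n$, which holds because $u^\ast\in V_j^{(1)}$. Alternatively, you can avoid it entirely: $s\mapsto sX/(\lambda+s)$ is $(X/\lambda)$-Lipschitz, so $|X_i-X_j|=O(\eps_1nx^\ast)$ follows directly without passing through $X/r$.
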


\begin{proof}
Throughout we use $|V_j|\le(\tfrac1r+3\eps_1)n$
(Claim~\ref{clm:balanced-partition}). By Claim~\ref{clm:n-size},
$\lambda(H)\ge\sqrt{2(1-\tfrac1r)h}=(1-\tfrac1r)\psi$ with
$\psi\ge(1-\eps_1)n$, so
$\lambda(H)\ge(1-\tfrac1r-\eps_1)n$; since $r\ge3$, this
gives $\lambda(H)>\tfrac n2$.

\emph{Non-neighbors.} Fix
$u\in V_i^{(1)}=V_i\setminus(W_i\cup S^{(1)})$, so
$d_{V_i}(u)<4\eps_1n$ and $d_H(u)>(1-\tfrac1r-4\eps_1)n$.
For $j\ne i$, splitting off the parts $V_i$ and $V_s$
($s\ne i,j$) gives
$d_{V_j}(u)=d_H(u)-d_{V_i}(u)-\sum_{s\ne i,j}d_{V_s}(u)
>(\tfrac1r-3f\eps_1)n$ (using $f\ge r+1$). 
Hence
$
  \sum_{j\ne i}\bigl(|V_j|-d_{V_j}(u)\bigr)
  <3(f+1)(r-1)\eps_1n<4(f^2-1)\eps_1n$.

\emph{Same part.} Let $u_1,u_2\in V_i^{(1)}$ with
$x_{u_1}\ge x_{u_2}$. From the eigenvalue equation, we have 
$$\lambda(H)(x_{u_1}-x_{u_2})
=\sum_{v\in V(H)}(a_{u_1v}-a_{u_2v})x_v
\le\sum_{v\in N_H(u_1)\setminus N_H(u_2)}x_v.$$
Each such $v$ lies either in $N_{V_i}(u_1)$ (at most
$d_{V_i}(u_1)<4\eps_1n$ vertices) or, for some $j\ne i$, in
$V_j\setminus N_{V_j}(u_2)$, and the latter total is at
most $4(f^2-1)\eps_1n$ by the non-neighbor bound applied to
$u_2$. As each $x_v\le x_{u^\ast}$, we get
$\lambda(H)(x_{u_1}-x_{u_2})\le4f^2\eps_1n\,x_{u^\ast}$, and
$\lambda(H)>\tfrac n2$ then yields
$x_{u_1}-x_{u_2}\le8f^2\eps_1x_{u^\ast}$. Thus, the Perron
coordinates inside one $V_i^{(1)}$ differ by at most
$8f^2\eps_1x_{u^\ast}$.

\emph{Location of $u^\ast$.} Since
$\lambda(H)x_{u^\ast}=\sum_{v\in N_H(u^\ast)}x_v
\le d_H(u^\ast)x_{u^\ast}$, we have
$d_H(u^\ast)\ge\lambda(H)\ge(1-\tfrac1r-\eps_1)n$, so
$u^\ast\notin S^{(1)}$. As $\bigcup_iW_i\subseteq S^{(1)}$,
this places $u^\ast\in\bigcup_iV_i^{(1)}$, say
$u^\ast\in V_1^{(1)}$. 
Applying the same-part bound established above,  
we then obtain 
$x_u\ge(1-8f^2\eps_1)x_{u^\ast}$ for all $u\in V_1^{(1)}$.

\emph{Other parts.} Suppose some
$u_0\in\bigcup_{i\ge2}V_i^{(1)}$, say $u_0\in V_2^{(1)}$,
had $x_{u_0}<(1-16f^2\eps_1)x_{u^\ast}$. Since coordinates
in $V_2^{(1)}$ differ by at most $8f^2\eps_1x_{u^\ast}$,
every $u\in V_2^{(1)}$ then satisfies
$x_u<(1-8f^2\eps_1)x_{u^\ast}\le(1-8r^2\eps_1)x_{u^\ast}$,
the last step using $f\ge r$. We now bound
$\lambda(H)x_{u^\ast}=\sum_i\sum_{v\in N_{V_i}(u^\ast)}x_v$
part by part. For $V_2$, the at most
$|V_2|\le(\tfrac1r+3\eps_1)n$ neighbors in $V_2^{(1)}$ each
contribute at most $(1-8r^2\eps_1)x_{u^\ast}$, while the at most
$|W_2\cup S^{(1)}|\le2\eps_1n$ neighbors outside contribute
at most $ x_{u^\ast}$, so
$\sum_{v\in N_{V_2}(u^\ast)}x_v
\le \left((1-8r^2\eps_1)(\tfrac1r+3\eps_1)n+2\eps_1n \right) x_{u^\ast}$. For $V_1$, since
$u^\ast\notin W_1$ we have $d_{V_1}(u^\ast)<4\eps_1n$, so
this part contributes at most $4\eps_1n\,x_{u^\ast}$; each
$V_i$ with $i\ge3$ contributes at most
$(\tfrac1r+3\eps_1)n\,x_{u^\ast}$. 
Adding these, it follows that 
\[
  \lambda(H)
  \le(r-1-8r^2\eps_1)\bigl(\tfrac1r+3\eps_1\bigr)n+6\eps_1n
  \le\bigl(1-\tfrac1r-4r\eps_1\bigr)n,
\]
where the last step expands the coefficient to
$1-\tfrac1r-(5r-3)\eps_1-24r^2\eps_1^2$ and uses $r\ge3$.
This contradicts $\lambda(H)\ge(1-\tfrac1r-\eps_1)n$, so no
such $u_0$ exists, and the claim follows.
\end{proof} 

\begin{claim}\label{clm:S-empty}
We have $S^{(2)}=\varnothing$.
\end{claim}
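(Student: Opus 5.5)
We argue by contradiction: suppose some $w\in S^{(2)}$ exists, and show that $w$ is then $\beta$-deficient with $\beta=2\eps_2$. This is impossible, because the regularization process of Section~\ref{sec:regularization} terminated at $H$, so $H$ contains no $\beta$-deficient vertex. By definition, $d_H(w)\le(1-\frac1r-4\eps_2)n$. By Claim~\ref{clm:n-size}, $n\le(1+\eps_1)\psi$ with $\psi=\sqrt{2h/(1-1/r)}$, so $d_H(w)\le(1-\frac1r)\psi\le\psi$. This is the third condition in Definition~\ref{def:beta-deficient}. For the condition $x_w^2\le\beta/2$, we use $x_w\le x_{u^\ast}=O(h^{-1/4})$ from Lemma~\ref{lem:LLZ}; hence $x_w^2=o(1)\le\eps_2$ for large $h$.

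The main step is the inequality $2h\,x_w^2\le(1-2\eps_2)d_H(w)$. First we bound $x_w$ via the eigenvalue equation, $\lambda(H)x_w=\sum_{v\in N(w)}x_v\le d_H(w)\,x_{u^\ast}$, which gives $x_w\le d_H(w)x_{u^\ast}/\lambda(H)$. Next we bound $x_{u^\ast}$ from above. By Claim~\ref{clm:perron-almost-constant}, every vertex of $\bigcup_iV_i^{(1)}$ has $x_u\ge(1-16f^2\eps_1)x_{u^\ast}$. Claims~\ref{clm:S-small} and~\ref{clm:W-small} give $|\bigcup_iV_i^{(1)}|\ge(1-2\eps_1)n$. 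Since $\bm{x}$ is a unit vector, $x_{u^\ast}^2\le(1+O(f^2\eps_1))/n$. Combining the two bounds,
\[
2h\,x_w^2\le\frac{2h\,d_H(w)^2}{\lambda(H)^2}\cdot\frac{1+O(f^2\eps_1)}{n}.
\]
We now insert $\lambda(H)^2\ge 2(1-\frac1r)h$, so that $2h/\lambda(H)^2\le(1-\frac1r)^{-1}$, together with $d_H(w)\le(1-\frac1r-4\eps_2)n$. This yields
\[
2h\,x_w^2\le d_H(w)\cdot\frac{1-\frac1r-4\eps_2}{1-\frac1r}\,(1+O(f^2\eps_1))\le d_H(w)\Bigl(1-\tfrac{4r}{r-1}\eps_2+O(f^2\eps_1)\Bigr).
\]
Since $\eps_1\ll\eps_2$, the right-hand side is at most $(1-2\eps_2)d_H(w)$. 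So $w$ is $2\eps_2$-deficient, which is the desired contradiction.

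The hardest point is exactly this bookkeeping: the $\eps_2$-scale degree deficit must beat the $\eps_1$-scale error in the uniformity of the Perron vector. This is the reason for the hierarchy $\eps_1\ll\eps_2$ and for defining $S^{(1)}$ with $\eps_1$. We must also check that the error in $x_{u^\ast}^2\le(1+O(\eps_1))/n$ really is $O(\eps_1)$. Vertices outside $\bigcup_iV_i^{(1)}$ only add nonnegative mass to $\sum_v x_v^2=1$, so discarding them can only help the inequality; the bound therefore holds as stated.
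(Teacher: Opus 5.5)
Your proof is correct and follows the paper's argument: assume a vertex of $S^{(2)}$, bound $x_{u^\ast}^2$ by $(1+O(\eps_1))/n$ using Claim~\ref{clm:perron-almost-constant}, and combine this with the eigenvalue equation to show the vertex is $2\eps_2$-deficient, contradicting termination of the regularization. The differences are only cosmetic. You work with $n$ rather than $\psi$ in the key inequality, you bound $|\bigcup_i V_i^{(1)}|\ge(1-2\eps_1)n$ directly rather than via $W_i\subseteq S^{(1)}$, and you use Lemma~\ref{lem:LLZ} instead of $x_{u^\ast}^2\le(1+\eps_2)/\psi$ to get $x_w^2\le\eps_2$.
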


\begin{proof}
Suppose not, and fix $u\in S^{(2)}$; we show $u$ is $\beta$-deficient with $\beta=2\eps_2$,
contradicting Claim~\ref{clm:regularization}. Write $\psi=\sqrt{2h/(1-1/r)}$, so $2h=(1-\frac1r)\psi^2$ and, by
Claim~\ref{clm:n-size}, $(1-\eps_1)\psi\le n\le(1+\eps_1)\psi$.

Note that $\bigcup_iW_i\subseteq S^{(2)}\subseteq S^{(1)}$, so $V_i^{(1)}=V_i\setminus S^{(1)}$ and,
by Claim~\ref{clm:S-small}, $\sum_i|V_i^{(1)}|=n-|S^{(1)}|\ge(1-\eps_1)n\ge(1-\eps_1)^2\psi$.
Claim~\ref{clm:perron-almost-constant} then gives
\[
  1=\sum_{v\in V(H)}x_v^2\ge\sum_{i=1}^r\sum_{v\in V_i^{(1)}}x_v^2
  \ge(1-\eps_1)^2(1-16f^2\eps_1)^2\,\psi\,x_{u^\ast}^2\ge\Big(1-\tfrac12\eps_2\Big)\psi\,x_{u^\ast}^2,
\]
the last step using $\eps_1\ll\eps_2$. Hence $\psi x_{u^\ast}^2\le(1-\tfrac12\eps_2)^{-1}\le1+\eps_2$.

The fixed Perron vector is positive on $V(H)$ (otherwise some edge would be light, against
Claim~\ref{clm:regularization}). Using $\lambda(H)\ge\sqrt{2(1-\frac1r)h}=(1-\frac1r)\psi$ and
$x_{u^\ast}^2\le(1+\eps_2)/\psi$, we have 
\[
  (1-\tfrac1r)\psi\,x_u\le\lambda(H)x_u=\sum_{v\in N_H(u)}x_v\le d_H(u)x_{u^\ast}
  \le d_H(u)\sqrt{\tfrac{1+\eps_2}{\psi}},
\]
so $x_u^2\le d_H(u)^2(1+\eps_2)/((1-\frac1r)^2\psi^3)$. As $u\in S^{(2)}$ and $n\le(1+\eps_1)\psi$, 
\[
  d_H(u)\le\left(1-\tfrac1r-4\eps_2\right) 
  (1+\eps_1)\psi 
  \le \left(1-\tfrac1r-3\eps_2 \right) \psi\le\psi,
\]
using $\eps_1\ll\eps_2$ and $1-\frac1r<1$. Combining these with $2h=(1-\frac1r)\psi^2$, we get 
\[
  \frac{2hx_u^2}{d_H(u)}\le\frac{d_H(u)(1+\eps_2)}{(1-1/r)\psi}
  \le\frac{(1-\frac1r-3\eps_2)(1+\eps_2)}{1-1/r}
  =1+\eps_2-\frac{3\eps_2(1+\eps_2)}{1-1/r}\le1-2\eps_2,
\]
where the last step uses $1-\frac1r\le1$. Thus $2hx_u^2\le(1-2\eps_2)d_H(u)$. Moreover
$x_u^2\le x_{u^\ast}^2\le(1+\eps_2)/\psi\le\eps_2$ for large $h$ (as $\psi\to\infty$), and
$d_H(u)\le\psi$ was just shown. With $\beta=2\eps_2$, all three conditions of
Definition~\ref{def:beta-deficient} hold, so $u$ is $\beta$-deficient, a contradiction. Hence
$S^{(2)}=\varnothing$.
\end{proof}

\subsection{Local counts from each class-edge}

Claim \ref{cl-W-sub-S2} gives  $\bigcup_{i=1}^r W_i\subseteq S^{(2)}$. Then Claim~\ref{clm:S-empty}  implies $ W_i=\varnothing$ for all $i\in [r]$.  

\begin{claim} \label{clm:clean-case}
The following estimates hold.
\begin{enumerate}[label=\textup{(\roman*)}]
\item Every vertex has internal degree at most $4\eps_1n$ with respect to the partition $V_1\cup\cdots\cup V_r$.
\item Every vertex is incident with at most $10\eps_2n$ missing cross-edges.
\item If $\bm{x}$ is the unit Perron vector of $H$, 
then $ x_v^2\le (1+O_F(\eps_1))\frac1n$ for every 
$v\in V(H)$. 
\end{enumerate}
\end{claim}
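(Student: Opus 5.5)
Everything follows from the structural facts already in hand: $W_i=\varnothing$ for all $i$ (Claims~\ref{cl-W-sub-S2} and~\ref{clm:S-empty}) and $S^{(2)}=\varnothing$, hence also $S^{(1)}\subseteq$ a set whose complement contains every $V_i^{(1)}$. I would prove the three items in order, since (iii) uses the almost-constancy of the Perron vector, which requires $S^{(1)}$ to be handled.

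For (i), the definition of $W_i$ says every $v\in V_i\setminus W_i$ has $d_{V_i}(v)<4\eps_1 n$. Since $W_i=\varnothing$, every vertex of $V_i$ qualifies, and (i) is immediate.

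For (ii), take $v\in V_i$. Since $S^{(2)}=\varnothing$, we have $d_H(v)>(1-\frac1r-4\eps_2)n$. By (i), $d_{V_i}(v)<4\eps_1n$, so the number of cross-neighbours of $v$ exceeds $(1-\frac1r-4\eps_2-4\eps_1)n$. The number of potential cross-neighbours is $n-|V_i|\le(1-\frac1r+3\eps_1)n$ by Claim~\ref{clm:balanced-partition}. Subtracting, the number of missing cross-edges at $v$ is less than $(4\eps_2+7\eps_1)n\le10\eps_2 n$, using $\eps_1\ll\eps_2$.

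For (iii), the natural route is the one already taken inside the proof of Claim~\ref{clm:S-empty}. Since $W_i=\varnothing$, we have $V_i^{(1)}=V_i\setminus S^{(1)}$. By Claim~\ref{clm:S-small}, $\sum_i|V_i^{(1)}|\ge(1-\eps_1)n$. Claim~\ref{clm:perron-almost-constant} then gives
\[
1\ge\sum_i\sum_{v\in V_i^{(1)}}x_v^2\ge(1-\eps_1)(1-16f^2\eps_1)^2\,n\,x_{u^\ast}^2 .
\]
Hence $x_v^2\le x_{u^\ast}^2\le(1+O_F(\eps_1))/n$ for every $v$. Note that this bounds the maximum coordinate against $n$ directly, not against $\psi$, so no loss from Claim~\ref{clm:n-size} enters.

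The only delicate point is making sure the error in (iii) is $O_F(\eps_1)$ rather than $O(\eps_2)$. This holds because Claim~\ref{clm:perron-almost-constant} and Claim~\ref{clm:S-small} are stated at the $\eps_1$ scale and use the set $S^{(1)}$, not $S^{(2)}$. Everything else is bookkeeping with the hierarchy $\eps_1\ll\eps_2$.
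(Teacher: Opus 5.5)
Your proposal is correct and follows the paper's proof essentially line by line. Part (i) comes from $W_i=\varnothing$. Part (ii) comes from $S^{(2)}=\varnothing$ together with (i) and the part-size bounds of Claim~\ref{clm:balanced-partition}. Part (iii) comes from applying Claim~\ref{clm:perron-almost-constant} on $\bigcup_i V_i^{(1)}=V(H)\setminus S^{(1)}$ with $|S^{(1)}|\le\eps_1 n$. (Your opening phrase about $S^{(1)}$ lying in ``a set whose complement contains every $V_i^{(1)}$'' is vacuous and should simply read $V_i^{(1)}=V_i\setminus S^{(1)}$, but nothing depends on it.)
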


\begin{proof}
Since $W_i=\varnothing$, the definition of $W_i$ gives $d_{V_i}(v)<4\eps_1n$ for every $v\in V_i$.  This proves (i).

Since $S^{(2)}=\varnothing$, every $v\in V_i$ satisfies
$ d_H(v)>\left(1-\frac1r-4\eps_2\right)n$. 
The number of missing cross-neighbors of $v$ is
$
  \sum_{j\ne i}\bigl(|V_j|-d_{V_j}(v)\bigr)
  =n-|V_i|-d_H(v)+d_{V_i}(v)$. 
Using Claim~\ref{clm:balanced-partition}, part (i), and $\eps_1\ll\eps_2$, this is at most
$
  n-\left(\frac1r-3\eps_1\right)n
  -\left(1-\frac1r-4\eps_2\right)n+4\eps_1n
  \le 10\eps_2n$.
This proves (ii).

Recall that $x_{u^\ast}=\max_v \{x_v\}$.  Claim~\ref{clm:perron-almost-constant} gives $ x_u\ge(1-16f^2\eps_1)x_{u^\ast}$  
for every  $u\in \bigcup_{i=1}^r V_i^{(1)}$. 
Also $|S^{(1)}|\le\eps_1n$ by Claim~\ref{clm:S-small}, and $W_i=\varnothing$ implies $V_i^{(1)}=V_i\setminus S^{(1)}$.  Hence
\[
  1=\sum_vx_v^2
  \ge (n-|S^{(1)}|)(1-16f^2\eps_1)^2x_{u^\ast}^2
  \ge (1-O_F(\eps_1))n x_{u^\ast}^2.
\]
Thus, we get $x_{u^\ast}^2\le(1+O_F(\eps_1))/n$, proving (iii).
\end{proof}

The next claim gives an asymptotically sharp local-count lower bound. 
 Let $P=\bigcup_{i=1}^rE(H[V_i])$ and $p=\sum_{i=1}^r e(H[V_i])$ be the set and the number of class-edges of $H$, respectively. 

\begin{claim}[sharp local count]\label{clm:sharp-local-count}
For every class-edge $e\in \bigcup_{i=1}^rE(H[V_i])$, we have 
\begin{equation*} 
  N_F(H,e)\ge \big(1-O_F(\eps_2) \big)c(n,F).
\end{equation*}
\end{claim}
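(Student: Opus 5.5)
The approach is to compare $H$ near $e=uv\in E(H[V_1])$ with the complete $r$-partite graph $K(V_1,\dots,V_r)$ plus the edge $uv$. In that graph the number of copies of $F$ using $uv$ is at least $c(\mathbf n,F)\ge(1-O_F(\eps_1))c(n,F)$, by Lemma~\ref{lem:near-balanced-one-edge} together with the balance $\big||V_i|-n/r\big|\le 3\eps_1 n$ from Claim~\ref{clm:balanced-partition}. We then subtract the copies that are destroyed. A copy through $uv$ in $K(V_1,\dots,V_r)+uv$ fails to be counted in $N_F(H,uv)$ in only two ways: it uses a cross-pair that is missing in $H$, or its vertex set spans another class-edge of $H$ that the copy itself uses.

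Concretely, we fix the image of $uv$ and count copies of $F$ in $K(V_1,\dots,V_r)+uv$ whose only within-part edge is $uv$. Every copy through $uv$ in $K+uv$ automatically has this property, since all other edges of $K$ are cross-edges. Such a copy lies in $H$ with $uv$ as its unique class-edge exactly when all its cross-edges are present in $H$; extra class-edges of $H$ on its vertex set are harmless, because the copy is non-induced and only its image edges matter. So it suffices to bound the number of copies in $K+uv$ through $uv$ that use at least one non-edge of $H$ between parts.

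We bound these bad copies by summing over the position of the missing pair.

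\emph{Missing pair incident to $u$ or $v$.} By Claim~\ref{clm:clean-case}(ii), $u$ has at most $10\eps_2 n$ missing cross-neighbours. For each, the remaining $f-3$ vertices can be chosen in $O_F(n^{f-3})$ ways. This gives $O_F(\eps_2 n^{f-2})$ bad copies, and likewise for $v$.

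\emph{Missing pair $ab$ with $a,b\notin\{u,v\}$.} The number of missing cross-pairs in total is at most $\frac12 n\cdot 10\eps_2 n$. For each such pair, the other $f-4$ vertices can be chosen in $O_F(n^{f-4})$ ways, again giving $O_F(\eps_2 n^{f-2})$.

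Since $c(n,F)=\Theta_F(n^{f-2})$ by Lemma~\ref{lem:Mub2010-c}, the total loss is $O_F(\eps_2)c(n,F)$. Therefore
\[
N_F(H,e)\ge c(\mathbf n,F)-O_F(\eps_2)n^{f-2}\ge (1-O_F(\eps_1)-O_F(\eps_2))c(n,F),
\]
which is the claimed bound since $\eps_1\ll\eps_2$.

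The main obstacle is making sure the counting is done with consistent multiplicities. The cleanest way is to work with the labelled embeddings $\phi$ of $F$ with $\phi(\{x,y\})=\{u,v\}$ (over all edges $xy$ whose endpoints can be placed in one part) into $K+uv$, and with the same embeddings into $H$. Both sides then pass to unlabelled copies by the same $|\Aut(F)|$-type factor, and a copy whose only class-edge is $uv$ corresponds exactly to such embeddings. Note that $c(\mathbf n,F)$ is by definition the number of copies containing the added edge, so these quantities are directly comparable. One must also check that the definition of $N_F(H,e)$ is met: a copy in $H$ whose image-edge set contains only $uv$ among class-edges. This holds because the image edges of an embedding from $K+uv$ are $uv$ plus cross-pairs.
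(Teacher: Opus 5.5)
Your proposal is correct and follows essentially the same route as the paper. Both arguments lower-bound the copies in $K(V_1,\dots,V_r)+uv$ via Lemma~\ref{lem:near-balanced-one-edge} with $\xi=3\eps_1$, note that such a copy survives in $H$ with $uv$ as its only class-edge exactly when all its cross-pairs are present, and then subtract $O_F(\eps_2 n^{f-2})$ bad labelled embeddings (handling multiplicities via $|\Aut(F)|$), split according to whether the missing cross-pair meets $\{u,v\}$, using Claim~\ref{clm:clean-case}(ii).
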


\begin{proof}
Assume $e\in E(H[V_i])$ and write $e=uv$.  Fix once and for all an orientation $(u,v)$ of this host edge.  Let
$  \mathbf n=(n_1,\ldots,n_r)=(|V_1|,\ldots,|V_r|)$. 
Let $K_i(\mathbf n)+e$ be the graph obtained from the complete $r$-partite graph with part sizes $\mathbf n$ by adding the single internal edge $e$ inside the $i$th part, and let $c_i(\mathbf n,F)$ be the number of copies of $F$ in this one-edge graph.  Claim~\ref{clm:balanced-partition} gives 
$ \left|n_j-\frac nr\right|\le 3\eps_1n$ for every $j\in [r]$. 
Therefore, Lemma~\ref{lem:near-balanced-one-edge}, applied with $\xi=3\eps_1$, gives
\begin{equation}\label{eq:one-edge-continuity}
  c_i(\mathbf n,F)
  \ge \big(1-O_F(\eps_1)\big)c(n,F).
\end{equation}

Every copy counted by $c_i(\mathbf n,F)$ uses the edge $e$: after removing $e$, the graph is $r$-partite and therefore contains no copy of $F$.  We now estimate how many of these copies are not present in $H$.  Such a copy must use at least one cross-edge of $K_i(\mathbf n)+e$ that is missing from $H$.

We count bad copies first as labeled edge-preserving injections and then divide by at most the constant $|\Aut(F)|$; this only changes the implicit constant.  By Claim~\ref{clm:clean-case}(ii), each endpoint of $e$ is incident with at most $10\eps_2n$ missing cross-edges.  If a missing required cross-edge is incident with $u$ or $v$, then there are $O_F(\eps_2n)$ choices for that missing host edge, $O_F(1)$ choices for the edge of $F$ mapped to it and for the endpoint assignment, and $O_F(n^{f-3})$ choices for the remaining image vertices.  This gives $O_F(\eps_2n^{f-2})$ bad labeled injections.

It remains to consider a missing required cross-edge whose two endpoints are different from $u$ and $v$.  By Claim~\ref{clm:clean-case}(ii), the total number of missing cross-edges is at most
\[
  \frac12\sum_{w\in V(H)}10\eps_2n=O(\eps_2n^2).
\]
After choosing such a missing host edge and keeping the two fixed endpoints $u,v$ of $e$, there are $O_F(1)$ choices for the corresponding two vertices of $F$ and their assignment to the missing edge, and $O_F(n^{f-4})$ choices for the remaining image vertices.  This gives another $O_F(\eps_2n^{f-2})$ bad labeled injections.  If a copy has several missing required cross-edges, choose one of them by any fixed deterministic rule; the preceding bounds still cover it.  Hence the total number of copies counted by $c_i(\mathbf n,F)$ but not by $N_F(H,e)$ is
$
  O_F(\eps_2n^{f-2})$. 
Using $\eps_1\ll\eps_2$ and $c(n,F)=\alpha_Fn^{f-2}+O_F(n^{f-3})$, 
we see that \eqref{eq:one-edge-continuity} yields
$N_F(H,e)
  \ge \big(1-O_F(\eps_2) \big)c(n,F)$, 
as required.
\end{proof}

\subsection{Conversion of the spectral gap into class-edges}

The following is the main first-order refinement.  
It uses the almost-uniform Perron vector in the clean case and keeps the negative term in the expansion of $\tau_r(h-p)$.

\begin{claim}[sharp conversion from spectral gap to class-edges]\label{clm:sharp-class-edges}
We have 
\begin{equation}\label{eq:sharp-p}
  p\ge (1-O_F(\theta)-o(1))Cn.
\end{equation}
\end{claim}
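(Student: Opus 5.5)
Let $H^*$ be the graph obtained from $H$ by deleting all $p$ class-edges. It is $r$-partite with parts $V_1,\dots,V_r$, hence $K_{r+1}$-free. Theorem~\ref{thm:nikiforov} then gives
\[
\lambda(H^*)\le\sqrt{(1-\tfrac1r)2(h-p)}=\tau_r(h-p).
\]

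On the other hand, evaluate the Rayleigh quotient of $H^*$ at the unit Perron vector $\bm{x}$ of $H$:
\[
\lambda(H^*)\ge \bm{x}^TA(H^*)\bm{x}=\lambda(H)-2\sum_{uv\in P}x_ux_v .
\]
By Claim~\ref{clm:clean-case}(iii), every term satisfies $x_ux_v\le (1+O_F(\eps_1))/n$. Therefore
\[
\lambda(H^*)\ge \lambda(H)-\frac{2p}{n}\bigl(1+O_F(\eps_1)\bigr).
\]

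Combining the two bounds with Claim~\ref{clm:regularization}, $\lambda(H)\ge\tau_r(h)+(1-\theta)C$, we obtain
\[
\tau_r(h)-\tau_r(h-p)+(1-\theta)C\le \frac{2p}{n}\bigl(1+O_F(\eps_1)\bigr).
\]
The left side is at least $(1-\theta)C$, and it is this term that carries the gap. The next step is to show that the concave correction $\tau_r(h)-\tau_r(h-p)$ is not wasted: it is positive and of the same first order as $2p/n$. Expanding,
\[
\tau_r(h)-\tau_r(h-p)\ge \frac{(1-1/r)p}{\tau_r(h)}=\frac{p}{\psi},
\]
since $\tau_r(h)=(1-\frac1r)\psi$ with $\psi=\sqrt{2h/(1-1/r)}$. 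By Claim~\ref{clm:n-size}, $\psi=(1+O(\eps_1))n$, so this term is $(1-O(\eps_1))p/n$. Substituting gives
\[
(1-\theta)C\le \frac{p}{n}\bigl(2-1+O_F(\eps_1)\bigr)=\frac{p}{n}\bigl(1+O_F(\eps_1)\bigr).
\]
Since $\eps_1\ll\theta$, this yields $p\ge(1-O_F(\theta))Cn$, which is \eqref{eq:sharp-p}. Keeping this negative term is essential. Without it, the argument only gives $p\ge Cn/2$, which loses a factor of two against the matching construction. This also matches the heuristic that each class-edge raises $\lambda^2$ by about $2\cdot 2/n\cdot\lambda$, while $\tau_r^2$ itself grows by $2(1-1/r)$ per edge.

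\textbf{Main obstacle.} The delicate point is precision: every error term must be $O(\eps_1)$ or $o(1)$ multiplicatively in $p/n$, and never additive at scale $C$. Two things need checking.

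First, the expansion of $\tau_r(h)-\tau_r(h-p)$ requires $p=o(h)$, or at least $p\le\eps h$. This holds because $p\le\eps_1^2n^2$ by Claim~\ref{clm:balanced-partition}. More precisely, concavity of $\sqrt{\cdot}$ gives the exact lower bound
\[
\sqrt{a}-\sqrt{a-b}\ge \frac{b}{2\sqrt a},
\]
so no upper bound on $p$ is needed at all. The upper estimate $x_ux_v\le(1+O(\eps_1))/n$ also holds uniformly, so both inequalities run in the favorable direction.

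Second, if the paper's convention intends the $o(1)$ to absorb additive errors, I would also allow for the possibility that $p/n$ is tiny compared with $C$. In that case the displayed inequality forces a contradiction directly, so no separate case analysis is needed.
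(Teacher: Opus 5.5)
Your proof is correct and follows the paper's argument. You delete the class-edges, bound $\lambda(H^*)\le\tau_r(h-p)$ by Nikiforov's theorem, evaluate $\bm{x}^TA(H^*)\bm{x}$ at the Perron vector of $H$ using Claim~\ref{clm:clean-case}(iii), and keep the term $\tau_r(h)-\tau_r(h-p)$ so as to get $p/n$ rather than $2p/n$. The one small difference is that your bound $\tau_r(h)-\tau_r(h-p)\ge p/\psi$, from $\sqrt a-\sqrt{a-b}\ge b/(2\sqrt a)$, holds for all $0\le p\le h$. This removes the paper's case split on $p<Cn$ and its $o(1)$ term.
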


\begin{proof}
Write $H^*=H-P$ for the subgraph of $H$ obtained by removing all $p$ class-edges.  Since $H^*$ is $r$-partite with $h-p$ edges, Theorem~\ref{thm:nikiforov} gives $\lambda(H^*)\le\tau_r(h-p)$.  Let $\bm{x}$ be the unit Perron vector of $H$.  Splitting $A(H)=A(H^*)+A(P)$, using $\bm{x}^{T}A(H^*)\bm{x}\le\lambda(H^*)$ and the bound $x_ux_v\le(1+O_F(\eps_1))/n$ from Claim~\ref{clm:clean-case}(iii), 
we get 
\[
  \lambda(H)=\bm{x}^{T}A(H)\bm{x}
  =\bm{x}^{T}A(H^*)\bm{x}+2\!\sum_{uv\in P}x_ux_v
  \le\tau_r(h-p)+2(1+O_F(\eps_1))\tfrac pn.
\]
Together with \eqref{eq:surviving-gap}, this yields
\begin{equation}\label{eq:gap-to-p-start}
  (1-\theta)C\le\tau_r(h-p)-\tau_r(h)+2(1+O_F(\eps_1))\tfrac pn.
\end{equation}
If $p\ge Cn$, then \eqref{eq:sharp-p} is immediate, so assume $p<Cn=O(n)$.  Then $p/h=o(1)$, since $h=\Theta_r(n^2)$ by Claim~\ref{clm:n-size}, which also gives $h=(1-\frac{1}{r}) \frac{n^2}{2} (1+O_F(\eps_1))$ and  $\tau_r(h)=(1-\tfrac1r)n(1+O_F(\eps_1))$.  Note that $\tau_r(h)^2-\tau_r(h-p)^2=2(1-\tfrac1r)p$, while $\tau_r(h-p)=\tau_r(h)\sqrt{1-p/h}=\tau_r(h)(1+o(1))$. Hence
\[
  \tau_r(h)-\tau_r(h-p)
  =\frac{2(1-\tfrac1r)p}{\tau_r(h)+\tau_r(h-p)}
  =\big(1+O_F(\eps_1)+o(1)\big)\tfrac pn.
\]
Substituting into \eqref{eq:gap-to-p-start} gives $(1-\theta)C\le(1+O_F(\eps_1)+o(1))\tfrac pn$. 
Since $\eps_1\ll\theta$, we have 
\[
  p\ge\frac{1-\theta}{1+O_F(\eps_1)+o(1)}\,Cn\ge(1-O_F(\theta)-o(1))Cn,
\]
which proves \eqref{eq:sharp-p}.
\end{proof}

Finally, we count the copies of $F$.  
The families of copies counted by $N_F(H,e)$, for distinct class-edges $e\in P$, are disjoint, since each counted embedding has a unique within-class image-edge, namely $e$. Thus, we obtain $ N_F (G)\ge N_F (H)\ge\sum_{e\in P}N_F(H,e)$.  By Claims~\ref{clm:sharp-local-count} and~\ref{clm:sharp-class-edges}, together with Mubayi's estimate $c(n,F)=\alpha_Fn^{f-2}(1+o(1))$ and $\eps_2\ll\theta$, we have 
\[
   N_F (G)\ge(1-O_F(\eps_2)-o(1))\,p\,c(n,F)
  \ge(1-O_F(\theta)-o(1))\,\alpha_F\,Cn^{f-1}.
\]
Since $n^{f-1}\ge(1-O_F(\eps_1+\delta))\big(\tfrac{2mr}{r-1}\big)^{(f-1)/2}$ by Claim~\ref{clm:n-size} and $h>(1-\delta)m$, 
we get 
\[
   N_F (G)\ge(1-O_F(\theta)-o(1))\,\alpha_F\Big(\tfrac{2r}{r-1}\Big)^{(f-1)/2}C\,m^{(f-1)/2}.
\]
The hierarchy makes the fixed $O_F(\theta)$ error less than $\eta/2$, and then large $m$ makes the $o(1)$ error less than $\eta/2$; this contradicts \eqref{eq:sharp-counter} and proves the lower bound in Theorem~\ref{thm:linear-small-gap}.

\subsection{Sharpness of the exact coefficient}\label{sec:sharpness}

We prove the tightness of  the bound of Theorem~\ref{thm:linear-small-gap}, showing that the coefficient $\Ksharp:=\alpha_F (\tfrac{2r}{r-1})^{(f-1)/2}$ cannot be improved in the linear small-gap limit. 
Fix $\eta>0$ and a small $\xi>0$, and let $C>0$ be small with $t:=(1+\xi)C$.  Take $n=ra$, start from the balanced complete $r$-partite graph with parts $V_1,\ldots,V_r$ of size $a$, and add a matching of size $q:=\lfloor tn\rfloor$ inside $V_1$; this is possible once $tr<1/2$, since then $q\le a/2=|V_1|/2$.  Write $Y_{n,r,q}$ for the resulting graph and $m=e(Y_{n,r,q})$.

\begin{claim}[spectral gap via adding a matching]\label{clm:matching-gap}
As $n\to\infty$ with $t$ fixed and small, we have $\lambda(Y_{n,r,q})-\tau_r(e(Y_{n,r,q}))=t+o(1)$; in particular $\lambda(Y_{n,r,q})\ge\tau_r(m)+C$ for all large $n$.
\end{claim}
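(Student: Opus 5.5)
We need $\lambda(Y)-\tau_r(m)=t+o(1)$, where $m=e(T_{n,r})+q=\frac{r-1}{2r}n^2+q$ and $q=\lfloor tn\rfloor$. The plan is to get a two-sided estimate $\lambda(Y)=(1-\frac1r)n+\frac{2q}{n}+O(1/n)$ and compare it with the expansion of $\tau_r(m)$. The subtraction is delicate: the matching contributes $2q/n\approx 2t$ to $\lambda$, while $\tau_r$ grows by only about $t$, so both terms must be computed to first order.

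\textbf{Step 1: lower bound.} Use the Rayleigh quotient with the uniform unit vector $\mathbf{1}/\sqrt n$. This gives
\[
\lambda(Y)\ge \frac{2e(Y)}{n}=\frac{2}{n}\Big(\frac{r-1}{2r}n^2+q\Big)=\Big(1-\frac1r\Big)n+\frac{2q}{n}.
\]

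\textbf{Step 2: upper bound.} Write $A(Y)=A(T_{n,r})+A(M)$, where $M$ is the matching. Then
\[
\lambda(Y)\le \lambda(T_{n,r})+\lambda(M)=\Big(1-\frac1r\Big)n+1,
\]
which is too crude because it loses the $2q/n$ term. A sharper route uses equitable-type structure. Let $V_1=M_0\cup R$, where $M_0$ is the set of $2q$ matched vertices and $R$ the unmatched ones. The Perron vector is constant on $M_0$ (value $y$), on $R$ (value $z$), and on $V_2\cup\dots\cup V_r$ (value $w$), by symmetry and uniqueness of the Perron vector. This gives a $3\times 3$ quotient system:
\[
\lambda y=y+(n-a)w,\qquad \lambda z=(n-a)w,\qquad \lambda w=2qy+(a-2q)z+(n-2a)w\ \ \text{(adjusted to the partite structure)}.
\]
Equivalently, let $\lambda$ be the largest root of the cubic characteristic polynomial of the quotient matrix. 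Substituting $y=z\lambda/(\lambda-1)$ reduces this to a perturbation of the Tur\'an quotient. Expanding in the small parameter $q/a$ yields $\lambda=(1-\frac1r)n+\frac{2q}{n}+O(q/n^2)+O(1/n)$. More precisely, the correction term is $\frac{2q}{n}\cdot\frac{\lambda}{\lambda-1}\cdot(\dots)$, which contributes $2t+O(1/n)$. Since $q/n\to t$, this gives $\lambda(Y)=(1-\frac1r)n+2t+o(1)$. I expect this expansion to be the main technical step. An alternative I would try first, to avoid the cubic, is to bound $\lambda$ via $\lambda\le \bm{x}^TA\bm{x}$ using the near-uniformity of $\bm x$. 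The near-uniformity $x_v^2=(1+O(1/n))/n$ follows from the quotient equations. It gives $2\sum_{uv\in M}x_ux_v\le 2q(1+O(1/n))/n$, while $\bm x^TA(T_{n,r})\bm x\le(1-\frac1r)n$.

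\textbf{Step 3: threshold expansion.} We have
\[
\tau_r(m)=\sqrt{\Big(1-\frac1r\Big)^2n^2+2\Big(1-\frac1r\Big)q}=\Big(1-\frac1r\Big)n+\frac{q}{n}+O(q^2/n^3)=\Big(1-\frac1r\Big)n+t+o(1).
\]
Subtracting gives $\lambda(Y)-\tau_r(m)=2t-t+o(1)=t+o(1)$. Finally, since $t=(1+\xi)C>C$, the inequality $\lambda(Y)\ge\tau_r(m)+C$ holds for all large $n$.
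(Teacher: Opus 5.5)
Your proof is correct, and the route you say you would try first differs from the paper's. The paper writes $\lambda(Y_{n,r,q})=(r-1)a+\varepsilon$ and uses the a priori bound $0\le\varepsilon\le1$. It then reads off $\varepsilon=2t+O(1/a)$ from the leading terms of $\det\bigl(Q-((r-1)a+\varepsilon)I\bigr)$.

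You instead sandwich $\lambda$. The all-ones Rayleigh quotient gives $\lambda\ge(r-1)a+2q/n$. Splitting $A(Y_{n,r,q})=A(T_{n,r})+A(\mathcal M)$ at the Perron vector gives $\lambda\le(r-1)a+2qy^2$. For this upper bound, state explicitly where $y^2\le(1+O(1/n))/n$ comes from. It follows from the quotient relations $y=\frac{(r-1)a}{\lambda-1}w$ and $z=\frac{(r-1)a}{\lambda}w$, but only after you insert $(r-1)a\le\lambda\le(r-1)a+1$ (your Step~1 and your crude bound). That makes $y,z,w$ agree up to factors $1+O(1/n)$.

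Your sandwich avoids the determinant expansion. It is the same mechanism as the paper's gap-to-class-edges conversion (Claim~\ref{clm:sharp-class-edges}), so it shows why the matching is asymptotically extremal: both the averaging lower bound and the near-uniform upper bound are tight for it. The paper's cubic, in exchange, pins down $\varepsilon$ in one computation with an explicit $O(1/a)$ error.

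Two smaller remarks.

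First, the ``in particular'' inequality, which is all that Section~\ref{sec:sharpness} uses, already follows from your Step~1 alone. Since $\sqrt{A^2+2B}\le A+B/A$, you get $\tau_r(m)\le(r-1)a+q/n$. Hence $\lambda-\tau_r(m)\ge q/n\ge t-1/n>C$ for large $n$.

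Second, do not write up the cubic sketch as it stands. Because $q/a\to rt$ is a fixed constant, ``expanding in the small parameter $q/a$'' would leave $O(t^2)$ terms, which are incompatible with $t+o(1)$. The ``$(\dots)$'' also hides the whole computation. The correct small parameter is $1/a$. Eliminating $y$ and $z$ gives $ra\varepsilon+\varepsilon^2=\frac{2q(r-1)a}{(r-1)a+\varepsilon-1}$. With $\varepsilon\in[0,1]$, this yields $\varepsilon=2q/n+O(1/a)=2t+O(1/a)$, with no $t^2$ correction.
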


\begin{proof}
Partition the vertices into the $2q$ matched vertices of $V_1$, the $a-2q$ unmatched vertices of $V_1$, and $V_2\cup\cdots\cup V_r$.  This partition is equitable, with quotient matrix
\[
  Q=
  \begin{pmatrix}
    1 & 0 & (r-1)a \\
    0 & 0 & (r-1)a \\
    2q & a-2q & (r-2)a
  \end{pmatrix},
\]
and $Y_{n,r,q}$ is connected, so $\lambda(Y_{n,r,q})$ equals the largest eigenvalue of $Q$.  Write $\lambda(Y_{n,r,q})=(r-1)a+\varepsilon$ for some real $\varepsilon$ determined later. 
Since $Y_{n,r,q}$ contains $T_{n,r}$ as a subgraph and 
 the maximum degree of $Y_{n,r,q}$ is $(r-1)a+1$, we have $0\le\varepsilon\le1$.  Expanding the characteristic polynomial,
\[
  0=\det\!\big(Q-((r-1)a+\varepsilon)I\big)
  =2a(r-1)q-a^2r(r-1)\varepsilon+a\big(r\varepsilon-(2r-1)\varepsilon^2\big)+\varepsilon^2-\varepsilon^3.
\]
With $q=\lfloor tn\rfloor=tra+O(1)$ and $0\le\varepsilon\le1$, this reads $0=a^2r(r-1)(2t-\varepsilon)+O_r(a)$, so $\varepsilon=2t+O_r(a^{-1})=2t+o(1)$.  On the other hand $e(Y_{n,r,q})=e(T_{n,r})+q=\tfrac{r-1}{2r}n^2+tn+O(1)$, whence
\[
  \tau_r(e(Y_{n,r,q}))
  =(1-\tfrac1r)n\sqrt{1+\tfrac{2t}{(1-1/r)n}+O(n^{-2})}
  =(1-\tfrac1r)n+t+o(1)=(r-1)a+t+o(1).
\]
Subtracting gives 
$$\lambda(Y_{n,r,q})-\tau_r(e(Y_{n,r,q}))=t+o(1).$$  
Since $t=(1+\xi)C>C$, 
 the final assertion $\lambda (Y_{n,r,q}) \ge \tau_r(m) +C$ follows.
\end{proof}

\begin{claim} \label{clm:matching-count}
As $n\to\infty$ with $t$ fixed and small,  we have 
$$ N_F (Y_{n,r,q})=(1+o(1))\,q\,c(n,F)=(1+o(1))\alpha_F t n^{f-1}.$$
\end{claim}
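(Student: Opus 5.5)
We need to count copies of $F$ in $Y_{n,r,q}$, which is $K_{a,\dots,a}$ (with $n=ra$) plus a matching $M$ of size $q=\lfloor tn\rfloor$ inside $V_1$. The plan is to sort copies by how many matching edges they use, show that copies using one matching edge give $(1+o(1))\,q\,c(n,F)$, and show that everything else is negligible.

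\textbf{Step 1: every copy uses a matching edge.} Since $K_{a,\dots,a}$ is $r$-partite and $\chi(F)=r+1$, every copy of $F$ uses at least one edge of $M$. Let $N_1$ be the number of copies using exactly one edge of $M$, and $N_{\ge2}$ the number using at least two. Then
\[
N_F(Y_{n,r,q}) = N_1 + N_{\ge 2}.
\]

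\textbf{Step 2: the one-edge term.} Fix $e\in M$. Consider the copies of $F$ in $T_{n,r}+e$; each of them uses $e$, and there are exactly $c(n,F)$ of them. By symmetry of the balanced Tur\'an graph, this number is the same for every internal edge, so the minimum in the definition of $c(n,F)$ is attained by every such edge. Every such copy is also a copy in $Y_{n,r,q}$, since $T_{n,r}+e\subseteq Y_{n,r,q}$. Hence
\[
\sum_{e\in M} \#\{\text{copies in } T_{n,r}+e\} = q\,c(n,F).
\]
This sum counts each copy of $F$ in $Y_{n,r,q}$ once for every matching edge it uses. Therefore
\[
N_1 \le q\,c(n,F) \le N_F(Y_{n,r,q}).
\]

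\textbf{Step 3: the multi-edge term.} I bound $N_{\ge2}$ crudely as labeled injections, then divide by $|\Aut(F)|$. A copy using two matching edges $e\ne e'$ is determined by:
\begin{itemize}
\item the choice of $e$ and $e'$: at most $q^2=O(t^2n^2)$ ways;
\item the assignment of four vertices of $F$ to the four endpoints: $O_F(1)$ ways;
\item the images of the remaining $f-4$ vertices: $O(n^{f-4})$ ways.
\end{itemize}
This gives $N_{\ge2}=O_F(t^2n^{f-2})$. By Lemma~\ref{lem:Mub2010-c}, $q\,c(n,F)=\Theta(tn\cdot n^{f-2})$. So
\[
N_{\ge2}=O(n^{-1})\,q\,c(n,F)=o\bigl(q\,c(n,F)\bigr).
\]
This requires $f\ge4$, which holds since $f\ge r+1\ge4$. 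Since $N_F(Y_{n,r,q})=N_1+N_{\ge2}\le q\,c(n,F)+N_{\ge2}$, combining with Step 2 gives $N_F(Y_{n,r,q})=(1+o(1))\,q\,c(n,F)$.

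\textbf{Step 4: substitution.} Using $q=tn+O(1)$ and $c(n,F)=\alpha_Fn^{f-2}(1+o(1))$ from Lemma~\ref{lem:Mub2010-c}, we get
\[
N_F(Y_{n,r,q})=(1+o(1))\,\alpha_F\,t\,n^{f-1}.
\]

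The only point needing care is Step 2's equality $c(n,F)=$ the count for each internal edge of $T_{n,r}$, together with the sandwich $N_1\le q\,c(n,F)\le N_F$. Everything else is routine counting, so I expect no real obstacle.
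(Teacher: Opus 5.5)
Your decomposition is the same as the paper's. You split copies by the number of matching edges in their edge image and show that the one-edge copies contribute $q\,c(n,F)$. You then bound copies with two or more matching edges by $O_F(q^2n^{f-4})$. Your explicit remark that, for $n=ra$, every internal edge of $T_{n,r}$ yields exactly $c(n,F)$ copies is a point the paper uses tacitly in its upper bound. Steps 1, 3 and 4 are fine.

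The justification of the lower bound in Step 2 is wrong as written. The sentence ``this sum counts each copy of $F$ in $Y_{n,r,q}$ once for every matching edge it uses'' is false. A copy whose edge image contains two matching edges $e\neq e'$ is not a copy in $T_{n,r}+e$, because it contains $e'$, and likewise it is not a copy in $T_{n,r}+e'$. So it is not counted at all. Moreover, if your sentence were true, it would give
\[
q\,c(n,F)=\sum_{\text{copies}}(\text{number of matching edges used})\ge N_F(Y_{n,r,q}).
\]
That is the \emph{reverse} of the inequality $q\,c(n,F)\le N_F(Y_{n,r,q})$ you then assert, so your stated reason does not support your lower bound.

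What you need is the disjointness observation the paper uses. A copy in $T_{n,r}+e$ contains $e$ and no other matching edge. Hence the families for distinct $e\in M$ are pairwise disjoint and consist precisely of the copies with exactly one matching edge. Therefore $\sum_{e\in M}\#\{\text{copies in }T_{n,r}+e\}=N_1$ exactly, i.e.\ $N_1=q\,c(n,F)$. Then $N_F(Y_{n,r,q})=q\,c(n,F)+N_{\ge2}$, and your Step 3 bound finishes the proof. With this one-line repair your argument coincides with the paper's.
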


\begin{proof}
Let $\mathcal M$ be the set of $q$ matching edges, and call $e\in\mathcal M$ \emph{active} for an embedding $\phi\colon F\hookrightarrow Y_{n,r,q}$ if $e$ is the image of an edge of $F$.  As $T_{n,r}$ is $F$-free, every embedding has at least one active edge.

If an embedding has exactly one active edge $e\in\mathcal M$, then all other edges of $F$ map to cross-edges of $T_{n,r}$, so the embedding already lies in $T_{n,r}+e$ (extra matching edges spanned by the image are irrelevant, as copies are non-induced); hence there are at most $q\,c(n,F)$ such copies.  For embeddings with at least two active edges, pick two of them in $O(q^2)$ ways; since $\mathcal M$ is a matching these have four distinct endpoints, and after fixing the corresponding two edges of $F$ ($O_F(1)$ choices) the remaining vertices admit $O_F(n^{f-4})$ choices, giving $O_F(q^2n^{f-4})=O_F(t^2n^{f-2})=o(tn^{f-1})$ copies.  With $q=tn+O(1)$ and $c(n,F)=\alpha_Fn^{f-2}+O_F(n^{f-3})$, this gives the upper bound 
$$ N_F (Y_{n,r,q})\le q\,c(n,F)+O_F(q^2n^{f-4})=(1+o(1))\alpha_F t n^{f-1}.$$

For the lower bound, every copy of $F$ in $T_{n,r}+e$ has $e$ as its unique active edge; viewed inside $Y_{n,r,q}$ the edge $e$ stays active and no other matching edge becomes active, since the edge images are fixed by the embedding.  Thus the copies from distinct $e\in\mathcal M$ are distinct and all survive in $Y_{n,r,q}$, so $ N_F (Y_{n,r,q})\ge q\,c(n,F)$.  Combining the two bounds proves the claim.
\end{proof}

Since $m\!=\!\tfrac{r-1}{2r}n^2+O_C(n)$, 
we get $n^{f-1}\!=\! (1\!+\! o(1)) (\tfrac{2mr}{r-1} )^{(f-1)/2}$. 
 Claim~\ref{clm:matching-count} gives 
\[
   N_F (Y_{n,r,q})
  \le(1+o(1))\alpha_F t\Big(\tfrac{2r}{r-1}\Big)^{(f-1)/2}m^{(f-1)/2}
  =(1+o(1))(1+\xi)\Ksharp\,C m^{(f-1)/2}.
\]
Given $\eta>0$, choose $\xi$ with $(1+\xi)(1+\eta/3)\le1+\eta$, then $C_1$ small enough that, for all $0<C\le C_1$, the value $t=(1+\xi)C$ lies in the small range required by the claims and satisfies $tr<1/2$, and finally $n$ large enough that the factor $1+o(1)$ above is at most $1+\eta/3$.  This gives $ N_F (Y_{n,r,q})\le(1+\eta)\Ksharp\,C m^{(f-1)/2}$, while Claim~\ref{clm:matching-gap} gives $\lambda(Y_{n,r,q})\ge\tau_r(m)+C$.  This proves the sharpness in Theorem~\ref{thm:linear-small-gap}, and combined with the lower bound it also proves the limit identity~\eqref{eq:exact-limit}.

\begin{remark}
\label{rem:fixed-gap-caveat}
The coefficient of Theorem~\ref{thm:linear-small-gap} is a linear term as $C\to 0^+$.
 For a fixed positive gap, the sharp coefficient need not be linear in $C$.  
 Let $T_{n,r,q}$ be the graph obtained by adding a star with $q:=tn$ edges inside one part. 
 A similar calculation gives $\lambda (T_{n,r,q}) -\tau_r(m)=t+\tfrac{r}{r-1}t^2+o(1)$. Solving $\tfrac{r}{r-1}t^2 +t =C$ for $t$, the internal-edge density required to produce a fixed gap $C$ satisfies  
\[
  t=t_r(C)=\tfrac{r-1}{2r}\Big(\sqrt{1+\tfrac{4rC}{r-1}}-1\Big)
  =C-\tfrac{r}{r-1}C^2+O_r(C^3),
\]
which is nonlinear in $C$ and strictly less than the value $C$. 
Consequently, $N_F(T_{n,r,q}) = (1+o(1))tn\cdot c(n,F)$, which is nonlinear in $C$. Thus, determining the  coefficient for every fixed $C>0$ amounts to a finer nonlinear extremal problem for the added internal graph.
\end{remark}

\section{Concluding remarks}

\label{sec:concluding} 

We summarize three distinct lines of supersaturation results. In the classical line, one counts copies of $F$ once the size exceeds the
Tur\'an number, from Erd\H{o}s for triangles to Mubayi~\cite{Mub2010} for all color-critical graphs. In
the vertex-spectral line, one counts copies once $\lambda(G)$ exceeds
the threshold $(1- \frac{1}{r})n$; here Bollob\'as and Nikiforov~\cite{BN2007jctb}
counted cliques, and Fang, Li, Lin and Ma~\cite{FLLM2025} proved a vertex-spectral version of Mubayi's theorem. 
The edge-spectral line has  been developed by 
Fang, Lin and Zhai~\cite{FLZ} in the threshold case $\lambda^2(G) > (1- \frac{1}{r})2m$. 
The present paper settles the small-gap regime of the edge-spectral line under an additive gap $\lambda^2(G) > (1- \frac{1}{r})2m +q$ for every $0< q\le \delta_F \sqrt{m}$. This solves a conjecture of Fang, Lin and Zhai, and  gives an edge-spectral version of Mubayi's theorem. 

\smallskip 
We close with some related problems and possible directions for future work. 

\paragraph{The pointwise constant for a fixed gap.}
For a fixed real number $C>0$, we define 
\[
  g_F(C):=\liminf_{m\to\infty}\ \frac{1}{m^{(f-1)/2}}
  \min\bigl\{\,N_F(G): e(G)=m,\ \lambda(G)\ge \sqrt{(1\!-\!1/r)2m}+C\,\bigr\}.
\]
Theorem~\ref{thm:linear-small-gap} says that $g_F(C)=\kappa_F\,C\,(1+o_C(1))$ as
$C\to 0^+$, so the matching-added construction $Y_{n,r,q}$ is optimal to first order; see Section \ref{sec:sharpness}. 
However, for a fixed large $C>0$,
 adding a matching is no longer optimal: by Remark~\ref{rem:fixed-gap-caveat}, the construction $T_{n,r,q}$, 
adding a star with $t n$ edges inside one part,  lifts the spectral radius by
$t+\frac{r}{r-1}t^2+o(1)$, so the same gap $C$ is reached with
$t=t_r(C)=C-\frac{r}{r-1}C^2+O_r(C^3)<C$ internal edges, and hence with strictly
fewer copies. Thus, we have 
$ (1-\eta(C))\,\kappa_F\,C \le g_F(C) \le\kappa_F\,t_r(C) < \kappa_F\,C$ for $0<C\le C_0$, 
and the exact value is open.

\begin{problem}\label{prob:pointwise}
Determine $g_F(C)$ for every real number $C>0$. Equivalently, find the graph that minimizes the
number of copies of $F$ among $m$-edge graphs with $\lambda(G)\ge \sqrt{(1\!-\!1/r)2m}+C$.
\end{problem}

We expect the extremal graph to be a Tur\'an graph $T_{n,r}$ together with
an optimal ``internal'' graph $D$ added inside one part. Since each internal edge
creates about $c(n,F)$ copies, while the spectral lift per edge grows as $D$ becomes
more concentrated, Problem~\ref{prob:pointwise} reduces to a clean extremal question:
for a target lift $C$, which $D$ uses the fewest edges (and the fewest extra copies
coming from two internal edges)? It is natural to ask whether the optimal $D$ is a
clique. We note the following contrast: for $r=2$ and $F=K_3$, Chen, Li and Tang~\cite{CLT2026} showed that the analogous count is \emph{exactly} linear in the
gap, with split graphs extremal. The nonlinear correction above is therefore a new
feature of the case $r\ge 3$. It would be interesting to decide for which $F$ the
function $g_F$ is linear.

\paragraph{Further directions.} 
 When $\chi(F)=3$, the extremal graphs for
the edge-spectral Tur\'an problem are no longer balanced complete bipartite graphs but
are often split graphs, as shown in~\cite{ZLS2021,LZS2024, LZZ2024, LLLY2026}.
The edge-spectral supersaturations for triangles and books were studied in~\cite{NZ2021, ZLL2026, ZZ2026, CLT2026}, but the general case is open: 
establish a sharp edge-spectral supersaturation theorem for color-critical graphs $F$
with $\chi(F)=3$, in particular for the odd cycles $C_{2k+1}$ with $k\ge 2$. Here, the
candidate extremal graphs are split graphs rather than Tur\'an graphs, so a different
stability analysis is needed.

Four further directions seem worthwhile. First, beyond color-critical graphs the minimum
supersaturation configurations are more involved already in the combinatorial setting
(Ma and Yuan~\cite{MY2025}); finding the edge-spectral count for a general graph $F$
just above its spectral threshold is open. Second, for some bipartite $F$, the threshold is
$\lambda(G)\approx\sqrt m + O(1)$, and sharp counts are tied to spectral Sidorenko-type
inequalities \cite{LLLZ2026}; 
only the threshold case above the split graphs is known, and the general case with an additive spectral gap remains unknown. Third, one may replace the adjacency spectral radius by the
$p$-spectral radius \cite{KN2014,KNY2015} or 
 the signless Laplacian spectral radius \cite{ZLL2026LAA, ZLF2026} and ask for the
corresponding sharp supersaturation. 
Finally, removing the $o(1)$ error and obtaining an \emph{exact} count, in the spirit
of the work of Lov\'asz--Simonovits and Liu--Pikhurko--Staden, would already be of interest for the clique $K_{r+1}$,
where the constant $\alpha_F=(1/r)^{r-1}$ is explicit.

\end{document}